\theoremstyle{thmstyleone}
\newtheorem{theorem}{Theorem}[section]
\newtheorem{proposition}[theorem]{Proposition}
\newtheorem{lemma}[theorem]{Lemma}
\newtheorem{corollary}[theorem]{Corollary}
\theoremstyle{definition}
\newtheorem{definition}[theorem]{Definition}
\theoremstyle{remark}
\newtheorem{remark}[theorem]{Remark}
\begin{document}

\title{Mode-Wise Spectral Criteria for Coupled Mass Transport in Hybrid PDE–ODE Tumor Microenvironments}

\author[1]{\fnm{Jiguang} \sur{Yu}}\email{jyu678@bu.edu}
\equalcont{These authors contributed equally to this work as co-first authors.}

\author*[2,3]{\fnm{Louis Shuo} \sur{Wang}}\email{swang116@vols.utk.edu}
\equalcont{These authors contributed equally to this work as co-first authors.}

\author[4]{\fnm{Zonghao} \sur{Liu}} \email{liuzonghao@fjmu.edu.cn}

\author*[4]{\fnm{Jingfeng} \sur{Liu}}\email{drjingfeng@126.com}

\affil[1]{\orgdiv{College of Engineering},
  \orgname{Boston University},
  \orgaddress{\city{Boston}, \postcode{02215}, \state{MA}, \country{United States}}}

\affil[2]{\orgdiv{Department of Integrated Systems Engineering},
  \orgname{Ohio State University},
  \orgaddress{\city{Columbus}, \postcode{43210}, \state{OH}, \country{United States}}}

\affil[3]{\orgdiv{Department of Mathematics},
  \orgname{University of Tennessee},
  \orgaddress{\city{Knoxville}, \postcode{37996}, \state{TN}, \country{United States}}}

\affil[4]{%
  \orgdiv{Department of Hepatopancreatobiliary Surgery},
  \orgname{Fujian Cancer Hospital},
  \orgaddress{\city{Fuzhou}, \postcode{350014}, \state{Fujian},\country{China}}}

\abstract{

We study coupled mass transport in a tumor--microenvironment setting with two motile densities $(S,R)$ and non-motile state switching $(P,A)$. The populations diffuse and undergo chemotactic drift; $(P,A)$ follow pointwise ODE switching. A decoupled inhibitory field $D$ satisfies a damped Neumann heat equation, giving maximum-principle bounds and exponential decay. Together with the pointwise invariant $P+A$, these identities yield global existence, positivity, and long-time reduction to limiting $(S,R)$ kinetics with a unique globally attracting coexistence state. Neumann eigenmode reduction gives closed dispersion relations. The base $(S,R)$ reaction--diffusion block remains stable for all nonconstant modes for any $d_S,d_R>0$, excluding classical Turing destabilization. Chemotaxis is posed via a diffusive cue $c$, since $\nabla A$ is undefined for non-diffusive $A$. In one-way damped coupling, the linearized mode matrix is block triangular and leaves the $(S,R)$ spectrum unchanged. Two-way coupling adds a feedback rank-one mobility correction, induces effective cross-diffusion, and admits mode growth. We give explicit trace/determinant criteria for unstable Laplacian modes and the resulting instability thresholds.

} 

\keywords{
hybrid PDE–ODE model; reaction–diffusion–chemotaxis system; coupled transport; Turing-type instability; tumor microenvironment
}

\maketitle

\section{Introduction}
\label{sec:introduction}

Let $U\subset\mathbb{R}^N$ ($N\in\{1,2,3\}$) be a bounded domain with smooth boundary.
Consider the semilinear reaction--diffusion system
\begin{equation}
\label{eq:intro_RD_general}
\partial_t \mathbf{u}=D\Delta \mathbf{u}+F(\mathbf{u}),\qquad 
\mathbf{u}:\,U\times(0,\infty)\to\mathbb{R}^m,
\end{equation}
where $D=\mathrm{diag}(d_1,\dots,d_m)$ with $d_i> 0$ and $F:\mathbb{R}^m\to\mathbb{R}^m$ is $C^1$.
A spatially homogeneous equilibrium $\mathbf{u}^*$ satisfies $F(\mathbf{u}^*)=0$ and is linearly stable if and only if
\begin{equation}
\label{eq:intro_kinetic_stability}
\operatorname{Spec}\bigl(DF(\mathbf{u}^*)\bigr)\subset\{\Re z<0\},
\end{equation}
where $DF(\mathbf{u}^*)$ denotes the Jacobian at $\mathbf{u}^*$.
Under homogeneous Neumann boundary conditions, let $\{(\lambda_k,\omega_k)\}_{k\ge1}$ be the eigenpairs of the Neumann Laplacian,
\[
-\Delta\omega_k=\lambda_k\omega_k \ \text{in }U,\qquad 
\partial_{\mathrm n}\omega_k=0 \ \text{on }\partial U,
\qquad 0=\lambda_1<\lambda_2\le\cdots,\ \lambda_k\to\infty.
\]
Projecting the linearization at $\mathbf{u}^*$ onto a single eigenmode yields the mode matrix
\begin{equation}
\label{eq:intro_mode_matrix}
\mathcal{A}_k := DF(\mathbf{u}^*)-\lambda_k D,\qquad k\ge 1.
\end{equation}
A diffusion-driven (Turing) instability occurs if \eqref{eq:intro_kinetic_stability} holds while at least one nonconstant mode becomes unstable, i.e.
\begin{equation}
\label{eq:intro_turing_condition}
\exists\,k\ge 2:\ \operatorname{Spec}(\mathcal{A}_k)\cap\{\Re z>0\}\neq\emptyset.
\end{equation}
This mode-wise mechanism underlies self-organized pattern formation in a wide range of biological and physical settings, where the interplay between local reactions and environmental signaling plays key roles \citep{cross_pattern_1993,dolnik_resonant_2001,colizza_reactiondiffusion_2007,belik_natural_2011}.
In the classical two-species linearization formulated in Turing's prototype \citep{turing_chemical_1952,cherniha_nonlinear_2017,kondo_reaction-diffusion_2010},
\begin{align}
\label{eq:intro_turing}
\begin{cases}
\partial_{t}X = \dfrac{\mu'}{\rho^2}\partial_{\theta\theta}X + a(X-h)+b(Y-k), \\[4pt]
\partial_{t}Y = \dfrac{\nu'}{\rho^2}\partial_{\theta\theta}Y + c(X-h)+d(Y-k),
\end{cases}
\end{align}
the instability reduces to algebraic sign conditions on the Jacobian and diffusion coefficients; 
extensive refinements address
geometry, boundary spectra, and robustness \citep{marcon_turing_2012,chen_non-linear_2019,maini_turings_2012,ninomiya_example_2024,gierer_theory_1972,kondo_reaction-diffusion_2010,mimura_diffusive_1978}.
Stochastic perturbations enlarge the instability region \citep{cao_stochastic_2014}, and network/generalized couplings embed multiscale feedback
\citep{regueira_lopez_de_garayo_network_2025}. Yet, many realistic biological patterns require additional structure (non-motile components, switching, lineage effects),
e.g.\ zebrafish skin patterning \citep{kondo_studies_2021,frohnhofer_iridophores_2013,mahalwar_local_2014}.

Many biological patterns, however, are not initiated by purely diffusive instabilities: motile populations can bias their movement along weak spatial cues, thereby amplifying heterogeneity through directed transport.
The directed movement substantially shapes subsequent morphogenesis.
Cell motility therefore plays a central role in phenomena such as aggregation of the social amoeba \textit{Dictyostelium discoideum} \citep{hofer_dictyostelium_1995}, stripe formation in fish pigmentation \citep{painter_stripe_1999}, chick gastrulation and limb morphogenesis \citep{li_cell_1999,yang_cell_2002}, various bacterial aggregation behaviors \citep{budrene_complex_1991,budrene_dynamics_1995}, and primitive streak formation \citep{painter_chemotactic_2000}. 
A canonical continuum description is provided by Keller--Segel-type chemotaxis systems \citep{keller_initiation_1970,keller_model_1971}. For a population density $n(x,t)$ and a chemoattractant concentration $c(x,t)$ one considers
\begin{equation}
\label{eq:intro_KS}
\begin{cases}
\partial_{t} n = D_n\Delta n - \nabla \cdot \bigl( n \chi(c)\nabla c \bigr) + f(n,c),\\[2pt]
\partial_{t} c = D_c\Delta c + g(n,c),
\end{cases}
\end{equation}
where the flux $-\nabla\cdot(n\chi(c)\nabla c)$ models biased migration up gradients of $c$.
Depending on dimension, sensitivity, and kinetics, such transport can yield aggregation, patterning, or even finite-time blow-up \citep{murray_mathematical_2002,jager_explosions_1992,horstmann_boundedness_2005,jin_critical_2026,horstmann_1970_2003,arumugam_keller-segel_2021}.
Further refinements enhance mechanism realism \citep{martiel_model_1987,goldbeter_biochemical_1996,monk_cyclic_1989,spiro_model_1997}.
We emphasize that the present work focuses on linear, mode-wise instability of homogeneous equilibria under Neumann boundary conditions (Turing-type mechanisms) rather than blow-up phenomena.

In our hybrid tumor-microenvironment setting the activated microenvironmental state $A(x,t)$ is non-diffusive (a pointwise ODE variable), so its spatial gradient is not, in general, well-defined.
To formulate chemotactic drift rigorously we therefore introduce a diffusive chemoattractant $c(x,t)$ generated by $A$, for instance
\begin{equation}
\label{eq:intro_c_from_A}
\partial_t c = d_c\Delta c + \kappa A - \rho c,
\qquad \partial_{\mathrm n}c=0 \ \text{on }\partial U,
\end{equation}
with $d_c>0$, $\kappa>0$, and $\rho>0$.
In fast-relaxation regimes one may interpret $c\approx (\kappa/\rho)A$ as a quasi-steady approximation, but \eqref{eq:intro_c_from_A} is the standing formulation that guarantees $\nabla c$ is meaningful and keeps the chemotaxis terms well-posed.

Now we consider tumor microenvironment motivation. Spatial structure in tumor microenvironments (TMEs) is shaped by more than diffusion alone \citep{bubba_chemotaxis-based_2019,watts_pdgf-aa_2016,puliafito_three-dimensional_2015,oudin_physical_2016,esfahani_three-dimensional_2022}.
In addition to random dispersal and directed migration along biochemical cues (chemotaxis/haptotaxis) \citep{aznavoorian_signal_1990,taraboletti_thrombospondin-induced_1987}, TMEs exhibit
non-motile state switching of microenvironmental constituents (e.g.\ quiescent/activated stromal states) \citep{arina_tumor-associated_2016,kojima_autocrine_2010,anderberg_paracrine_2009,turlej_cross-talk_2025},
signal/drug modulation on intermediate time scales \citep{issa_dynamics_2025,yu_tumor_2025}, and remodeling of the extracellular matrix (ECM) \citep{henke_extracellular_2020,huang_extracellular_2021,hu_extracellular_2025}
together with mechanical constraints \citep{luu_mechanical_2024} that modulate transport and effective growth.
These features generate spatial heterogeneity through coupled feedback loops: signals and stromal activation reshape the local habitat,
while tumor populations respond by proliferation, phenotypic conversion, and directed movement.
Recent hybrid multiscale studies further highlight that spatial vascular/perfusion heterogeneity can emerge from bidirectional chemotactic feedback and, in turn, create hypoxic regions and drug-resistance niches; see, e.g., \citep{liu_bidirectional_2025} for a PDE--agent-based framework where endothelial--tumor angiogenic factor two-way coupling yields an explicit finite-domain Turing threshold and heterogeneous drug penetration.
From a modeling perspective, this naturally leads to systems in which (i) motile populations are governed by diffusion and transport,
(ii) microenvironmental variables include non-diffusive internal states with local transitions, and (iii) inhibitory or chemoattractive
signals relax rapidly via diffusion and decay.

Motivated by these considerations, we adopt a hybrid PDE--ODE framework: motile populations evolve by parabolic PDEs,
whereas microenvironmental state variables evolve by pointwise (in space) ODEs that encode phenotypic switching.
To formulate chemotactic drift rigorously while keeping the microenvironmental state $A$ non-diffusive, we introduce a diffusive chemoattractant $c(x,t)$ generated by $A$ (and, in feedback settings, possibly by the populations), so that $\nabla c$ is well-defined.
This setup allows us to isolate a directionality principle: when signaling is one-way and damped, transport does not alter the population spectrum,
whereas bidirectional feedback can induce effective cross-diffusion and recover Turing-type instabilities.

\begin{figure}[htbp]
    \centering
    \includegraphics[width=\textwidth]{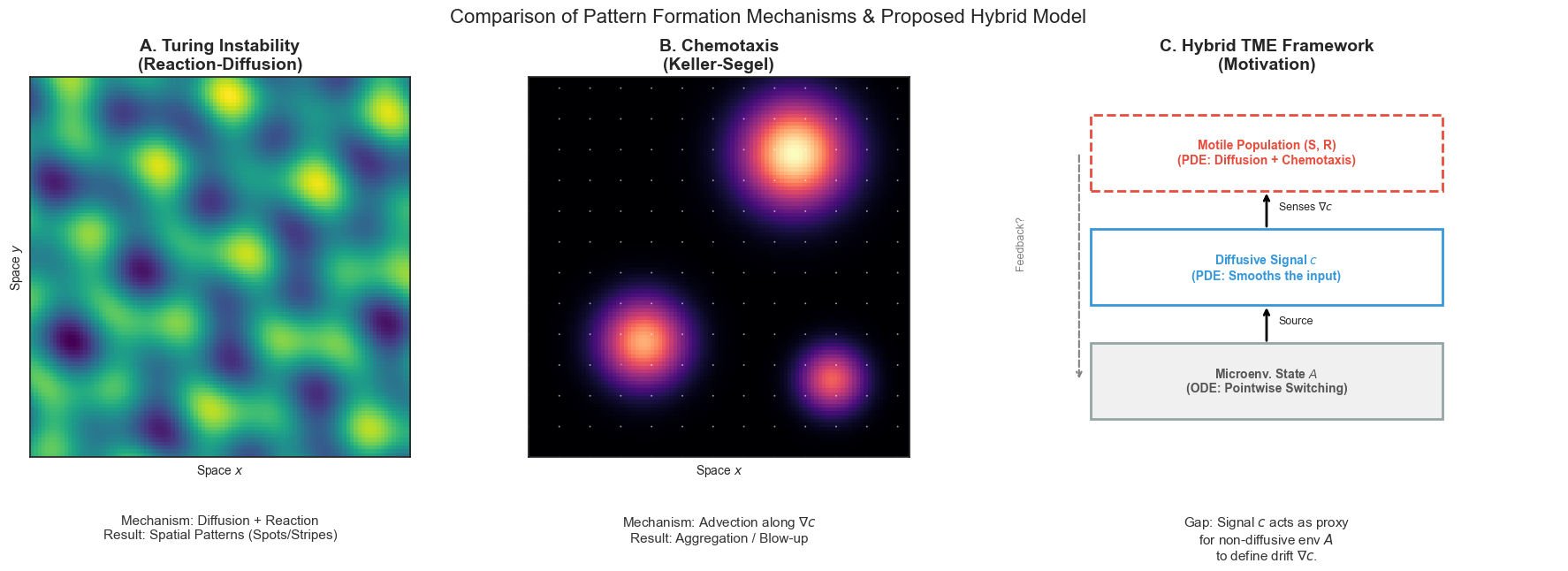} 
    \caption{\textbf{Comparison of pattern formation mechanisms and the proposed hybrid framework.} 
    \textbf{(A)} Classical diffusion-driven (Turing) instability generates spatial patterns from homogeneity (Eq.~\ref{eq:intro_RD_general}). 
    \textbf{(B)} Chemotaxis (Keller--Segel) drives directed transport leading to aggregation or blow-up (Eq.~\ref{eq:intro_KS}). 
    \textbf{(C)} The proposed hybrid TME framework couples motile populations (PDE) to non-motile microenvironmental states ($A$, ODE) via a diffusive signal ($c$) to rigorously define chemotactic drift.}
    \label{fig:mechanisms}
\end{figure}

Despite extensive progress on diffusion-driven (Turing) mechanisms and on Keller--Segel-type chemotactic drift \citep{landge_pattern_2020,arumugam_keller-segel_2021}, a unified mode-wise stability theory that combines reaction--diffusion dispersion relations with chemotaxis-induced transport within hybrid PDE--ODE frameworks featuring non-diffusive microenvironmental state transitions remains limited.
In particular, when motile populations are coupled to non-motile microenvironmental states through fast diffusive signals,
it is not a priori clear how classical Turing criteria interact with directional transport and with the presence (or absence) of feedback. In addition, recent hybrid PDE--ABM studies establish well-posedness and derive continuum limits for multiscale feedback models \citep{wang_analysis_2025}, a unified mode-wise stability theory that transfers reaction--diffusion (Turing) criteria to chemotaxis-driven drift in hybrid PDE--ODE settings remains less developed.

We address this gap by introducing a diffusive chemoattractant $c(x,t)$ (mathematically necessary to ensure $\nabla c$ is well-defined),
generated by microenvironmental activation and, in feedback settings, by the populations, and by organizing the analysis according to a directionality dichotomy: one-way damped coupling, in which $c$ evolves independently of the populations, versus two-way coupling, in which feedback from $(S,R)$ to $c$ induces effective cross-diffusion (in an appropriate fast-relaxation closure)
and can restore Turing-type instabilities.

Throughout, pattern formation refers to linear, mode-wise instability of a spatially homogeneous equilibrium under homogeneous Neumann boundary conditions (a Turing-type mechanism).
We explicitly distinguish this notion from chemotaxis-driven aggregation and blow-up phenomena in Keller--Segel systems, which are not the focus of the present work. Figure~\ref{fig:mechanisms} compares diffusion-driven instability and chemotaxis-driven aggregation.

Our analysis connects a hybrid PDE--ODE reaction--diffusion core model with chemotaxis-driven transport through a directionality-based stability framework (Figure~\ref{fig:stability_dichotomy}). The main results are:
\begin{enumerate}[label=(C\arabic*), leftmargin=*, itemsep=2pt]
\item \textbf{Well-posedness and positivity.}
We prove invariance of the positive cone and global well-posedness of mild solutions for the hybrid PDE--ODE system under standard Neumann compatibility assumptions.

\item \textbf{Long-time reduction and global convergence of kinetics.}
Using exponential decay of the inhibitory signal and pointwise conservation in the non-motile subsystem, we derive a long-time reduction to the limiting $(S,R)$ kinetics and establish global attractivity of the unique coexistence equilibrium.

\item \textbf{No diffusion-driven instability in the base reaction--diffusion subsystem.}
We show that the coexistence equilibrium remains linearly asymptotically stable with respect to all nonconstant Laplacian modes for arbitrary positive diffusion coefficients; hence no classical Turing instability occurs in the base reaction--diffusion dynamics.

\item \textbf{One-way damped chemotaxis suppresses patterns.}
Introducing a diffusive chemoattractant $c(x,t)$ to formulate drift rigorously, we prove that unidirectional, damped coupling (no feedback from the populations to $c$) yields a block-triangular linearization and therefore cannot destabilize the population spectrum; diffusion/chemotaxis-driven patterns are suppressed.

\item \textbf{Two-way feedback recovers instability via explicit mode-wise criteria.}
For bidirectional coupling, we derive explicit trace/determinant conditions characterizing when feedback generates effective cross-diffusion (in a fast-relaxation closure) and restores Turing-type instabilities for some nonconstant Laplacian mode.
\end{enumerate}

\begin{figure}[htbp]
    \centering
    \includegraphics[width=\textwidth]{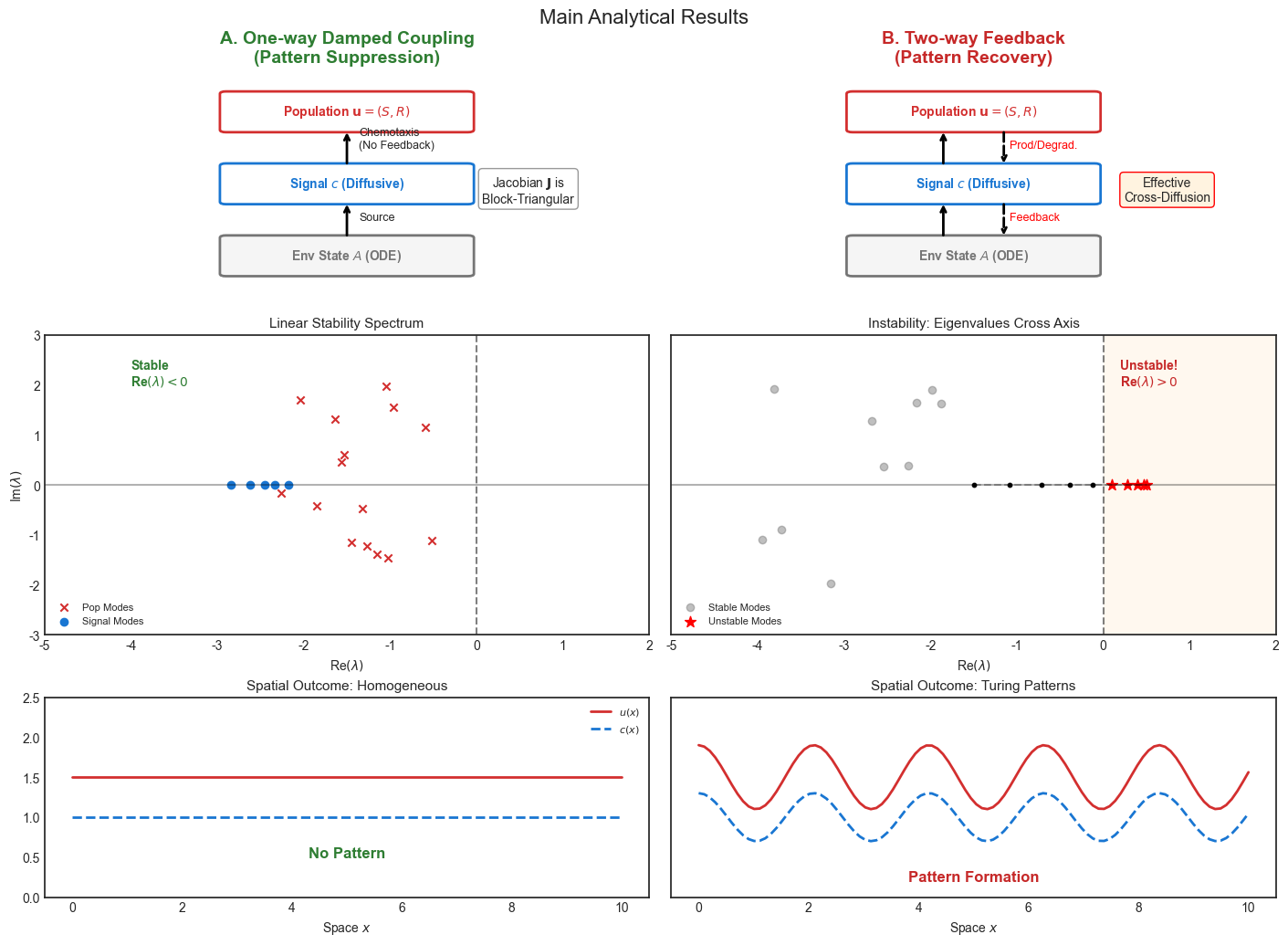} 
    \caption{\textbf{Directionality dichotomy in stability results (Main Analytical Contributions).} 
    \textbf{(Left)} Under one-way damped coupling, the Jacobian is block-triangular, maintaining the stability of the population spectrum (all eigenvalues $\Re \lambda < 0$). Spatially, the system remains homogeneous.
    \textbf{(Right)} Under two-way feedback, the interaction generates effective cross-diffusion, causing eigenvalues to cross the imaginary axis ($\Re \lambda > 0$). This leads to the recovery of spatial Turing patterns.}
    \label{fig:stability_dichotomy}
\end{figure}

The paper is organized as follows. Section~\ref{sec:model_prelim} introduces the hybrid PDE--ODE model, fixes standing assumptions and notation, and records key structural identities (positivity, pointwise conservation, and decay of the inhibitory signal). Section~\ref{sec:main_results} states the main analytical results on well-posedness, long-time reduction, global convergence of the coexistence equilibrium, and the absence of diffusion-driven instability in the base reaction--diffusion subsystem.
In this section, we also introduce chemotaxis extensions formulated via a diffusive chemoattractant $c$ and establish the directionality dichotomy between one-way damped coupling (suppression) and two-way feedback (recovery), culminating in explicit mode-wise instability criteria. 
Section~\ref{sec:analysis} contains the corresponding proofs. Section~\ref{sec:discussion} concludes with biological interpretation, limitations, and directions for future work.

\section{Model formulation and standing assumptions} \label{sec:model_prelim}

\subsection{Hybrid PDE--ODE core model}
\label{subsec:hybrid_core_model}

Let $U\subseteq\mathbb{R}^N$ ($N\in\{1,2,3\}$) be a bounded domain with smooth boundary $\partial U$.
We consider the state vector
\[
\mathbf{u}(x,t) \coloneqq (S(x,t),R(x,t),D(x,t),P(x,t),A(x,t))^{\mathsf T},
\qquad (x,t)\in U\times(0,\infty),
\]
where $S$ and $R$ denote two competing motile populations, $D$ is a fast diffusing inhibitory (or drug-like) signal,
and $(P,A)$ represent non-motile microenvironmental states (passive/active) undergoing local phenotypic switching.
A defining feature of the model is its hybrid structure: $S,R,D$ are governed by parabolic PDEs, whereas $P$ and $A$
evolve by pointwise (in space) ODEs and therefore carry no boundary conditions. Figure~\ref{fig:diffusion} illustrates representative diffusing and non-diffusing components in the spatial domain $\Omega$.

\begin{figure}[hbtp]
    \centering
    \includegraphics[width=0.75\linewidth]{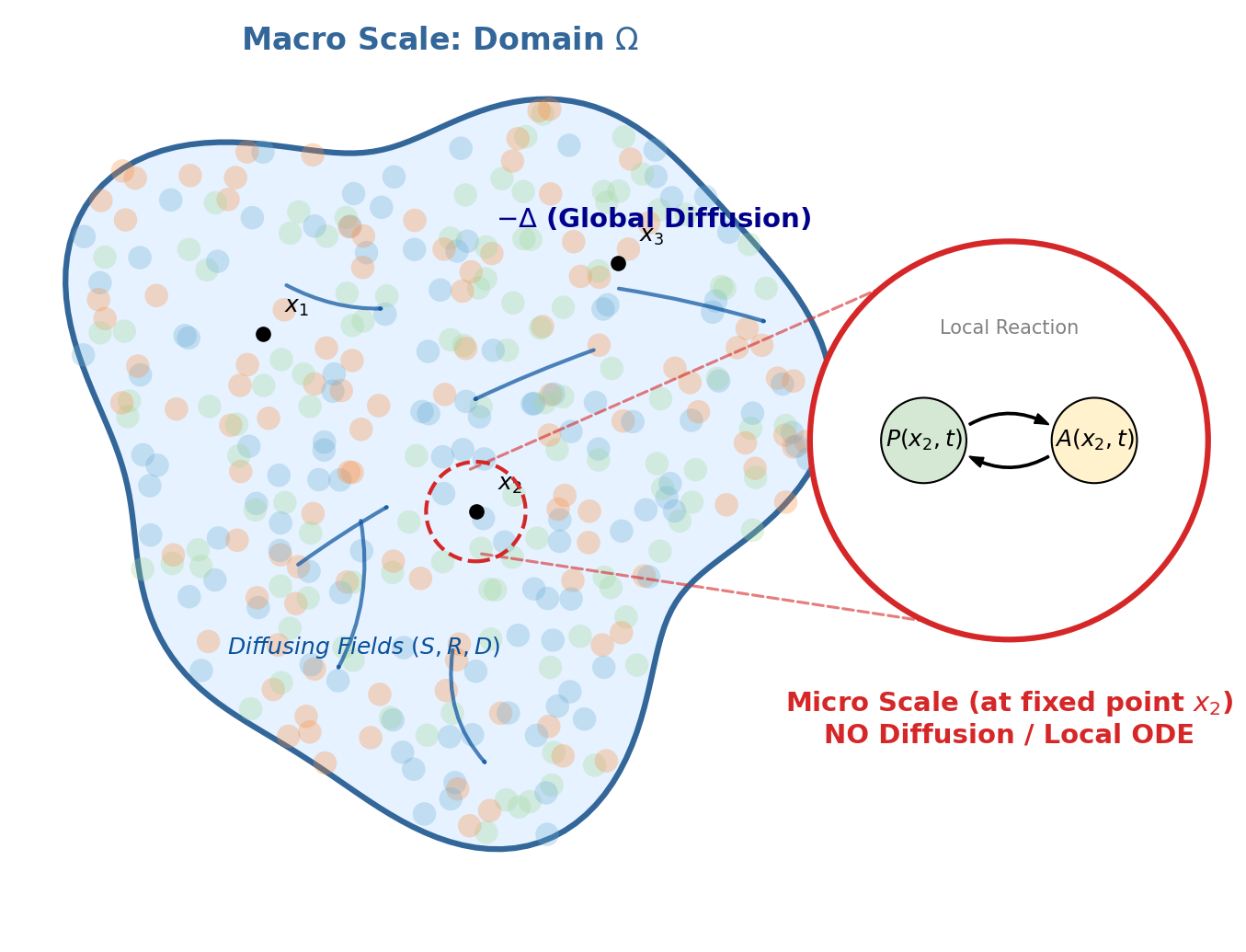}
    \caption{\textbf{The diffusive and non-diffusive components.}}
    \label{fig:diffusion}
\end{figure}

The core system reads
\begin{align}
\label{eq:full_system}
\begin{cases}
\partial_{t} S
= d_{S}\Delta S
+ \lambda_{S} S \left(1 - \tfrac{S+R}{K}\right)
- \alpha S
- \delta(D)S
+ \xi \bigl[1 - \phi(D)\bigr]R,
& x\in U,\ t>0, \\[4pt]
\partial_{t} R
= d_{R}\Delta R
+ \lambda_{R} R \left(1 - \tfrac{S+R}{K}\right)
+ \alpha S
+ \eta \phi(D)\,A R
- \xi \bigl[1 - \phi(D)\bigr]R,
& x\in U,\ t>0, \\[4pt]
\partial_{t} D
= d_{D}\Delta D
- \gamma_{d}D,
& x\in U,\ t>0, \\[4pt]
\partial_{t} P
= -\theta \phi(D)P
+ \beta \bigl[1 - \phi(D)\bigr]A,
& x\in U,\ t>0, \\[4pt]
\partial_{t} A
= \theta \phi(D)P
- \beta \bigl[1 - \phi(D)\bigr]A,
& x\in U,\ t>0.
\end{cases}
\end{align}
Here $(S,R)$ undergo logistic growth with shared carrying capacity $K$ and interconversion at rates $\alpha$ and $\xi$.
The signal $D$ suppresses $S$ via a saturating inhibition term and simultaneously modulates microenvironmental activation through $\phi(D)$.
Importantly, the $D$-equation is a decoupled damped diffusion equation, so that $D$ admits maximum-principle bounds and decays exponentially in time.
The non-motile subsystem $(P,A)$ is a two-state switching model driven by the local value $D(x,t)$; since it has no spatial derivatives, it is an ODE at each spatial point. Figure~\ref{fig:TME_interaction} shows the interaction structure of each component in \eqref{eq:full_system}.

\begin{figure}[hbtp]
    \centering
    \includegraphics[width=0.75\linewidth]{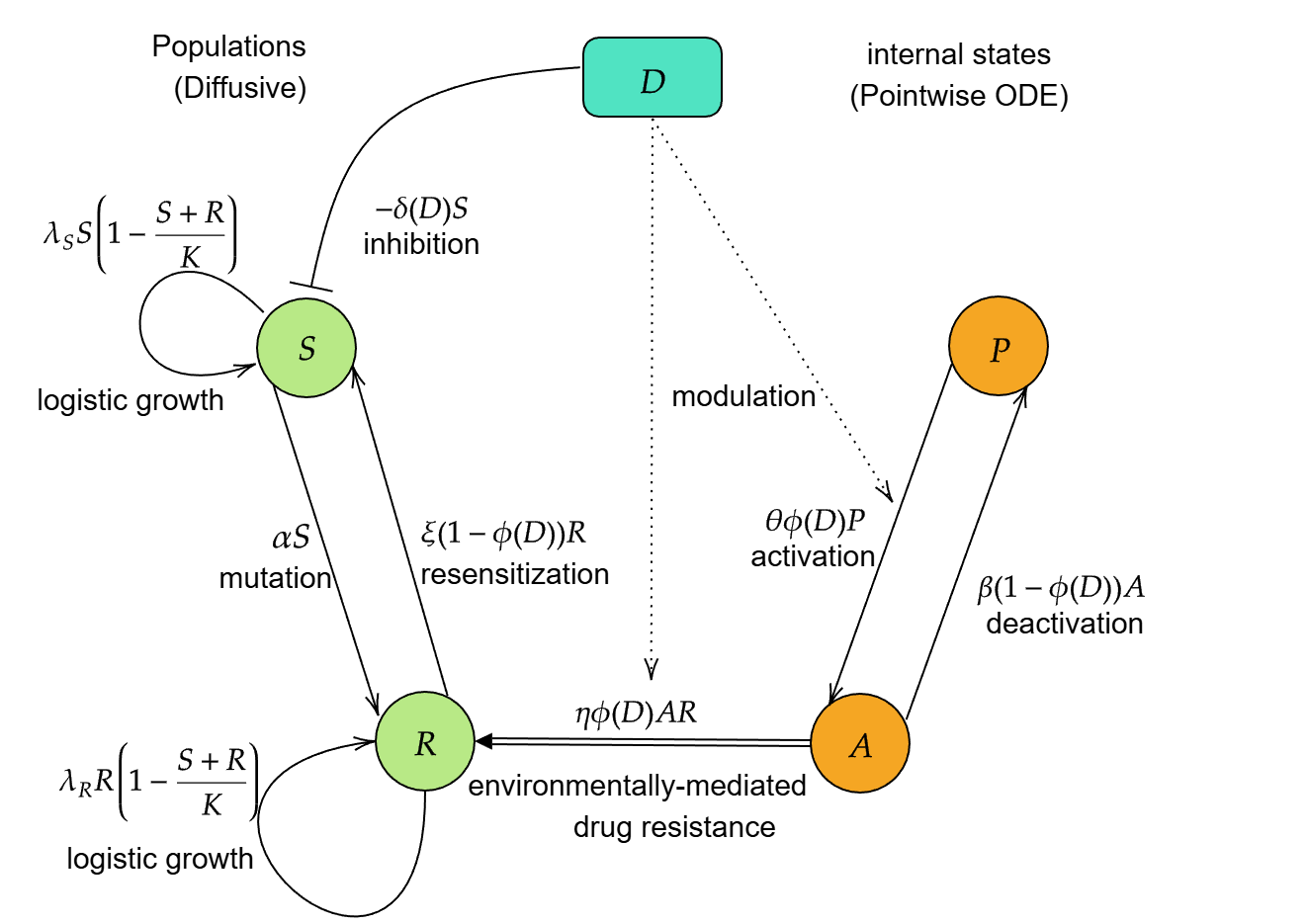}
    \caption{\textbf{The model structure and their interactions.}}
    \label{fig:TME_interaction}
\end{figure}

We fix the inhibition and activation functions as
\begin{equation}
\label{eq:delta_def}
\delta(D) \coloneqq \delta_{0}\frac{D}{D+K_{D}},
\qquad D\ge 0,
\end{equation}
and
\begin{equation}
\label{eq:phi_def}
\phi:[0,\infty)\to [0,1)\quad \text{is $C^{1}$, nondecreasing, globally Lipschitz, and satisfies }\ \phi(0)=0.
\end{equation} 
Figure~\ref{fig:phi_delta} shows the graphs of $\phi$ and $\delta$.
All parameters are assumed strictly positive unless stated otherwise (see Table~\ref{tab:parameters}).

\begin{figure}[htbp]
    \centering
    \includegraphics[width=0.75\linewidth]{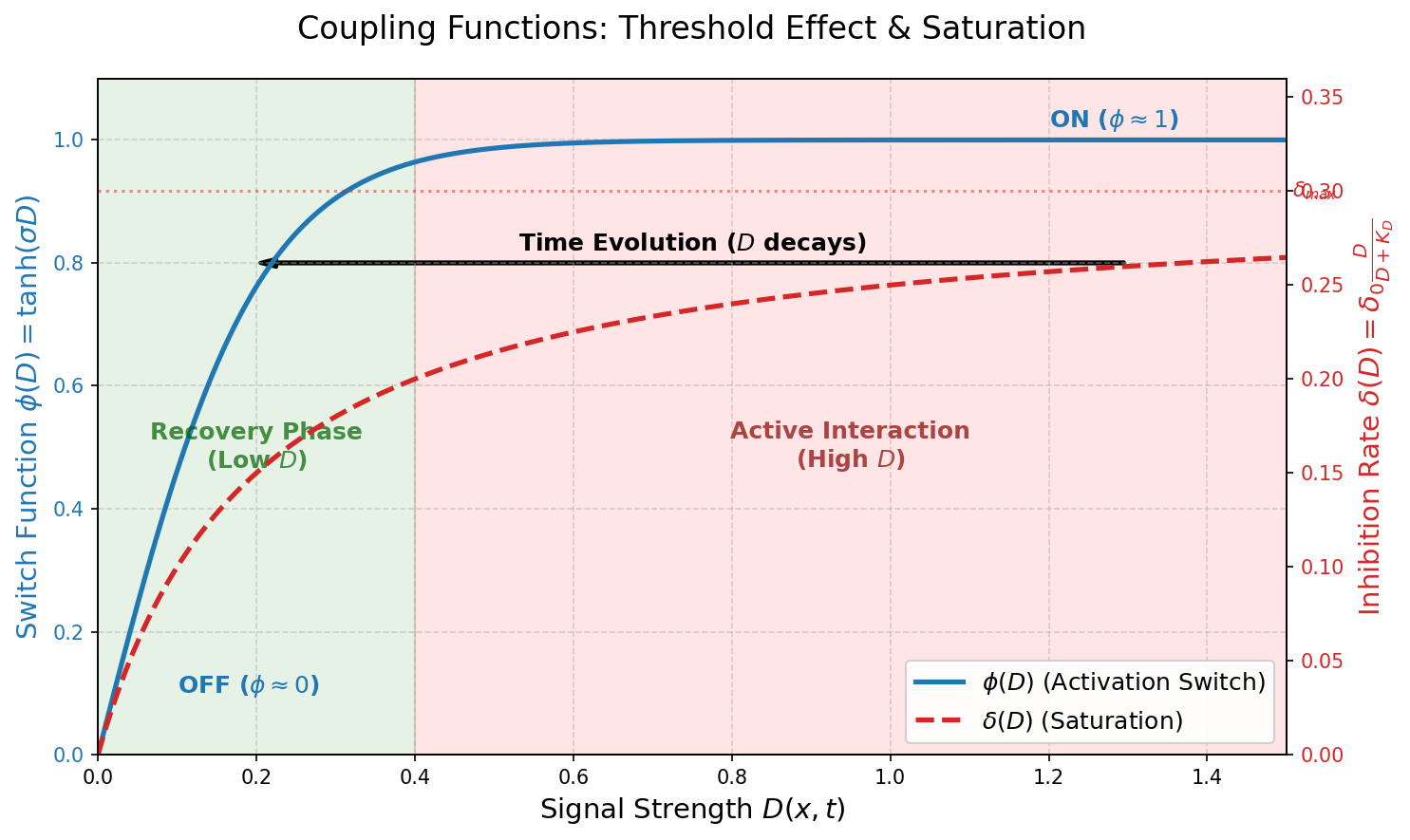}
    \caption{\textbf{Graphs of $\phi$ and $\delta$.}}
    \label{fig:phi_delta}
\end{figure}

\begin{table}[htbp]
\centering
\caption{Variables and parameters in the hybrid PDE--ODE model \eqref{eq:full_system}. Typical units are indicative.}
\label{tab:parameters}
\renewcommand{\arraystretch}{1.15}
\begin{tabular}{lll}
\hline
Symbol & Interpretation & Typical unit\\
\hline
$S(x,t)$ & population 1 (e.g.\ sensitive phenotype) & density\\
$R(x,t)$ & population 2 (e.g.\ resistant phenotype) & density\\
$D(x,t)$ & diffusive inhibitory signal & concentration\\
$P(x,t)$ & passive (quiescent) non-motile component & density \\
$A(x,t)$ & active non-motile component & density \\
\hline
$d_S,d_R$ & diffusion coefficients of $S,R$ & length$^2$/time\\
$d_D$ & diffusion coefficient of $D$ & length$^2$/time (fast)\\
$\lambda_S,\lambda_R$ & intrinsic growth rates & 1/time\\
$K$ & carrying capacity shared by $S,R$ & density\\
$\alpha$ & conversion rate $S\to R$ & 1/time\\
$\xi$ & conversion rate $R\to S$ (modulated by $D$) & 1/time\\
$\eta$ & promotion of $R$ by active component $A$ & 1/(density$\cdot$time)\\
$\delta_0$ & maximal inhibition rate of $S$ by $D$ & 1/time\\
$K_D$ & half-saturation constant in $\delta(D)$ & concentration\\
$\gamma_d$ & decay/clearance rate of $D$ & 1/time\\
$\theta$ & activation rate $P\to A$ (via $\phi(D)$) & 1/time\\
$\beta$ & deactivation rate $A\to P$ (via $1-\phi(D)$) & 1/time\\
$\phi(D)$ & activation function & $\in[0,1)$, smooth saturating\\
\hline
\end{tabular}
\end{table}

Initial and boundary conditions. We prescribe nonnegative initial data
\begin{equation}
\begin{aligned}
\label{eq:IC_core}
&S(\cdot,0)=S_0,\quad R(\cdot,0)=R_0,\quad D(\cdot,0)=D_0,\\[6pt] &P(\cdot,0)=P_0,\quad A(\cdot,0)=A_0, \quad \mathbf{u}_0\ge 0\ \text{a.e.\ in }U,
\end{aligned}
\end{equation}
with $\mathbf{u}_0\in (L^\infty(U))^5$ (and higher regularity when classical solutions are invoked).
Homogeneous Neumann boundary conditions are imposed only on the diffusive variables $(S,R,D)$:
\begin{equation}
\label{eq:BC_core}
\partial_{\mathrm n}S=\partial_{\mathrm n}R=\partial_{\mathrm n}D=0
\qquad \text{on }\partial U\times(0,\infty),
\end{equation}
where $\partial_{\mathrm n}$ denotes the outward normal derivative.
No boundary conditions are imposed on $P$ and $A$ since they satisfy pointwise ODEs.

\begin{remark}[Chemotaxis extensions]
In Section~\ref{subsec:main_directionality} we extend \eqref{eq:full_system} by introducing chemotactic drift for $S$ and $R$.
Because $A$ is non-diffusive, its gradient need not exist; we therefore formulate chemotaxis via a diffusive chemoattractant $c(x,t)$
driven by $A$ (and, in feedback settings, by the populations), so that $\nabla c$ is well-defined under \eqref{eq:BC_core}.    
\end{remark}

\subsection{Structural identities}
\label{subsec:structural_identities}

We record two elementary structural properties of \eqref{eq:full_system} that will be used repeatedly in the well-posedness
theory and in the long-time reduction argument. The first is a pointwise conservation law for the non-diffusive subsystem
$(P,A)$. The second is maximum-principle control and exponential energy decay for the decoupled damped diffusion equation
satisfied by the signaling field $D$.

\begin{lemma}[Pointwise conservation in the non-diffusive $(P,A)$ subsystem]
\label{lem:PA_conservation}
Let $(P,A)$ solve the last two equations of \eqref{eq:full_system} for a given (measurable) signal $D(x,t)$.
Then for a.e.\ $x\in U$ and all $t\ge 0$,
\begin{equation}
\label{eq:PA_conservation}
P(x,t)+A(x,t)=P_0(x)+A_0(x).
\end{equation}
In particular, if $P_0,A_0\ge 0$ a.e.\ in $U$, then $P(x,t),A(x,t)\ge 0$ for all $t\ge 0$, and
\begin{equation}
\label{eq:PA_Linfty_bound}
\sup_{t\ge0}\bigl(\|P(t)\|_{L^\infty(U)}+\|A(t)\|_{L^\infty(U)}\bigr)
\le \|P_0\|_{L^\infty(U)}+\|A_0\|_{L^\infty(U)}.
\end{equation}
\end{lemma}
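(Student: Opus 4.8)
The plan is to exploit the skew cancellation in the last two equations of \eqref{eq:full_system}. At each fixed $x\in U$ the pair $(P,A)$ solves the linear nonautonomous planar system $\dot P=-a(t)P+b(t)A$, $\dot A=a(t)P-b(t)A$, with $a(t):=\theta\,\phi(D(x,t))$ and $b(t):=\beta\,[1-\phi(D(x,t))]$. By \eqref{eq:phi_def}, $\phi$ maps into $[0,1)$ and is globally Lipschitz, so for a.e.\ $x$ the coefficients $a(\cdot),b(\cdot)$ are nonnegative, measurable, and bounded; the Carath\'eodory theory then yields a unique absolutely continuous solution $t\mapsto(P(x,t),A(x,t))$ for each such $x$. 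Adding the two equations gives $\partial_t(P+A)=0$, hence $t\mapsto(P+A)(x,t)$ is constant and equals $(P_0+A_0)(x)$, which is exactly \eqref{eq:PA_conservation}. The only subtlety at this step is the bookkeeping of the a.e.-in-$x$ quantifier and working in the Carath\'eodory framework, since at this stage $D$ is merely a bounded measurable signal (its smoothing and exponential decay are established afterwards and are not needed here).

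For positivity I would decouple using the conserved total $M(x):=P_0(x)+A_0(x)\ge 0$. Substituting $A=M-P$ into the $P$-equation produces the scalar linear ODE $\dot P=-(a(t)+b(t))P+b(t)M(x)$ with $P(x,0)=P_0(x)$, whose variation-of-constants representation,
\[
P(x,t)=e^{-\Lambda(t)}P_0(x)+M(x)\int_0^t e^{-(\Lambda(t)-\Lambda(s))}b(s)\,ds,\qquad \Lambda(t):=\int_0^t\!\bigl(a+b\bigr)\,ds,
\]
is manifestly nonnegative because $P_0\ge0$, $M\ge0$, and $b\ge0$. The symmetric substitution $P=M-A$ gives $\dot A=-(a(t)+b(t))A+a(t)M(x)$ and hence $A(x,t)\ge0$ by the same computation; combined with \eqref{eq:PA_conservation} this produces the pointwise envelope $0\le P(x,t),A(x,t)\le M(x)$ for a.e.\ $x$ and all $t\ge0$.

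Finally, \eqref{eq:PA_Linfty_bound} is read off from that envelope: since $0\le P(x,t),A(x,t)\le(P_0+A_0)(x)$ pointwise, each of $\|P(t)\|_{L^\infty(U)}$ and $\|A(t)\|_{L^\infty(U)}$ stays at most $\|P_0+A_0\|_{L^\infty(U)}\le\|P_0\|_{L^\infty(U)}+\|A_0\|_{L^\infty(U)}$, uniformly in $t\ge0$, which is the asserted uniform-in-time $L^\infty$ control. No boundary condition enters anywhere, consistent with $(P,A)$ satisfying pointwise ODEs rather than PDEs, and no regularity of $D$ beyond measurability and boundedness is used — which is precisely what will let the later maximum-principle and decay bounds on $D$ be inserted without circularity. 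I do not anticipate a genuine obstacle here: the statement is elementary once the skew-symmetric structure is recognized, and the only point needing care is phrasing existence, uniqueness, and positivity for the pointwise ODE when the coefficient $D(x,\cdot)$ is merely bounded and measurable in $t$ rather than continuous.
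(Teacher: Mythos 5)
Your route is essentially the paper's. The conservation identity \eqref{eq:PA_conservation} is proved exactly as in the paper (fix $x$, add the two ODEs, conclude $\partial_t(P+A)=0$). For nonnegativity the paper simply invokes quasi-positivity of the linear $(P,A)$ system, whereas you make this explicit by substituting the conserved total $M=P_0+A_0$ and writing the variation-of-constants formula for the resulting scalar ODE; this is a correct, slightly more self-contained variant of the same mechanism, and it has the added benefit of yielding the pointwise envelope $0\le P(x,t),A(x,t)\le P_0(x)+A_0(x)$. Your attention to Carath\'eodory well-posedness when $D(x,\cdot)$ is merely bounded and measurable is a sensible refinement that the paper leaves implicit.

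The one genuine defect is the final inference. Your envelope gives $\|P(t)\|_{L^\infty(U)}\le\|P_0+A_0\|_{L^\infty(U)}$ and the same for $A$, hence it controls the sum $\|P(t)\|_{L^\infty(U)}+\|A(t)\|_{L^\infty(U)}$ only by $2\,\|P_0+A_0\|_{L^\infty(U)}$, not by $\|P_0\|_{L^\infty(U)}+\|A_0\|_{L^\infty(U)}$ as in \eqref{eq:PA_Linfty_bound}: the two suprema may be attained at different points $x$. Indeed \eqref{eq:PA_Linfty_bound} as literally written can fail: take $P_0\equiv2$, $A_0\equiv0$, and a measurable signal with $\phi(D)\equiv p\in(0,1)$ on one half of $U$ and $\phi(D)\equiv0$ on the other half (possible whenever $\phi\not\equiv0$). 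On the second half $A\equiv0$ and $P\equiv2$ for all time, while on the first half $A(t)\to 2\theta p/\bigl(\theta p+\beta(1-p)\bigr)>0$, so $\|P(t)\|_{L^\infty}+\|A(t)\|_{L^\infty}$ eventually exceeds $\|P_0\|_{L^\infty}+\|A_0\|_{L^\infty}=2$. So your closing claim that the envelope ``is the asserted control'' is a non sequitur --- though, to be fair, the paper's own one-line justification (``follows directly from \eqref{eq:PA_conservation}'') makes the same leap. What the later arguments actually need is a uniform-in-time $L^\infty$ bound on $P$ and $A$, and that does follow from your envelope, e.g.\ $\max\{\|P(t)\|_{L^\infty},\|A(t)\|_{L^\infty}\}\le\|P_0\|_{L^\infty}+\|A_0\|_{L^\infty}$ or $\sup_{t\ge0}\|P(t)+A(t)\|_{L^\infty}\le\|P_0\|_{L^\infty}+\|A_0\|_{L^\infty}$; state one of these (or accept the factor $2$) rather than claiming \eqref{eq:PA_Linfty_bound} verbatim.
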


\begin{proof}
Fix $x\in U$. Adding the $P$- and $A$-equations in \eqref{eq:full_system} yields
$\partial_t(P(x,t)+A(x,t))=0$, hence \eqref{eq:PA_conservation}.
Nonnegativity follows from quasi-positivity of the linear ODE system for $(P(x,\cdot),A(x,\cdot))$, and
\eqref{eq:PA_Linfty_bound} follows directly from \eqref{eq:PA_conservation}.
\end{proof}

\begin{proposition}[Maximum principle and exponential energy decay for the signal $D$]
\label{prop:D_decay}
Let $D$ solve the decoupled damped diffusion equation
\[
\partial_t D = d_D\Delta D - \gamma_d D \quad \text{in }U\times(0,\infty),\qquad
\partial_{\mathrm n}D=0 \ \text{on }\partial U\times(0,\infty),\qquad
D(\cdot,0)=D_0,
\]
with $d_D>0$, $\gamma_d>0$, and $D_0\in L^\infty(U)$, $D_0\ge0$ a.e.\ in $U$.
Then:
\begin{enumerate}[label=(\roman*), leftmargin=*]
\item (\textbf{$L^\infty$ bound}) For all $t\ge 0$,
\begin{equation}
\label{eq:D_Linfty_bound}
0\le D(x,t)\le \|D_0\|_{L^\infty(U)} \qquad \forall x\in U.
\end{equation}
\item (\textbf{$L^2$ energy identity}) For all $t>0$,
\begin{equation}
\label{eq:D_energy_identity}
\frac{1}{2}\frac{d}{dt}\|D(t)\|_{L^2(U)}^2
+d_D\|\nabla D(t)\|_{L^2(U)}^2+\gamma_d\|D(t)\|_{L^2(U)}^2=0.
\end{equation}
\item (\textbf{Exponential decay}) Consequently, for all $t\ge0$,
\begin{equation}
\label{eq:D_L2_decay}
\|D(t)\|_{L^2(U)}\le e^{-\gamma_d t}\|D_0\|_{L^2(U)}.
\end{equation}
\end{enumerate}
In particular, by the structural assumptions on $\phi$ and $\delta$,
\begin{equation}
\label{eq:phi_delta_vanish}
\phi(D(\cdot,t))\to0,\qquad \delta(D(\cdot,t))\to0
\quad \text{as }t\to\infty,
\end{equation}
in any topology in which $D(\cdot,t)\to0$.
\end{proposition}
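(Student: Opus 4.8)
The plan is to handle the three assertions in order, exploiting that the $D$-equation is linear, autonomous and fully decoupled, so the exponential weight $e^{\gamma_d t}$ reduces it to a pure Neumann heat equation. For the $L^\infty$ bound \eqref{eq:D_Linfty_bound}, I would set $D(x,t)=e^{-\gamma_d t}v(x,t)$; substituting into the equation shows $v$ solves $\partial_t v=d_D\Delta v$ on $U\times(0,\infty)$ with $\partial_{\mathrm n}v=0$ on $\partial U$ and $v(\cdot,0)=D_0$. The Neumann heat semigroup is positivity preserving and an $L^\infty$-contraction (standard maximum principle, or comparison with the constant sub/supersolutions $0$ and $\|D_0\|_{L^\infty(U)}$), so $0\le v(x,t)\le\|D_0\|_{L^\infty(U)}$; multiplying back by $e^{-\gamma_d t}\le 1$ gives \eqref{eq:D_Linfty_bound}, in fact with the sharper decaying bound $D(x,t)\le e^{-\gamma_d t}\|D_0\|_{L^\infty(U)}$. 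Equivalently one observes directly that $w(x,t):=\|D_0\|_{L^\infty(U)}e^{-\gamma_d t}$ and $0$ are respectively a super- and a subsolution of the damped equation sandwiching $D_0$ at $t=0$, and applies the comparison principle for $\partial_t-d_D\Delta+\gamma_d$.

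For the energy identity \eqref{eq:D_energy_identity}, parabolic smoothing makes $D(\cdot,t)$ smooth for $t>0$, so I would multiply the equation by $D$ and integrate over $U$: the time term yields $\tfrac12\tfrac{d}{dt}\|D(t)\|_{L^2(U)}^2$, the diffusion term yields $-d_D\|\nabla D(t)\|_{L^2(U)}^2$ after integrating by parts (the boundary integral $\int_{\partial U}D\,\partial_{\mathrm n}D$ vanishing by the Neumann condition), and the damping term yields $-\gamma_d\|D(t)\|_{L^2(U)}^2$; rearranging gives \eqref{eq:D_energy_identity}. If one wishes to bypass the regularity appeal, the same identity follows by testing the mild/weak formulation against $D$ together with a routine approximation. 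For the exponential decay \eqref{eq:D_L2_decay}, I would drop the nonnegative term $d_D\|\nabla D(t)\|_{L^2(U)}^2$ in \eqref{eq:D_energy_identity} to get $\tfrac{d}{dt}\|D(t)\|_{L^2(U)}^2\le-2\gamma_d\|D(t)\|_{L^2(U)}^2$, and then Grönwall's inequality gives $\|D(t)\|_{L^2(U)}^2\le e^{-2\gamma_d t}\|D_0\|_{L^2(U)}^2$, i.e.\ \eqref{eq:D_L2_decay}.

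Finally, for \eqref{eq:phi_delta_vanish}: by \eqref{eq:phi_def}, $\phi$ is globally Lipschitz with $\phi(0)=0$, so $\phi(D)\le L_\phi D$ for $D\ge0$ with $L_\phi$ the Lipschitz constant; and by \eqref{eq:delta_def}, $0\le\delta(D)\le(\delta_0/K_D)D$ for $D\ge0$. Hence in any norm $\|\cdot\|$ for which $D(\cdot,t)\to0$ one has $\|\phi(D(\cdot,t))\|\le L_\phi\|D(\cdot,t)\|\to0$ and $\|\delta(D(\cdot,t))\|\le(\delta_0/K_D)\|D(\cdot,t)\|\to0$; the $L^2$-decay of part (iii) (or the $L^\infty$-bound of part (i) combined with $L^2$-decay and interpolation) supplies the relevant topology.

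I do not anticipate a genuine obstacle here; the only items needing mild care are justifying the energy manipulations at the regularity actually available — resolved by parabolic smoothing on $(0,\infty)$ or by testing the weak formulation — and matching ``any topology in which $D(\cdot,t)\to0$'' in \eqref{eq:phi_delta_vanish} to a norm in which the above Lipschitz/linear bounds on $\phi$ and $\delta$ act, which is immediate from parts (i) and (iii).
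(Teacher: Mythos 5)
Your proposal is correct and follows essentially the same route as the paper: the $L^\infty$ bound via the parabolic maximum/comparison principle (your exponential-weight substitution $D=e^{-\gamma_d t}v$ is just a more explicit way of running that argument, and even yields a sharper decaying bound), the energy identity by testing with $D$ and integrating by parts under the Neumann condition, and the decay by dropping the gradient term and applying Gr\"onwall, with the final statement on $\phi(D)$ and $\delta(D)$ handled by the same Lipschitz-at-zero observation the paper relies on.
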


\begin{proof}
The bound \eqref{eq:D_Linfty_bound} follows from the parabolic maximum principle for the damped heat equation with Neumann boundary conditions and nonnegative initial data.
Identity \eqref{eq:D_energy_identity} is obtained by multiplying the equation by $D$, integrating over $U$, and integrating by parts using $\partial_{\mathrm n}D=0$.
Estimate \eqref{eq:D_L2_decay} follows immediately from \eqref{eq:D_energy_identity} by Gr\"onwall's inequality.
\end{proof}

\begin{remark}[Spectral representation]
When an explicit solution structure for $D$ is needed (e.g.\ for mode-wise arguments), $D$ admits a Neumann eigenfunction expansion.
We collect the spectral representation and the required eigenfunction estimates in Appendix~\ref{app:spectral}.    
\end{remark}

\subsection{Functional-analytic setting and solution concepts}
\label{subsec:functional_setting}

We briefly specify the function spaces and solution concepts used for the hybrid PDE--ODE system \eqref{eq:full_system}.
The diffusive components $(S,R,D)$ are treated by semilinear parabolic theory under homogeneous Neumann boundary conditions,
while the non-diffusive variables $(P,A)$ solve pointwise ODEs in space driven by $D(x,t)$.

\subsubsection{Spaces and notation.}
Let $T>0$ and write $U_T\coloneqq U\times(0,T]$.
We denote by $(\cdot,\cdot)$ the $L^2(U)$ inner product and by $\langle\cdot,\cdot\rangle$ the duality pairing between
$H^1(U)$ and $H^1(U)'$.
For the parabolic variables under Neumann boundary conditions we use the standard energy space
\begin{equation}
\label{eq:V_space}
V_{\mathfrak{N}}(T)\coloneqq L^2(0,T;H^1(U))\cap H^1(0,T;H^1(U)').
\end{equation}

\subsubsection{Abstract form and analytic semigroup.}
Let $\Delta_{\mathfrak N}$ denote
the Laplacian on $U$ with homogeneous Neumann boundary conditions.
We work on the Banach lattice
\begin{align*}
X &\coloneqq \bigl(C(\overline U)\bigr)^3 \times \bigl(L^\infty(U)\bigr)^2,
\\[6pt]
X_+ &\coloneqq \{\mathbf u\in X:\ u_i\ge 0\ \text{(pointwise for $i=1,2,3$ and a.e.\ for $i=4,5$)}\}.
\end{align*}
Write $\mathbf u=(S,R,D,P,A)^{\mathsf T}$ and set
\[
\mathcal D \coloneqq \mathrm{diag}(d_S,d_R,d_D,0,0).
\]
Define the linear operator $\mathcal A:\mathcal D(\mathcal A)\subset X\to X$ by
\begin{equation*}
\begin{aligned}
\mathcal A\mathbf u &\coloneqq \mathcal D(-\Delta_{\mathfrak N}+I)\mathbf u, \\[4pt]
\mathcal D(\mathcal A) &\coloneqq
\Bigl\{(S,R,D,P,A)\in \bigl(C^2(\overline U)\bigr)^3\times \bigl(L^\infty(U)\bigr)^2:
\ \partial_{\mathrm n}S=\partial_{\mathrm n}R=\partial_{\mathrm n}D=0\Bigr\}.
\end{aligned}
\end{equation*}
Then $-\mathcal A$ generates the positive analytic semigroup $\{\mathbb P(t)\}_{t\ge 0}$ on $X$ given by
\begin{equation}\label{eq:semigroup_def_shifted}
\mathbb P(t)\coloneqq
\operatorname{diag}\!\Bigl(e^{d_S t(\Delta_{\mathfrak N}-I)},\ e^{d_R t(\Delta_{\mathfrak N}-I)},\
e^{d_D t(\Delta_{\mathfrak N}-I)},\ I,\ I\Bigr),\qquad t\ge 0,
\end{equation}
where $I$ denotes the identity on $L^\infty(U)$.

We rewrite \eqref{eq:full_system} in the abstract form
\begin{equation}\label{eq:abstract_shifted}
\partial_t \mathbf u + \mathcal A \mathbf u = \mathcal F(\mathbf u),
\qquad
\mathcal F(\mathbf u)\coloneqq \mathcal D\mathbf u + \mathcal G(\mathbf u),
\end{equation}
where $\mathcal G$ collects the reaction terms in \eqref{eq:full_system}.

\begin{definition}[Mild solution]\label{def:mild_sol}
Let $\mathbf u_0\in X$ and $T\in(0,\infty]$.
A function $\mathbf u:[0,T)\to X$ is called a mild solution of \eqref{eq:full_system} on $[0,T)$ if
$\mathbf u\in C([0,T);X)$, $\mathcal F(\mathbf u(\cdot))\in L^1((0,t);X)$ for all
$t<T$, and
\begin{equation}\label{eq:mild_formula_shifted}
\mathbf u(t)=\mathbb P(t)\mathbf u_0 + \int_0^t \mathbb P(t-s)\,\mathcal F(\mathbf u(s))\,ds,
\qquad t\in[0,T),
\end{equation}
where the integral is a Bochner integral in $X$.
\end{definition}

\begin{remark}[Unshifted form]\label{rem:unshifted_equiv}
Formulation \eqref{eq:abstract_shifted} is equivalent to
$\partial_t\mathbf u-\mathcal D\Delta_{\mathfrak N}\mathbf u=\mathcal G(\mathbf u)$ on $X$.
In particular, one may alternatively use the semigroup
$\operatorname{diag}(e^{d_S t\Delta_{\mathfrak N}},e^{d_R t\Delta_{\mathfrak N}},e^{d_D t\Delta_{\mathfrak N}},I,I)$;
the two choices differ only by whether the linear term $-\mathcal D\mathbf u$ is kept on the left
or absorbed into $\mathcal F$.
\end{remark}

\subsection{Chemotaxis extensions with a diffusive signal $c$}
\label{subsec:chemotaxis_extensions_overview}

Chemotactic drift terms of Keller--Segel type require the spatial gradient of a signal field.
In the hybrid core model \eqref{eq:full_system}, however, the activated microenvironmental state $A(x,t)$ is non-diffusive
and evolves by a pointwise ODE; in general one only has $A(\cdot,t)\in L^\infty(U)$, so $\nabla A$ is not well-defined.
To formulate directed migration rigorously while preserving the interpretation of $A$ as a non-motile state variable, we
introduce a diffusive chemoattractant $c(x,t)$ and write chemotaxis in the form
\[
-\chi_S\nabla\cdot(S\nabla c),\qquad -\chi_R\nabla\cdot(R\nabla c),
\]
with homogeneous Neumann boundary conditions $\partial_{\mathrm n}c=0$ on $\partial U$.
The signal $c$ may be generated by the microenvironmental activation $A$ (one-way coupling) and, in feedback settings, may also
depend on the population densities (two-way coupling). The resulting formulations allow a clean separation between
directional coupling regimes and their mode-wise stability consequences.

\subsubsection{Template I: one-way damped coupling (no feedback from $(S,R)$ to $c$).}
A prototypical unidirectional chemotaxis extension of \eqref{eq:full_system} takes the form
\begin{equation}
\label{eq:chemo_oneway_template}
\begin{cases}
\partial_t S = d_S\Delta S - \chi_S\nabla\cdot(S\nabla c) + f_S(S,R),\\[4pt]
\partial_t R = d_R\Delta R - \chi_R\nabla\cdot(R\nabla c) + f_R(S,R),\\[4pt]
\partial_t c = d_c\Delta c + \kappa A - \rho c,
\end{cases}
\qquad \partial_{\mathrm n}S=\partial_{\mathrm n}R=\partial_{\mathrm n}D=\partial_{\mathrm n}c=0,
\end{equation}
where $(D,P,A)$ evolve as in \eqref{eq:full_system} and $(f_S,f_R)$ denote the corresponding reaction terms in the $S$- and
$R$-equations (e.g.\ the right-hand sides in \eqref{eq:full_system}).
Here $c$ is produced by $A$ and decays at rate $\rho>0$, so perturbations in $c$ are damped and do not receive feedback from
$(S,R)$. For robustness, we also consider the abstract damped signal model
\begin{equation*}
\partial_t c = d_c\Delta c + \mathcal{Q}(c), \qquad \mathcal{Q}'(c^*)<0,
\end{equation*}
which isolates the role of one-way damping in the linearization.

\subsubsection{Template II: two-way coupling (feedback from $(S,R)$ to $c$).}
To capture positive feedback loops characteristic of chemotaxis-driven pattern formation, we also study
bidirectional couplings in which the signal depends on the populations:
\begin{equation}
\label{eq:chemo_twoway_template}
\begin{cases}
\partial_t S = d_S\Delta S - \chi_S\nabla\cdot(S\nabla c) + f_S(S,R),\\[4pt]
\partial_t R = d_R\Delta R - \chi_R\nabla\cdot(R\nabla c) + f_R(S,R),\\[4pt]
\partial_t c = d_c\Delta c + q\,c + h(S,R), \qquad q<0,
\end{cases}
\qquad \partial_{\mathrm n}S=\partial_{\mathrm n}R=\partial_{\mathrm n}c=0,
\end{equation}
where $h(S,R)$ encodes signal production (or activation) by the populations and $q<0$ enforces linear damping of $c$.

\subsubsection{Fast-relaxation closure used in the mode-wise criteria.}
For the purpose of explicit mode-wise instability criteria, it is convenient to consider a fast-relaxation regime for
the signal, e.g.\ $\varepsilon\,\partial_t c=d_c\Delta c+q\,c+h(S,R)$ with $\varepsilon\ll 1$.
Formally, as $\varepsilon\to0$ one obtains a quasi-steady closure
\[
c \approx \mathcal{L}^{-1}\!\bigl(-h(S,R)\bigr), \qquad \mathcal{L}\coloneqq d_c\Delta + q,
\]
which, under local approximations (or parameter limits), yields an effective reduced dependence $c\approx g(S,R)$.
This fast-relaxation closure is justified in Appendix~\ref{app:closure}.
In Section~\ref{subsec:main_directionality} we use this reduced closure to derive explicit trace/determinant conditions for the instability
of individual Neumann Laplacian modes; the one-way case is treated separately and leads to a block-triangular linearization
that suppresses pattern formation.

\section{Main results} \label{sec:main_results}

\subsection{Well-posedness and positivity}
\label{subsec:main_pos_wellposed}

We first establish well-posedness and preservation of nonnegativity for the hybrid PDE--ODE system \eqref{eq:full_system}. We postpone all proofs in Section~\ref{sec:main_results} to Section~\ref{sec:analysis}.

\begin{lemma}[Local Lipschitz continuity of the reaction map]
\label{lem:local_lipschitz}
Assume \eqref{eq:delta_def} and \eqref{eq:phi_def}. Then the map
$\mathcal{G}:\mathbb{R}^5\to\mathbb{R}^5$ defined by the right-hand side of \eqref{eq:full_system}
is locally Lipschitz continuous. Equivalently, for each $M>0$ there exists $L_M>0$ such that for all
$\mathbf{z},\mathbf{y}\in[-M,M]^5$,
\[
|\mathcal{G}(\mathbf{z})-\mathcal{G}(\mathbf{y})|\le L_M\,|\mathbf{z}-\mathbf{y}|.
\]
\end{lemma}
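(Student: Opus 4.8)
The plan is to exploit the fact that $\mathcal G$ is assembled from finitely many elementary pieces, each of which is either a polynomial in $(S,R,D,P,A)$ or one of the two scalar functions $\phi(D)$, $\delta(D)$, and then to invoke the standard closure properties of the class of maps that are Lipschitz on a fixed bounded set: a $C^1$ map is Lipschitz there (with constant the supremum of the operator norm of its derivative), finite sums of such maps are again Lipschitz, and a product $fg$ of bounded scalar functions that are Lipschitz on the set satisfies $\mathrm{Lip}(fg)\le \|f\|_\infty\,\mathrm{Lip}(g)+\|g\|_\infty\,\mathrm{Lip}(f)$ there. With these tools in hand the whole argument reduces to (i) checking that each building block is bounded and Lipschitz on $[-M,M]^5$ with an explicit constant, and (ii) summing the contributions.

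First I would record the one nontrivial scalar ingredient. By \eqref{eq:phi_def}, $\phi$ is globally Lipschitz with some constant $L_\phi$ and satisfies $0\le\phi<1$, so $\phi$ and $1-\phi$ are bounded by $1$. For $\delta$ in \eqref{eq:delta_def}, on $D\ge 0$ one has $0\le\delta(D)\le\delta_0$ and $\delta'(D)=\delta_0 K_D/(D+K_D)^2\in(0,\delta_0/K_D]$, so $\delta$ is globally Lipschitz on $[0,\infty)$ with constant $\delta_0/K_D$; to cover the full cube $[-M,M]^5$ as stated, one simply replaces $\delta$ by the Lipschitz extension $D\mapsto\delta(\max\{D,0\})$, which agrees with $\delta$ on the physically relevant region $D\ge 0$. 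The remaining pieces — $\lambda_S S(1-(S+R)/K)$, $-\alpha S$, the $\alpha S$ term in the $R$-equation, $\lambda_R R(1-(S+R)/K)$, and $-\gamma_d D$ — are polynomials, hence $C^1$, hence Lipschitz on $[-M,M]^5$ with a constant bounded by a fixed polynomial in $M$ and the parameters.

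Next I would dispatch the mixed products term by term using the product rule above: $\delta(D)S$ has Lipschitz constant $\le\delta_0+M\,\delta_0/K_D$ on the cube; $\xi[1-\phi(D)]R$ has constant $\le\xi+M\xi L_\phi$; and the $(P,A)$-block terms $\theta\phi(D)P$ and $\beta[1-\phi(D)]A$ have constants $\le\theta+M\theta L_\phi$ and $\le\beta+M\beta L_\phi$, respectively. The only three-factor term, $\eta\,\phi(D)\,A\,R$, is handled by a two-step telescoping: writing $\phi(D_1)A_1R_1-\phi(D_2)A_2R_2=(\phi(D_1)-\phi(D_2))A_1R_1+\phi(D_2)(A_1-A_2)R_1+\phi(D_2)A_2(R_1-R_2)$ and using $|\phi|\le 1$ together with $|A_i|,|R_i|\le M$ gives
\[
|\eta\phi(D_1)A_1R_1-\eta\phi(D_2)A_2R_2|\le \eta M^2 L_\phi\,|D_1-D_2|+\eta M\,|A_1-A_2|+\eta M\,|R_1-R_2|.
\]
Summing the five componentwise estimates (passing from these componentwise bounds to the Euclidean norm costs only the dimensional factor $\sqrt 5$) produces a single $L_M$ that is an explicit polynomial in $M$, $K^{-1}$, $K_D^{-1}$, $L_\phi$, and the rate parameters.

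The honest assessment is that there is no genuine obstacle here: the statement is essentially mechanical once the product-rule and telescoping bookkeeping is organized. The only points deserving a word of care are the domain of $\delta$ in \eqref{eq:delta_def} — resolved by the Lipschitz extension noted above — and the observation that the cubic term $\eta\phi(D)AR$ (and the logistic quadratics) make $\mathcal G$ only \emph{locally}, not globally, Lipschitz, so the a priori localization to $[-M,M]^5$ is essential and is exactly what the stated formulation provides.
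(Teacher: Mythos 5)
Your proposal is correct and follows essentially the same route as the paper's proof: decompose $\mathcal{G}$ into sums of products of bounded factors that are polynomial in $(S,R,P,A)$ or Lipschitz in $D$ (using \eqref{eq:phi_def} for $\phi$ and the bounded derivative of $\delta$), then combine via the product/telescoping estimates to get an $L_M$ depending only on $M$ and the parameters. Your extra care in extending $\delta$ to negative arguments via $D\mapsto\delta(\max\{D,0\})$ addresses a point the paper handles implicitly by working on $[0,M]$, but otherwise the arguments coincide.
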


\begin{theorem}[Nonnegativity]\label{thm:positivity}
Assume \eqref{eq:delta_def}--\eqref{eq:phi_def} and let $\mathbf u=(S,R,D,P,A)$ be a classical solution of
\eqref{eq:full_system}--\eqref{eq:BC_core} on its maximal interval of existence $[0,T_{\max})$.
If the initial data satisfy \eqref{eq:IC_core}, then
\[
S(x,t),\ R(x,t),\ D(x,t),\ P(x,t),\ A(x,t)\ge 0
\qquad\text{for all }(x,t)\in \overline U\times[0,T_{\max}).
\]
\end{theorem}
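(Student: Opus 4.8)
The plan is to establish nonnegativity of $\mathbf u=(S,R,D,P,A)$ by treating the three parabolic components and the two ODE components together, invoking the quasi-positivity structure of the reaction terms. The key observation is that the reaction map $\mathcal G$ is \emph{quasi-positive}: whenever a component vanishes while the others are nonnegative, the corresponding reaction term is nonnegative. I would verify this componentwise. For $D$: the $D$-equation is decoupled and Proposition~\ref{prop:D_decay}(i) already gives $0\le D(x,t)\le\|D_0\|_{L^\infty}$, so nonnegativity of $D$ is immediate and I would cite it first. For $S$: when $S=0$ and $R,D\ge 0$, the right-hand side reduces to $\xi[1-\phi(D)]R\ge 0$ since $\phi(D)\in[0,1)$ and $R\ge 0$. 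For $R$: when $R=0$ and $S,D\ge 0$, the right-hand side reduces to $\alpha S\ge 0$. For $P$: when $P=0$ and $A\ge 0$, it reduces to $\beta[1-\phi(D)]A\ge 0$. For $A$: when $A=0$ and $P\ge 0$, it reduces to $\theta\phi(D)P\ge 0$. This is exactly the structure needed for an invariant-region / comparison argument.

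For the rigorous argument I would use the standard perturbation trick. Fix $T<T_{\max}$; on $\overline U\times[0,T]$ all components are bounded (classical solution), say by $M$, and by Lemma~\ref{lem:local_lipschitz} the reaction map is Lipschitz with constant $L_M$ there. For $\varepsilon>0$ introduce the shifted variables $\tilde S=S+\varepsilon e^{\Lambda t}$, $\tilde R=R+\varepsilon e^{\Lambda t}$, $\tilde P=P+\varepsilon e^{\Lambda t}$, $\tilde A=A+\varepsilon e^{\Lambda t}$ (and keep $D$ as is, since its nonnegativity is already known), with $\Lambda$ chosen larger than the relevant Lipschitz/coupling constants. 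One checks that at $t=0$ these are strictly positive, and shows that the first time any of them would hit $0$ leads to a contradiction: at such a point the shifted variable has nonpositive time derivative and nonnegative spatial Laplacian contribution (interior minimum, or Neumann boundary via Hopf), while the reaction term, evaluated using quasi-positivity plus the Lipschitz bound on the $\varepsilon e^{\Lambda t}$ correction, is bounded below by something strictly positive once $\Lambda$ is large enough — contradicting that the derivative is $\le 0$. Letting $\varepsilon\to 0$ yields $S,R,P,A\ge 0$ on $[0,T]$, and since $T<T_{\max}$ was arbitrary, on $[0,T_{\max})$.

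Alternatively — and perhaps more cleanly for the write-up — one can decouple the argument: first establish $D\ge 0$ from Proposition~\ref{prop:D_decay}; then, treating $D$ as a known nonnegative coefficient, observe that $(P,A)$ satisfy the linear pointwise ODE system of Lemma~\ref{lem:PA_conservation}, which is quasi-positive, hence $(P,A)\ge 0$ from nonnegative data; finally, with $D,P,A\ge 0$ now fixed, the $(S,R)$ pair solves a quasi-positive semilinear parabolic system for which the scalar comparison / invariant-region principle applies directly (the off-diagonal reaction couplings $\xi[1-\phi(D)]R$ in the $S$-equation and $\alpha S+\eta\phi(D)AR$ in the $R$-equation are nonnegative when the other population is nonnegative). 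Either route works; I would present the decoupled version as the main argument and relegate the $\varepsilon$-shift details to a sentence.

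The main obstacle is mild but worth care: the parabolic comparison principle with Neumann conditions requires handling a potential minimum on $\partial U$, where one uses $\partial_{\mathrm n}(\text{variable})=0$ together with the Hopf lemma to rule out a boundary minimum where the normal derivative would be negative; and one must be careful that the cross-reaction terms are evaluated with the \emph{already-known} nonnegativity of the other components, which is why the ordering $D$ first, then $(P,A)$, then $(S,R)$ matters. The coupling $\xi[1-\phi(D)]R$ entering the $S$-equation is harmless precisely because $\phi<1$ (assumption~\eqref{eq:phi_def}), so this hypothesis must be invoked explicitly. No genuinely hard estimate is needed — the content is entirely the quasi-positivity bookkeeping plus a routine maximum-principle/Hopf argument on a finite time interval below $T_{\max}$.
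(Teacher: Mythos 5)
Your proposal is correct, and its skeleton (quasi-positivity check, $D$ handled first via the decoupled maximum principle, $(P,A)$ via the pointwise linear ODE system with nonnegative off-diagonal entries) coincides with the paper's; the difference lies in the tool used for the parabolic populations. The paper proves $S,R\ge 0$ (and re-derives $D\ge0$) by a Stampacchia-type energy argument: test each equation with the negative part $S_-$, use the local Lipschitz bound from Lemma~\ref{lem:local_lipschitz} together with $f_S(0,R,D,P,A)=\xi[1-\phi(D)]R\ge0$ to get $\frac{d}{dt}\|S_-(t)\|_{L^2}^2\le 2L_T\|S_-(t)\|_{L^2}^2$, and conclude $S_-\equiv0$ by Gr\"onwall; no Hopf lemma or boundary-point analysis is needed, since the Neumann condition only enters through the vanishing boundary term in the integration by parts. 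You instead run a pointwise first-touch argument with the $\varepsilon e^{\Lambda t}$ shift and the parabolic boundary-point (Hopf) lemma; this is an equally standard and valid route, at the cost of invoking the boundary-point lemma and of a small phrasing slip (at the first touch it is the time derivative of the \emph{shifted} variable, $\partial_t\tilde S\ge(\Lambda-L_MC)\varepsilon e^{\Lambda t}>0$, that yields the contradiction, not positivity of the reaction term itself). One point where your write-up is arguably tighter than the paper's: you insist on treating $(S,R)$ \emph{jointly} as a quasi-positive two-component system, whereas the paper proves $S\ge0$ using $R\ge0$ as input and then says ``the same argument'' gives $R\ge0$; your joint treatment (or, equivalently, running the negative-part estimate for $\|S_-\|_{L^2}^2+\|R_-\|_{L^2}^2$ simultaneously) is exactly what removes that apparent circularity. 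Both approaches buy the same theorem; the energy method is shorter to write rigorously, while your comparison-principle version is more elementary in the tools it quotes and generalizes more directly to invariant regions beyond the positive cone.
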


\begin{theorem}[Global existence and \texorpdfstring{$L^\infty$}{L-infinity} bounds]
\label{thm:global_wellposed}
Assume \eqref{eq:delta_def}--\eqref{eq:phi_def}, homogeneous Neumann boundary conditions \eqref{eq:BC_core},
and $\mathbf{u}_0\in\mathbf{X}_+$.
Then the mild solution of \eqref{eq:full_system} exists globally in time, i.e.\ $T_{\max}=\infty$, and is unique in
$C([0,\infty);\mathbf{X})$.
Moreover, for all $t\ge0$,
\begin{align}
\label{eq:global_bounds_D_PA}
\begin{aligned}
&0\le D(x,t)\le \|D_0\|_{L^\infty(U)}\quad \forall x\in U, \\[4pt]
&\sup_{t\ge0}\bigl(\|P(t)\|_{L^\infty(U)}+\|A(t)\|_{L^\infty(U)}\bigr)
\le \|P_0\|_{L^\infty(U)}+\|A_0\|_{L^\infty(U)}.
\end{aligned}
\end{align}
In addition, for each $T>0$ there exists $C_T<\infty$ such that
\begin{equation}
\label{eq:global_bounds_SR}
\sup_{0\le t\le T}\bigl(\|S(t)\|_{L^\infty(U)}+\|R(t)\|_{L^\infty(U)}\bigr)\le C_T.
\end{equation}
\end{theorem}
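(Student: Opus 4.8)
The plan is to run the standard analytic-semigroup scheme of Section~\ref{subsec:functional_setting}, feed in the a priori bounds of Section~\ref{subsec:structural_identities} for the decoupled blocks, and then close the argument with a single new $L^\infty$ estimate for the motile pair $(S,R)$. \emph{Step~1 (local solution, uniqueness, cone invariance).} By Lemma~\ref{lem:local_lipschitz} the reaction map $\mathcal G$ induces a locally Lipschitz Nemytskii map on $X$ (recall $X\hookrightarrow(L^\infty(U))^5$ componentwise), and $-\mathcal A$ generates the positive semigroup $\mathbb P(t)$ of \eqref{eq:semigroup_def_shifted} (analytic in the three parabolic components). Classical theory for semilinear evolution equations (variation of constants plus Banach fixed point) then yields, for each $\mathbf u_0\in X$, a unique maximal mild solution $\mathbf u\in C([0,T_{\max});X)$ in the sense of Definition~\ref{def:mild_sol}, together with the blow-up alternative: either $T_{\max}=\infty$, or $\limsup_{t\uparrow T_{\max}}\|\mathbf u(t)\|_X=\infty$. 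The nonlinearity is quasi-positive --- at $S=0$ the $S$-reaction equals $\xi[1-\phi(D)]R\ge0$, at $R=0$ it equals $\alpha S\ge0$, at $D=0$ it is $0$, and at $P=0$ (resp.\ $A=0$) it equals $\beta[1-\phi(D)]A\ge0$ (resp.\ $\theta\phi(D)P\ge0$) --- and $\mathbb P(t)$ preserves $X_+$; hence the positive cone is invariant (equivalently, Theorem~\ref{thm:positivity} applies after mollifying the data), so $\mathbf u(t)\in X_+$ on $[0,T_{\max})$. \emph{Step~2 (global bounds for $D$, $P$, $A$).} The $D$-equation is decoupled, so Proposition~\ref{prop:D_decay}(i) gives $0\le D(x,t)\le\|D_0\|_{L^\infty(U)}$, and Lemma~\ref{lem:PA_conservation} gives $0\le P,A$ with $\|P(t)\|_{L^\infty(U)}+\|A(t)\|_{L^\infty(U)}\le\|P_0\|_{L^\infty(U)}+\|A_0\|_{L^\infty(U)}=:C_A$; both hold on $[0,T_{\max})$, uniformly in $t$, and together they are \eqref{eq:global_bounds_D_PA}.

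\emph{Step~3 (a priori $L^\infty$ bound for $S,R$ --- the crux).} Regard $D$ and $A$ as given nonnegative functions obeying the Step-2 bounds, so that $(S,R)$ solves a two-component parabolic system with diagonal diffusion $\mathrm{diag}(d_S,d_R)$ and coefficients subject to $0\le\phi<1$, $\delta\ge0$, $A\le C_A$. Parabolic smoothing makes the solution strong for $t>0$, and the invariant-rectangle criterion for diagonal-diffusion systems (equivalently a Stampacchia truncation), applied with the $(x,t)$-dependent coefficients $D,A$ checked against their worst-case bounds, shows that $[0,M]^2$ is forward invariant once the reaction points inward on each face: on $S=M$ one has $S$-reaction $\le M\bigl[\lambda_S+\xi-\lambda_S M/K\bigr]\le0$ provided $M\ge K(\lambda_S+\xi)/\lambda_S$; on $R=M$ one has $R$-reaction $\le M\bigl[\lambda_R+\alpha+\eta C_A-\lambda_R M/K\bigr]\le0$ provided $M\ge K(\lambda_R+\alpha+\eta C_A)/\lambda_R$; and on $S=0$ and $R=0$ quasi-positivity points inward. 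Taking $M=M^\star$ equal to the maximum of these two thresholds and of $\|S_0\|_{L^\infty(U)}$, $\|R_0\|_{L^\infty(U)}$ gives $\|S(t)\|_{L^\infty(U)}+\|R(t)\|_{L^\infty(U)}\le2M^\star$ for all $t\in[0,T_{\max})$ --- a time-uniform bound, which a fortiori yields \eqref{eq:global_bounds_SR}. (Alternatively: upon summation the $\pm\alpha S$ and $\mp\xi[1-\phi(D)]R$ terms cancel, and because $S,R\ge0$ the positive part of each reaction is at most linear in $(S,R)$ --- the only superlinear contributions, $-\lambda_S S(S+R)/K$ and $-\lambda_R R(S+R)/K$, carry the dissipative sign --- so a linear parabolic comparison for the resulting cooperative system also closes the bound, while integrating the summed equation over $U$ records the accompanying $L^1$ bound on $S+R$.)

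\emph{Step~4 (globalization and conclusion).} Steps~2--3 bound $\|\mathbf u(t)\|_X$ on $[0,T_{\max})$, so the blow-up alternative forces $T_{\max}=\infty$; uniqueness in $C([0,\infty);X)$ is part of Step~1; and the displayed inequalities \eqref{eq:global_bounds_D_PA}--\eqref{eq:global_bounds_SR} are exactly what Steps~2--3 produced. The only delicate point is Step~3, and I expect the main obstacle to be verifying that the interconversion and promotion couplings do not break the absorption in the $(S,R)$-block. The resolution is that these couplings enter only through \emph{linearly} growing terms ($\xi R$ in the $S$-equation, $\alpha S+\eta\phi(D)AR$ in the $R$-equation), which the negative logistic quadratic dominates at large amplitude, so the rectangle $[0,M^\star]^2$ is absorbing; one further has to keep in mind that $D$ and $A$ enter the $(S,R)$-block as genuinely $(x,t)$-dependent coefficients, so any comparison or invariant-region reasoning must use their worst-case bounds rather than pointwise values, a substitution rendered harmless by the monotonicity of $\phi$ and $\delta$ and by the conservation bound $A\le C_A$.
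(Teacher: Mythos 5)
Your proposal is correct, and it reaches the theorem by the same overall scaffolding as the paper (local Lipschitz nonlinearity plus the positive semigroup giving a maximal mild solution with a blow-up alternative; Proposition~\ref{prop:D_decay} and Lemma~\ref{lem:PA_conservation} giving \eqref{eq:global_bounds_D_PA}), but the crucial $(S,R)$ estimate is obtained by a genuinely different device. The paper tests the $S$- and $R$-equations with $pS^{p-1}$ and $pR^{p-1}$, drops the diffusion terms, applies Young's inequality to the coupling terms, runs Gr\"onwall on $W_p=\|S\|_{L^p}^p+\|R\|_{L^p}^p$, and lets $p\to\infty$; this yields only a locally-in-time bound $C_T$ (growing exponentially in $T$), which is all that is needed to defeat the blow-up alternative. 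You instead build the absorbing rectangle $[0,M^\star]^2$ and check the inward-pointing condition on each face with the $(x,t)$-dependent coefficients $D,A$ replaced by their worst-case bounds ($\phi<1$, $\delta\ge0$, $A\le C_A$); your face computations are right, and the conclusion is actually stronger than the paper's, namely a \emph{time-uniform} $L^\infty$ bound, which a fortiori gives \eqref{eq:global_bounds_SR}. The trade-off is that the invariant-rectangle (Chueh--Conley--Smoller / Stampacchia truncation) route requires invoking invariant-region machinery for diagonal-diffusion systems with merely bounded measurable coefficients acting on mild solutions, so you should either run the truncation testing explicitly (as the paper does for positivity, testing with negative parts) or justify strict inwardness by slightly enlarging the rectangle; your parenthetical alternative (domination by a linear cooperative system, since the only superlinear terms are dissipative) is closer in spirit to the paper's estimate and likewise yields an exponential-in-time bound. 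Either way the globalization step and uniqueness go through exactly as in the paper.
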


\begin{remark}[Classical solutions]\citep{ladyzenskaja_linear_1968,rothe_global_1984,gilbarg_elliptic_2001}
\label{rem:classical_solutions}
Under standard H\"older regularity and Neumann compatibility conditions on $(S_0,R_0,D_0)$ (and H\"older regularity of $(P_0,A_0)$),
the global mild solution in Theorem~\ref{thm:global_wellposed} upgrades to a global classical solution by parabolic regularity.
We do not repeat the classical assumptions here; they are stated explicitly whenever classical regularity is invoked.
\end{remark}

\subsection{Long-time reduction and global attractivity of the limiting kinetics}
\label{subsec:main_longtime}

A key structural feature of \eqref{eq:full_system} is that the signaling field $D$ solves a decoupled damped heat equation,
hence decays exponentially, while the non-motile subsystem satisfies a pointwise conservation law.
Consequently, all $D$-mediated terms vanish asymptotically and the dynamics approach a limiting $(S,R)$-system obtained by
setting $D\equiv0$, $\phi(D)\equiv 0$, and $\delta(D)\equiv0$ in the population equations.
This motivates the reduced kinetics
\begin{equation}
\label{eq:SR_reduced}
\begin{cases}
\dfrac{dS}{dt} = f_{S}(S,R)\coloneqq \lambda_{S}S\Bigl(1-\dfrac{S+R}{K}\Bigr)-\alpha S+\xi R, \\[8pt]
\dfrac{dR}{dt} = f_{R}(S,R)\coloneqq \lambda_{R}R\Bigl(1-\dfrac{S+R}{K}\Bigr)+\alpha S-\xi R,
\end{cases}
\end{equation}
which provides the reference homogeneous equilibrium for the mode-wise stability analysis in subsequent sections.

\begin{theorem}[Long-time reduction to the limiting $(S,R)$-kinetics]
\label{thm:longtime_reduction}
Assume \eqref{eq:delta_def}--\eqref{eq:phi_def} and let $\mathbf{u}=(S,R,D,P,A)$ be the global mild solution of
\eqref{eq:full_system} given by Theorem~\ref{thm:global_wellposed}, with $\mathbf{u}_0\in\mathbf{X}_+$.
Then:
\begin{enumerate}[label=(\roman*), leftmargin=*]
\item The signal decays exponentially in $L^2(U)$:
\[
\|D(t)\|_{L^2(U)}\le e^{-\gamma_d t}\|D_0\|_{L^2(U)}\qquad \forall t\ge0.
\]
In particular, $\phi(D(\cdot,t))\to0$ and $\delta(D(\cdot,t))\to0$ as $t\to\infty$ in any topology in which $D(\cdot,t)\to0$.
\item Consequently, the $D$-dependent terms in the $(S,R)$-equations are asymptotically vanishing:
for every $T>0$,
\begin{equation*}
\begin{aligned}
\sup_{t\ge T}\bigg(
&\|\delta(D(t))S(t)\|_{L^\infty(U)}
+\|(1-\phi(D(t)))-1\|_{L^\infty(U)}\,\|R(t)\|_{L^\infty(U)} \\[4pt]
+&\|\phi(D(t))A(t)\|_{L^\infty(U)}\,\|R(t)\|_{L^\infty(U)}
\Big)\longrightarrow 0
\end{aligned}
\end{equation*}
as $T\to\infty$.
\end{enumerate}
In particular, the long-time dynamics of \eqref{eq:full_system} are governed by the limiting kinetics \eqref{eq:SR_reduced}.
\end{theorem}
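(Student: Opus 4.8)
The plan is to treat statement (i) as essentially a restatement of Proposition~\ref{prop:D_decay}, then to strengthen the decay of $D$ to the sup norm, then to promote the time-local $L^\infty$ bound on $(S,R)$ to a uniform-in-time one, and finally to combine these to obtain (ii) and read off the limiting kinetics. \textbf{Step 1 (statement (i) and $L^\infty$-decay of $D$).} The $L^2$ bound in (i) is exactly \eqref{eq:D_L2_decay}. Since the reduction needs the $D$-mediated coefficients to vanish in $L^\infty$, I would strengthen this: the function $\tilde D:=e^{\gamma_d t}D$ solves the pure Neumann heat equation $\partial_t\tilde D=d_D\Delta\tilde D$, $\partial_{\mathrm n}\tilde D=0$, with data $D_0$, so the maximum principle of Proposition~\ref{prop:D_decay}(i) applied to $\tilde D$ gives $0\le D(x,t)\le e^{-\gamma_d t}\|D_0\|_{L^\infty(U)}$. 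Using $\delta(D)\le(\delta_0/K_D)D$ from \eqref{eq:delta_def} and $0\le\phi(D)\le L_\phi D$ from \eqref{eq:phi_def} (global Lipschitz, $\phi(0)=0$, with $L_\phi$ the Lipschitz constant), one obtains $\|\delta(D(t))\|_{L^\infty(U)}+\|\phi(D(t))\|_{L^\infty(U)}\le C_0\,e^{-\gamma_d t}\|D_0\|_{L^\infty(U)}$ for $C_0=C_0(\delta_0,K_D,L_\phi)$; both vanish as $t\to\infty$, which is (i).

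\textbf{Step 2 (uniform $L^\infty$ bound on $S,R$).} This is the only genuinely structural step. Working with the classical solution of Remark~\ref{rem:classical_solutions} (for general $\mathbf u_0\in\mathbf X_+$ the bound transfers to the mild solution of Theorem~\ref{thm:global_wellposed} by approximating the data), I claim the rectangle $[0,M_0]^2$ is positively invariant for $(S,R)$, where $a_0:=\|P_0\|_{L^\infty(U)}+\|A_0\|_{L^\infty(U)}$ and $M_0:=\max\{\|S_0\|_{L^\infty(U)},\ \|R_0\|_{L^\infty(U)},\ K(\lambda_S+\xi)/\lambda_S,\ K(\lambda_R+\alpha+\eta a_0)/\lambda_R\}$. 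Because the diffusion matrix $\mathrm{diag}(d_S,d_R)$ is diagonal and the data Neumann, it suffices to check the standard invariant-rectangle sign conditions (the reaction field must point into the rectangle along its faces; the Laplacian respects interior extrema, while the Hopf lemma with $\partial_{\mathrm n}=0$ rules out genuine boundary maxima): on $\{S=0\}$ and $\{R=0\}$ the reaction is nonnegative, which is precisely the quasi-positivity used in Theorem~\ref{thm:positivity}; on $\{S=M_0\}$, using $R\ge0$ and $0\le\phi(D)<1$, the $S$-reaction is $\le M_0(\lambda_S+\xi-\lambda_S M_0/K)\le 0$; on $\{R=M_0\}$, using $S\ge0$, $0\le\phi(D)<1$, and $\|A(t)\|_{L^\infty(U)}\le a_0$ from Lemma~\ref{lem:PA_conservation}, the $R$-reaction is $\le M_0(\lambda_R+\alpha+\eta a_0-\lambda_R M_0/K)\le 0$. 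Hence $0\le S(x,t),R(x,t)\le M_0$ for all $(x,t)$, upgrading \eqref{eq:global_bounds_SR} to a uniform bound.

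\textbf{Step 3 (conclusion of (ii) and the reduction).} Put $M:=M_0+a_0$. Using $\|(1-\phi(D(t)))-1\|_{L^\infty(U)}=\|\phi(D(t))\|_{L^\infty(U)}$, Step 2, and Lemma~\ref{lem:PA_conservation}, the three terms in (ii) are bounded by $\bigl(\|\delta(D(t))\|_{L^\infty(U)}+(1+M)\|\phi(D(t))\|_{L^\infty(U)}\bigr)M$, which by Step 1 is $\le C\,e^{-\gamma_d t}$; hence $\sup_{t\ge T}(\cdots)\le C\,e^{-\gamma_d T}\to 0$ as $T\to\infty$, which is (ii). Rewriting the $S$- and $R$-equations of \eqref{eq:full_system} as $\partial_t S=d_S\Delta S+f_S(S,R)+r_S$, $\partial_t R=d_R\Delta R+f_R(S,R)+r_R$ with $f_S,f_R$ the kinetics of \eqref{eq:SR_reduced} and remainder $r=(r_S,r_R)$ satisfying $\sup_{t\ge T}\|r(t)\|_{L^\infty(U)}\to 0$ exponentially, exhibits \eqref{eq:full_system} as an asymptotically autonomous, exponentially small perturbation of the reaction--diffusion system carrying the kinetics \eqref{eq:SR_reduced}; this is the precise sense of the final assertion and the form used in the subsequent mode-wise analysis.

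\textbf{Main obstacle.} Everything except Step 2 is bookkeeping around Propositions already proved. The nontrivial point is making \eqref{eq:global_bounds_SR} uniform in $t$: the invariant-rectangle argument works cleanly precisely because $\phi<1$ caps the $A$-driven promotion of $R$ and Lemma~\ref{lem:PA_conservation} caps $A$ itself, so the logistic nonlinearity controls the sign of the reaction on the far faces of $[0,M_0]^2$; without such a uniform bound the factor $\|R(t)\|_{L^\infty(U)}$ in (ii) could a priori defeat the exponential smallness of $\|\phi(D(t))\|_{L^\infty(U)}$. (If one wanted the stronger statement that trajectories of \eqref{eq:full_system} converge to those of the limiting reaction--diffusion system, one would additionally invoke the theory of asymptotically autonomous semiflows; for the theorem as stated, the explicit estimate in (ii) suffices.)
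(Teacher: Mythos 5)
Your proposal is correct, but it takes a genuinely different (and in places stronger) route than the paper's proof. For part (i), the paper works from Proposition~\ref{prop:D_decay} only: it has the $L^2$ decay and the $L^\infty$ bound, and to make $\phi(D(t)),\delta(D(t))$ small in $L^\infty$ it passes through a.e.\ convergence along subsequences, dominated convergence (giving only $L^p$ decay for finite $p$), and finally invokes parabolic smoothing under the extra hypothesis $D_0\in C(\overline U)$ (see the remark following the proof). Your observation that $e^{\gamma_d t}D$ solves the pure Neumann heat equation gives the uniform bound $0\le D(x,t)\le e^{-\gamma_d t}\|D_0\|_{L^\infty(U)}$ directly from the maximum principle, which yields exponential $L^\infty$ decay of $\delta(D)$ and $\phi(D)$ with no subsequences and no continuity assumption on $D_0$ --- a cleaner argument. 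For part (ii), the paper estimates the three products using the bound on $A$ from Lemma~\ref{lem:PA_conservation} together with the time-local bounds $C_T$ on $\|S(t)\|_{L^\infty},\|R(t)\|_{L^\infty}$ from Theorem~\ref{thm:global_wellposed}; since those bounds are only over compact intervals (and the Gr\"onwall construction allows them to grow in $T$), the claimed $\sup_{t\ge T}$ statement rests on an implicit uniformity. Your Step 2 supplies exactly the missing ingredient: a uniform-in-time $L^\infty$ bound on $(S,R)$ via an invariant rectangle $[0,M_0]^2$, with the face conditions checked uniformly in the non-autonomous forcing using $0\le\phi(D)<1$, $D\ge 0$, and $\|A(t)\|_{L^\infty}\le a_0$. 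This both closes the argument and upgrades the conclusion of (ii) to an explicit $O(e^{-\gamma_d T})$ rate. The only points where you lean on standard-but-unstated machinery are the invariance theorem for rectangles under diagonal diffusion with Neumann data and non-autonomous bounded coefficients (Chueh--Conley--Smoller type; strictly one should verify it with a slightly enlarged rectangle $[0,M_0+\varepsilon]^2$ or a strict-inequality approximation) and the transfer of the bound from classical to mild solutions by approximation of the data; both are routine. In short: the paper buys (ii) cheaply but softly from Proposition~\ref{prop:D_decay} plus compact-interval bounds, whereas your route costs one extra structural lemma (the invariant rectangle) and in return delivers uniform bounds, an explicit exponential rate, and independence from the regularity caveat on $D_0$.
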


\begin{theorem}[Global attractivity of the coexistence equilibrium of \eqref{eq:SR_reduced}]
\label{thm:global_attract_reduced}
Assume $K>0$ and $\alpha+\xi>0$. Then the reduced kinetics \eqref{eq:SR_reduced} admit a unique coexistence equilibrium
\begin{equation}
\label{eq:Kstar_reduced}
\mathbb{K}^{*}=(S^{*},R^{*})
=\left(\frac{\xi K}{\alpha+\xi},\ \frac{\alpha K}{\alpha+\xi}\right),
\end{equation}
which lies in the positive quadrant. Moreover, $\mathbb{K}^{*}$ is globally asymptotically stable for all initial data
$(S(0),R(0))$ with $S(0)\ge0$, $R(0)\ge0$, and $S(0)+R(0)>0$.
\end{theorem}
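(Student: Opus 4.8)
The plan is to exploit the structure of \eqref{eq:SR_reduced}: the logistic nonlinearity enters only through the total mass $N:=S+R$, and the linear conversion terms $-\alpha S+\xi R$ and $\alpha S-\xi R$ cancel upon adding the two equations, so $N$ satisfies a self-contained (nonautonomous but tightly controlled) logistic equation; once $N(t)$ is pinned down, the residual dynamics collapse to a single scalar linear nonautonomous ODE for $S$. \emph{Step 1 (positivity, a priori bounds, global existence).} The reduced vector field is quasi-positive — on $\{S=0\}$ one has $\dot S=\xi R\ge 0$ and on $\{R=0\}$ one has $\dot R=\alpha S\ge 0$ — so the closed first quadrant is forward invariant; hence $S(t),R(t)\ge 0$ on the maximal existence interval, and $N(0)=S(0)+R(0)>0$. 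Summing the equations gives $\dot N=(1-N/K)(\lambda_S S+\lambda_R R)$; writing $\lambda_S S+\lambda_R R=\mu(t)N$ with $\mu(t):=\lambda_S\tfrac{S}{N}+\lambda_R\tfrac{R}{N}\in[\lambda_{\min},\lambda_{\max}]$, $\lambda_{\min}:=\min\{\lambda_S,\lambda_R\}>0$, turns this into $\dot N=\mu(t)\,N\,(1-N/K)$. Since $\dot N\le 0$ whenever $N\ge K$ and $\dot N\ge 0$ whenever $0<N\le K$, a barrier argument confines $N(t)$ to the compact interval $[\,\min\{N(0),K\},\ \max\{N(0),K\}\,]\subset(0,\infty)$. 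Boundedness of $N$ (hence of $S,R$) together with the ODE continuation criterion gives $T_{\max}=\infty$.

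\emph{Step 2 ($N(t)\to K$ and uniqueness of $\mathbb{K}^{*}$).} With $E:=K-N$ one has $\dot E=-\tfrac{\mu(t)N(t)}{K}\,E$ and $\tfrac{\mu(t)N(t)}{K}\ge\kappa:=\tfrac{\lambda_{\min}\min\{N(0),K\}}{K}>0$ by Step 1, so Gr\"onwall yields $|N(t)-K|\le|N(0)-K|\,e^{-\kappa t}\to 0$, i.e.\ $N(t)\to K$ exponentially. For uniqueness: at any equilibrium, adding the equations forces $(1-N/K)(\lambda_S S+\lambda_R R)=0$; at an equilibrium with $N>0$ the second factor is positive, hence $N=K$, and then $\dot S=0$ reads $\alpha S=\xi R$, which together with $S+R=K$ determines $(S,R)$ uniquely as the point $\mathbb{K}^{*}$ in \eqref{eq:Kstar_reduced} (well defined since $\alpha+\xi>0$, and interior to the positive quadrant since additionally $\alpha,\xi,K>0$).

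\emph{Step 3 (convergence to $\mathbb{K}^{*}$ and stability).} Substituting $R=N-S$ into the $S$-equation gives the scalar linear nonautonomous ODE $\dot S=a(t)S+b(t)$ with $a(t)=\lambda_S(1-N(t)/K)-(\alpha+\xi)\to-(\alpha+\xi)<0$ and $b(t)=\xi N(t)\to\xi K$; equivalently $y:=S-S^{*}$ solves $\dot y=a(t)y+r(t)$ with $r(t)=(\xi-\lambda_S S^{*}/K)\,(N(t)-K)\to 0$ (exponentially, by Step 2). The elementary convergence lemma for such equations — for $t$ large $a(t)\le-(\alpha+\xi)/2$, then variation of parameters combined with $\limsup_{t\to\infty}|r(t)|=0$ — yields $y(t)\to 0$, hence $S(t)\to S^{*}$ and $R(t)=N(t)-S(t)\to R^{*}$. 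Since all the estimates above are explicit in $|N(0)-K|$ and $|S(0)-S^{*}|$, $\mathbb{K}^{*}$ is Lyapunov stable (alternatively, the Jacobian of \eqref{eq:SR_reduced} at $\mathbb{K}^{*}$ has trace $-\tfrac{\lambda_S S^{*}+\lambda_R R^{*}}{K}-(\alpha+\xi)<0$ and determinant $(\alpha+\xi)\tfrac{\lambda_S S^{*}+\lambda_R R^{*}}{K}>0$, hence is Hurwitz); combined with global attractivity this proves global asymptotic stability.

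\emph{Main obstacle.} There is no single hard estimate: the decisive step is the structural observation that $N=S+R$ decouples into a controlled logistic equation, after which the argument is a chain of scalar barrier/Gr\"onwall estimates plus the standard lemma on $\dot y=a(t)y+r(t)$ with $a(t)\to a_\infty<0$, $r(t)\to 0$. The only points that need care are (i) the a priori confinement of $N$ to a compact subset of $(0,\infty)$, which is what upgrades $N(t)\to K$ to an exponential rate and underpins Lyapunov stability, and (ii) controlling the nonautonomous forcing $r(t)$ in the $S$-equation, which becomes routine once $N(t)\to K$ is known.
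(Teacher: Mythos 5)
Your proof is correct and follows essentially the same route as the paper: forward invariance of the quadrant, reduction of the total mass $N=S+R$ to a logistic-type equation with $N(t)\to K$, and then treating $S$ (equivalently $S-S^*$) as a scalar linear nonautonomous ODE with asymptotically vanishing forcing, solved by variation of constants. Your version is slightly more complete in that it extracts an explicit exponential rate for $N\to K$ and explicitly settles uniqueness of the coexistence equilibrium and Lyapunov stability, but the underlying argument coincides with the paper's.
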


The reduction strategy pursued here---combining dissipative/decoupled components with balance-type identities to obtain an effective low-dimensional kinetics---is in the spirit of recent structured population analyses; cf.\ \citep{liang_global_2025} for a balance-law reduction and subsequent stability classification.
Figure~\ref{fig:phase_portrait} shows the phase portrait of \eqref{eq:SR_reduced}. The vector fields of \eqref{eq:SR_reduced} point toward $\mathbb K^*$, supporting the global stability claim in Theorem~\ref{thm:global_attract_reduced}.

\begin{figure}[htbp]
    \centering
    \includegraphics[width=0.75\linewidth]{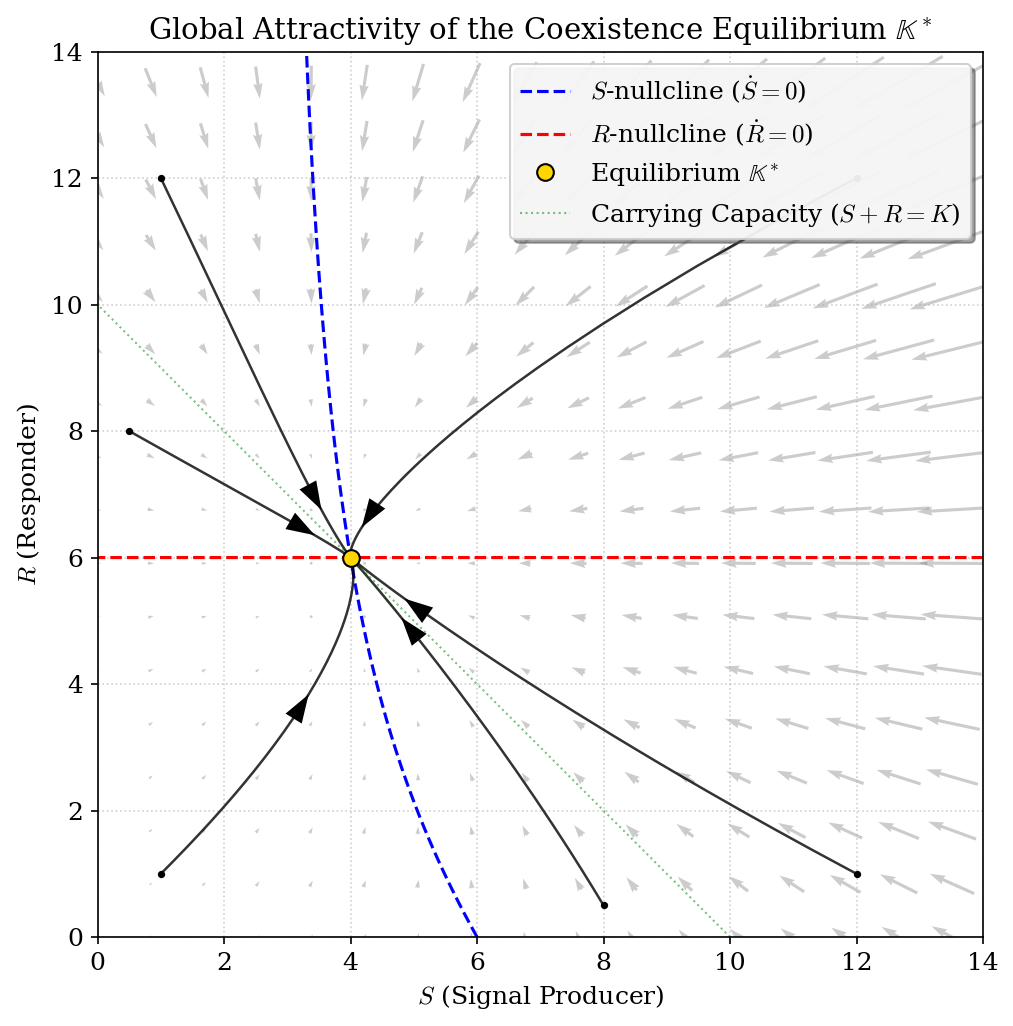}
    \caption{\textbf{Phase portrait of \eqref{eq:SR_reduced}}. The blue and red dashed lines represent the $S$- and $R$-nullclines, respectively. The green dashed line represents the one-dimensional invariant manifold $S+R=K$, where the tumor population reaches the carrying capacity $K$. The black solid curves are representative trajectories starting from different initial positions. The gray arrows indicate the direction and relative magnitude of the vector field of \eqref{eq:SR_reduced}, and they all point toward the unique equilibrium $\mathbb K^*$ (yellow dot), confirming the global stability of $\mathbb{K}^*$. Parameter values include $K=10$, $\alpha=0.6$, $\xi=0.4$, $\lambda_S=1.5$, $\lambda_R=1.0$.}
    \label{fig:phase_portrait}
\end{figure}

\begin{remark}[Equilibrium composition]
At the coexistence equilibrium \eqref{eq:Kstar_reduced},
\[
\frac{S^*}{R^*}=\frac{\xi}{\alpha},
\]
so the long-time composition is determined by the relative conversion rates.
\end{remark}

\subsection{Absence of diffusion-driven (Turing) instability in the base reaction--diffusion subsystem}
\label{subsec:main_noturing}

We show that the coexistence equilibrium of the limiting kinetics \eqref{eq:SR_reduced} cannot be destabilized by diffusion
in the base reaction--diffusion subsystem.
In contrast to classical activator--inhibitor kinetics, the Jacobian at the coexistence equilibrium has strictly negative
diagonal entries, so diffusion only increases damping of nonconstant modes.

\begin{theorem}[No diffusion-driven (Turing) instability at the coexistence equilibrium]
\label{thm:no_turing}
Assume $K>0$ and $\alpha+\xi>0$ so that \eqref{eq:SR_reduced} admits the coexistence equilibrium
$\mathbb{K}^*=(S^*,R^*)$ given by \eqref{eq:Kstar_reduced}.
Consider the reaction--diffusion system
\begin{equation}
\label{eq:RD_SR}
\partial_t S = d_S\Delta S + f_S(S,R),\qquad
\partial_t R = d_R\Delta R + f_R(S,R),
\end{equation}
posed on $U$ with homogeneous Neumann boundary conditions.
Then, for any diffusion coefficients $d_S,d_R>0$, the homogeneous steady state
\[
(S(x,t),R(x,t))\equiv (S^*,R^*)
\]
is linearly asymptotically stable with respect to all nonconstant Neumann Laplacian modes (i.e.\ all modes $k\ge2$ with
$\lambda_k>0$). In particular, no diffusion-driven (Turing) instability occurs at $\mathbb{K}^*$.
\end{theorem}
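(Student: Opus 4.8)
The plan is to compute the Jacobian $J^* := DF(\mathbb{K}^*)$ of the reduced kinetics \eqref{eq:SR_reduced} at the coexistence equilibrium, verify that $\operatorname{tr} J^*<0$ and $\det J^*>0$ (so the kinetics themselves are stable), and then show that the mode matrix $\mathcal{A}_k = J^* - \lambda_k\,\mathrm{diag}(d_S,d_R)$ retains $\operatorname{tr}\mathcal{A}_k<0$ and $\det\mathcal{A}_k>0$ for every $\lambda_k>0$ and all $d_S,d_R>0$. The key observation, as the paragraph preceding the theorem hints, is that at $\mathbb{K}^*$ one has $S^*+R^* = K$, so the logistic factor $1-(S+R)/K$ vanishes at the equilibrium. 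Differentiating $f_S = \lambda_S S(1-(S+R)/K) - \alpha S + \xi R$ gives $\partial_S f_S = \lambda_S(1-(S^*+R^*)/K) - \lambda_S S^*/K - \alpha = -\lambda_S S^*/K - \alpha < 0$ and $\partial_R f_S = -\lambda_S S^*/K + \xi$; similarly $\partial_R f_R = -\lambda_R R^*/K - \xi < 0$ and $\partial_S f_R = -\lambda_R R^*/K + \alpha$. So the diagonal entries $a := \partial_S f_S$ and $d := \partial_R f_R$ are strictly negative.

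First I would record $\operatorname{tr} J^* = a + d < 0$ immediately. Next I would compute $\det J^* = ad - bc$ with $b := \partial_R f_S = \xi - \lambda_S S^*/K$, $c := \partial_S f_R = \alpha - \lambda_R R^*/K$. Expanding,
\[
\det J^* = \Bigl(\tfrac{\lambda_S S^*}{K}+\alpha\Bigr)\Bigl(\tfrac{\lambda_R R^*}{K}+\xi\Bigr) - \Bigl(\xi - \tfrac{\lambda_S S^*}{K}\Bigr)\Bigl(\alpha - \tfrac{\lambda_R R^*}{K}\Bigr).
\]
The cross terms $\xi\alpha$ cancel and the surviving terms organize into $\det J^* = \tfrac{\lambda_S S^*}{K}\bigl(\tfrac{\lambda_R R^*}{K}+\xi+\alpha\bigr) + \alpha\cdot\tfrac{\lambda_R R^*}{K} + \xi\cdot\tfrac{\lambda_R R^*}{K} + \alpha\xi$ — wait, I should just carry out the cancellation carefully; the point is that every surviving term is a product of nonnegative quantities, at least one of which is strictly positive (e.g.\ the $\tfrac{\lambda_S S^*}{K}\cdot\tfrac{\lambda_R R^*}{K}$ term), giving $\det J^* > 0$. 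Then for the mode matrix, $\operatorname{tr}\mathcal{A}_k = (a - \lambda_k d_S) + (d - \lambda_k d_R) < 0$ trivially, and $\det\mathcal{A}_k = (a-\lambda_k d_S)(d-\lambda_k d_R) - bc = \det J^* - \lambda_k(d_R a + d_S d) + \lambda_k^2 d_S d_R$. Since $a,d<0$ we have $-\lambda_k(d_R a + d_S d) > 0$, so $\det\mathcal{A}_k = \det J^* + (\text{positive}) + \lambda_k^2 d_S d_R > 0$ for all $\lambda_k>0$. With $\operatorname{tr}\mathcal{A}_k<0$ and $\det\mathcal{A}_k>0$, both eigenvalues of the $2\times2$ matrix $\mathcal{A}_k$ have negative real part, hence every nonconstant mode is linearly asymptotically stable and no Turing instability occurs.

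The only genuine subtlety — not really an obstacle — is making the sign of $\det J^*$ transparent without appealing to the explicit values $S^* = \xi K/(\alpha+\xi)$, $R^* = \alpha K/(\alpha+\xi)$; I expect the cleanest route is the abstract one above (sign-definiteness of $a$, $d$ forces the mode determinant to stay positive once $\det J^*>0$ is known), so the whole argument reduces to the single algebraic check that $\det J^*>0$. If one prefers, substituting the explicit equilibrium values gives $\lambda_S S^*/K = \lambda_S\xi/(\alpha+\xi)$ and $\lambda_R R^*/K = \lambda_R\alpha/(\alpha+\xi)$, after which $\det J^*>0$ is a one-line positivity statement. I would also remark that stability of the kinetics themselves ($\lambda_1 = 0$ mode) is the $\lambda_k=0$ case of the same computation, consistent with Theorem~\ref{thm:global_attract_reduced}.
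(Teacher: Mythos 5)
Your proposal follows essentially the same route as the paper's proof: strictly negative diagonal entries $a,d$ of the Jacobian at $\mathbb{K}^*$ (using $S^*+R^*=K$), the kinetic conditions $\operatorname{tr}J^*<0$ and $\det J^*>0$, the observation that $-\lambda_k(d_R a + d_S d)>0$ keeps the mode determinant positive for every $\lambda_k>0$, and the $2\times2$ trace/determinant (Routh--Hurwitz) conclusion. One small correction to your determinant expansion: the product term $\bigl(\tfrac{\lambda_S S^*}{K}\bigr)\bigl(\tfrac{\lambda_R R^*}{K}\bigr)$ actually cancels rather than survives; the surviving terms sum to $(\alpha+\xi)\bigl(\tfrac{\lambda_S S^*}{K}+\tfrac{\lambda_R R^*}{K}\bigr)=\xi\lambda_S+\alpha\lambda_R>0$, which is exactly the value the paper records, so the conclusion stands.
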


\begin{remark}[Sign mechanism in the dispersion relation]
\label{rem:dispersion_mechanism_noturing}
In classical two-species Turing systems, diffusion can destabilize a linearly stable equilibrium because mixed signs in the Jacobian
interact with unequal diffusion and can force the constant term in the dispersion polynomial to become negative for some $\lambda_k>0$.
Here, the strict negativity of the diagonal derivatives \eqref{eq:diag_signs_noturing} implies that diffusion only increases damping:
the $\lambda_k$-coefficient in $a_0(\lambda_k)$ is nonnegative and the quadratic \eqref{eq:a1a0_noturing} remains positive for all $\lambda_k>0$. 
\end{remark}

\subsection{Directionality principle for chemotaxis with a diffusive signal}
\label{subsec:main_directionality}

We now incorporate chemotaxis via the diffusive chemoattractant $c(x,t)$ introduced in Section~\ref{subsec:chemotaxis_extensions_overview}.
Our main message is a directionality principle: in the one-way damped regime, chemotactic coupling does not alter the
population spectrum and patterns are suppressed; in contrast, two-way feedback can induce effective cross-diffusion and recover
Turing-type instabilities. 
Our mode-wise stability analysis follows the classical spectral reduction under Neumann eigenmodes
and yields explicit algebraic criteria (trace/determinant or Routh--Hurwitz type).
For complementary developments on explicit algebraic--spectral thresholds and their robustness under
time discretization, see \citep{wang_algebraicspectral_2026}.
Throughout we work under homogeneous Neumann boundary conditions. Figure~\ref{fig:dispersion} distinguishes the dispersion curves under the unidirectional and bidirectional feedback loop. The bidirectional feedback contains the self-reinforcing ingredients required for pattern formation, manifesting as positive eigenvalues in the dispersion curve.

\begin{figure}[hbtp]
    \centering
    \includegraphics[width=0.75\linewidth]{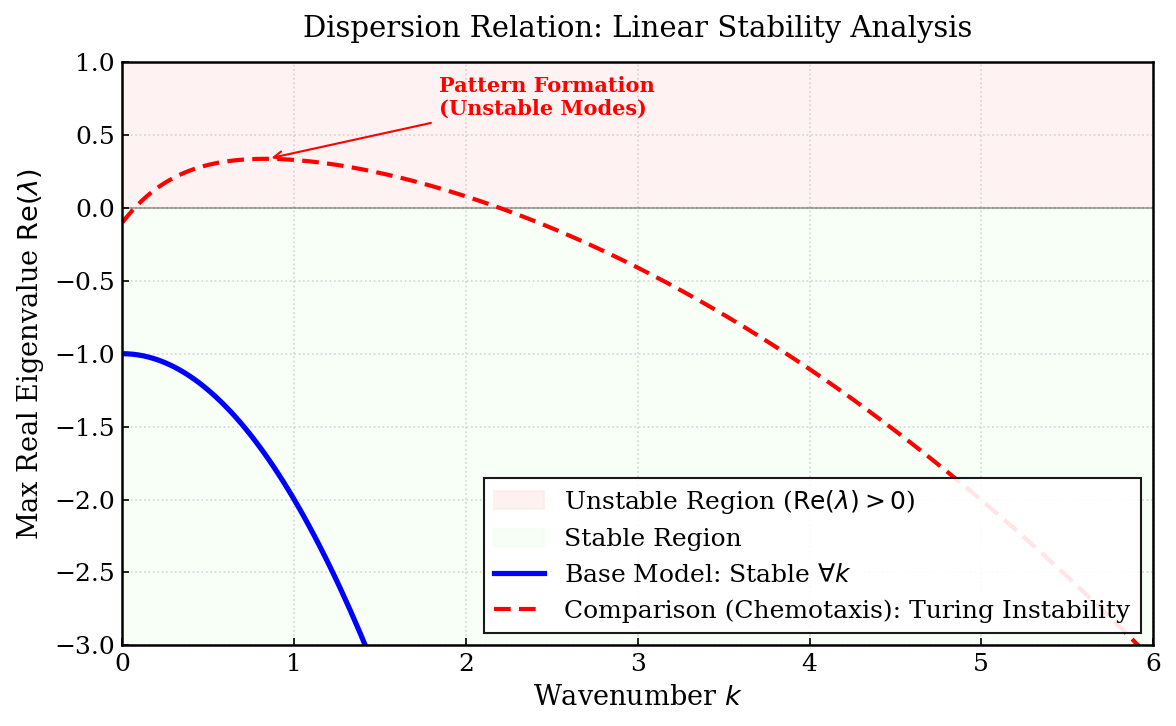}
    \caption{\textbf{Comparison of dispersion relations under base model and the chemotaxis extended model.} In the base model, the tumor population $(S,R)$ responds to the signal but do not regulate it, forming a unidirectional feedback loop where all spatial perturbations decay under damping structures. In the extended model, the signal is generated by the population, closing the bidirectional feedback loop required for pattern formation. The pattern formation requires the maximal eigenvalue to be positive in the dispersion relation.}
    \label{fig:dispersion}
\end{figure}

\subsubsection{One-way damped coupling: pattern suppression}
\label{subsubsec:main_oneway}

Consider the one-way damped chemotaxis model
\begin{equation}
\label{eq:oneway_damped_c_main}
\begin{cases}
\partial_t S = d_S\Delta S - \chi_S\nabla\cdot(S\nabla c) + f_S(S,R),\\[2pt]
\partial_t R = d_R\Delta R - \chi_R\nabla\cdot(R\nabla c) + f_R(S,R),\\[2pt]
\partial_t c = d_c\Delta c + \mathcal{Q}(c),
\end{cases}
\qquad \partial_{\mathrm n}S=\partial_{\mathrm n}R=\partial_{\mathrm n}c=0,
\end{equation}
where $\mathcal{Q}$ is $C^1$ and $\mathcal{Q}'(c^*)<0$ at the relevant homogeneous equilibrium $c^*$.
(As discussed in Section~\ref{subsec:chemotaxis_extensions_overview}, this template also covers the hybrid forcing
$\partial_t c=d_c\Delta c+\kappa A-\rho c$ since $A$ evolves independently of $(S,R)$ in the one-way setting and the $c$-equation
is linearly damped.)

\begin{theorem}[One-way damped chemotaxis suppresses mode-wise pattern formation]
\label{thm:oneway_suppression}
Assume that \eqref{eq:oneway_damped_c_main} admits a spatially homogeneous equilibrium
$\mathbf{u}^*=(S^*,R^*,c^*)$ with $S^*,R^*>0$ and $\mathcal{Q}'(c^*)<0$.
Let $\mathcal{J}_F\coloneqq D(f_S,f_R)(S^*,R^*)$ and assume kinetic stability:
\begin{equation}
\label{eq:kinetic_stability_oneway}
\operatorname{tr}(\mathcal{J}_F)<0,\qquad \det(\mathcal{J}_F)>0.
\end{equation}
If, moreover, the corresponding base reaction--diffusion subsystem is stable for all nonconstant Neumann modes, i.e.
\begin{equation}
\label{eq:RD_block_stable_assumption}
\operatorname{Spec}\bigl(\mathcal{J}_F-\lambda_k \mathrm{diag}(d_S,d_R)\bigr)\subset\{\Re z<0\}
\quad \text{for all }k\ge2,
\end{equation}
then the full one-way system \eqref{eq:oneway_damped_c_main} is linearly asymptotically stable with respect to all nonconstant modes.
In particular, no diffusion/chemotaxis-driven (Turing-type) instability occurs in the one-way damped setting, for any
$\chi_S,\chi_R\ge0$ and $d_c>0$.
\end{theorem}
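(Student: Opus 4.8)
The plan is to linearize \eqref{eq:oneway_damped_c_main} at the homogeneous equilibrium $\mathbf{u}^*=(S^*,R^*,c^*)$, project onto Neumann eigenmodes, and observe that the resulting $3\times 3$ mode matrix is block upper triangular, so its spectrum is the disjoint union of the (assumed stable) base reaction–diffusion spectrum and one manifestly negative eigenvalue coming from the damped $c$-equation.

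First I would write $S=S^*+s$, $R=R^*+r$, $c=c^*+\psi$ and expand to first order. The only step requiring a moment's care is the chemotactic flux: since $c^*$ is spatially constant, $\nabla c^*=0$, so $S\nabla c=(S^*+s)\nabla\psi=S^*\nabla\psi+O(\text{quadratic})$, and hence $-\chi_S\nabla\cdot(S\nabla c)$ linearizes to $-\chi_S S^*\Delta\psi$; likewise the $R$-equation contributes $-\chi_R R^*\Delta\psi$. The reaction terms linearize to $\mathcal{J}_F(s,r)^{\mathsf T}$, and the signal equation to $\partial_t\psi=d_c\Delta\psi+\mathcal{Q}'(c^*)\psi$. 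The decisive structural point is that, because $c$ receives no feedback from $(S,R)$ in the one-way template, the $\psi$-equation is autonomous. Expanding a generic perturbation in the Neumann eigenbasis $\{\omega_k\}$ with $-\Delta\omega_k=\lambda_k\omega_k$ and replacing $\Delta$ by $-\lambda_k$ gives, for each $k$, the mode matrix
\[
\mathcal{M}_k=\begin{pmatrix}
(\mathcal{J}_F)_{11}-d_S\lambda_k & (\mathcal{J}_F)_{12} & \chi_S S^*\lambda_k\\
(\mathcal{J}_F)_{21} & (\mathcal{J}_F)_{22}-d_R\lambda_k & \chi_R R^*\lambda_k\\
0 & 0 & \mathcal{Q}'(c^*)-d_c\lambda_k
\end{pmatrix},
\]
i.e. $\mathcal{M}_k$ is block upper triangular with diagonal blocks $\mathcal{J}_F-\lambda_k\,\mathrm{diag}(d_S,d_R)$ and the scalar $\mathcal{Q}'(c^*)-d_c\lambda_k$, the zero lower-left block being forced precisely by the absence of $(S,R)\to c$ feedback.

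It then follows that $\operatorname{Spec}(\mathcal{M}_k)=\operatorname{Spec}\bigl(\mathcal{J}_F-\lambda_k\,\mathrm{diag}(d_S,d_R)\bigr)\cup\{\mathcal{Q}'(c^*)-d_c\lambda_k\}$. For every nonconstant mode $k\ge 2$ the first set lies in $\{\Re z<0\}$ by hypothesis \eqref{eq:RD_block_stable_assumption}, while the extra eigenvalue satisfies $\mathcal{Q}'(c^*)-d_c\lambda_k\le\mathcal{Q}'(c^*)<0$ since $d_c\lambda_k\ge 0$; hence $\operatorname{Spec}(\mathcal{M}_k)\subset\{\Re z<0\}$ for all $k\ge 2$, all $\chi_S,\chi_R\ge 0$, and all $d_c>0$, which is the asserted mode-wise stability. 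The kinetic-stability hypothesis \eqref{eq:kinetic_stability_oneway} then disposes of the constant mode $k=1$ (where $\lambda_1=0$ and $\mathcal{M}_1=\mathrm{diag}(\mathcal{J}_F,\mathcal{Q}'(c^*))$), so that the conclusion is genuinely a ``no Turing-type instability'' statement. I do not expect a serious obstacle here: the result is structural, and the only subtle point is the linearization of the chemotactic flux — one must notice that the would-be cross term enters only through $\Delta\psi$ and hence lands in the \emph{upper} off-diagonal block of $\mathcal{M}_k$, never in the lower block. (This is exactly the contrast with Template II, where the production term $h(S,R)$ injects nonzero $(3,1),(3,2)$ entries and destroys triangularity.) Finally, the hybrid forcing $\partial_t c=d_c\Delta c+\kappa A-\rho c$ is covered by the same argument, since $A$ evolves independently of $(S,R)$ in the one-way setting and the $c$-equation is linearly damped with rate $-\rho<0$ playing the role of $\mathcal{Q}'(c^*)$; adjoining the remaining decoupled, damped/conserved components only enlarges the triangular structure without introducing eigenvalues of positive real part.
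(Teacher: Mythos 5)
Your proposal is correct and follows essentially the same route as the paper's proof: linearize at the homogeneous state, project onto Neumann modes, note the block upper-triangular mode matrix forced by the absence of $(S,R)\to c$ feedback, and combine the assumed stability of the $(S,R)$ block with the strictly negative scalar eigenvalue $\mathcal{Q}'(c^*)-d_c\lambda_k$. The only (harmless) difference is that you compute the off-diagonal chemotaxis block explicitly, whereas the paper simply notes its precise form is immaterial to the spectrum.
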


\subsubsection{Two-way feedback: recovery of instability via explicit mode-wise criteria}
\label{subsubsec:main_twoway}

To capture feedback loops characteristic of chemotaxis-driven patterning, we consider the reduced (fast-relaxation) closure
in which the diffusive signal satisfies $c\approx g(S,R)$ locally, yielding the two-species cross-diffusion system
\begin{equation}
\label{eq:twoway_reduced_g}
\begin{cases}
\partial_t S = d_S\Delta S - \chi_S\nabla\cdot\bigl(S\nabla g(S,R)\bigr) + f_S(S,R),\\[3pt]
\partial_t R = d_R\Delta R - \chi_R\nabla\cdot\bigl(R\nabla g(S,R)\bigr) + f_R(S,R),
\end{cases}
\qquad \partial_{\mathrm n}S=\partial_{\mathrm n}R=0,
\end{equation}
where $g\in C^1$.
(Section~\ref{subsec:chemotaxis_extensions_overview} describes how \eqref{eq:twoway_reduced_g} arises from a PDE signal model
via fast relaxation; a mode-wise elimination for the full PDE system is placed Appendix~\ref{app:closure}.)

\begin{theorem}[Two-way feedback recovers Turing-type instability: trace/determinant criteria]
\label{thm:twoway_criteria}
Assume \eqref{eq:twoway_reduced_g} admits a spatially homogeneous interior equilibrium $\mathbf{u}^*=(S^*,R^*)$.
Let
\[
\mathcal{J}_F \coloneqq D(f_S,f_R)(S^*,R^*)=
\begin{pmatrix}
a & b\\
c & d
\end{pmatrix},
\qquad
g_S^*\coloneqq \partial_S g(S^*,R^*),\quad g_R^*\coloneqq \partial_R g(S^*,R^*),
\]
and assume kinetic stability:
\begin{equation}
\label{eq:kinetic_stability_twoway}
\operatorname{tr}(\mathcal{J}_F)<0,\qquad \det(\mathcal{J}_F)>0.
\end{equation}
For each Neumann mode $k\ge2$ with eigenvalue $\lambda_k>0$, the mode matrix is
\[
\mathcal{A}(\lambda_k)=\mathcal{J}_F-\lambda_k(\mathcal{D}-\mathcal{H}),
\qquad
\mathcal{D}\coloneqq \mathrm{diag}(d_S,d_R),
\]
where
\[
\mathcal{H}\coloneqq
\begin{pmatrix}
\chi_S g_S^* S^* & \chi_S g_R^* S^*\\[4pt]
\chi_R g_S^* R^* & \chi_R g_R^* R^*
\end{pmatrix}.
\]
Then the equilibrium $\mathbf{u}^*$ is unstable for some nonconstant mode (i.e.\ a Turing-type instability occurs) if and only if
there exists $k\ge2$ such that at least one of the following holds:
\begin{enumerate}[label=(\alph*), leftmargin=*]
\item \textbf{Trace condition:}
\begin{equation}
\label{eq:twoway_trace_condition}
\operatorname{tr}(\mathcal{A}(\lambda_k))
=
\operatorname{tr}(\mathcal{J}_F)
-\lambda_k(d_S+d_R)
+\lambda_k\bigl(\chi_S g_S^* S^*+\chi_R g_R^* R^*\bigr)
>0.
\end{equation}

\item \textbf{Determinant condition:}
$\det(\mathcal{A}(\lambda_k))<0$ for some $k\ge2$.
Equivalently, writing
\begin{equation}
\label{eq:det_quadratic_def}
\det(\mathcal{A}(\lambda))=A_1\lambda^2+A_2\lambda+\det(\mathcal{J}_F),
\qquad \lambda\ge0,
\end{equation}
with
\begin{align}
\label{eq:A1A2_def}
A_{1}
&= d_{S}d_{R}-d_{S}\chi_{R}g_R^*R^*-d_{R}\chi_{S}g_S^*S^*,\nonumber\\
A_{2}
&= -ad_{R}-dd_{S}
+a\chi_{R}g_R^*R^* + d\chi_{S}g_S^*S^*
-b\chi_{R}g_S^*R^* -c\chi_{S}g_R^*S^*,
\end{align}
there exists $k\ge2$ with $\det(\mathcal{A}(\lambda_k))<0$ if and only if one of the following (mutually exclusive) cases holds:
\begin{enumerate}[label=(\roman*), leftmargin=*]
\item $A_1<0$ (then $\det(\mathcal{A}(\lambda))\to -\infty$ as $\lambda\to\infty$ and large modes destabilize);
\item $A_1=0$ and $A_2<0$ (then $\det(\mathcal{A}(\lambda))=A_2\lambda+\det(\mathcal{J}_F)$ becomes negative for sufficiently large $\lambda$);
\item $A_1>0$, $A_2<0$, and $A_2^2-4A_1\det(\mathcal{J}_F)>0$ (then the upward-opening quadratic attains negative values between its two positive roots).
\end{enumerate}
\end{enumerate}
\end{theorem}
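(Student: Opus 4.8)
The plan is to carry out the standard Turing-type spectral reduction---linearize at the homogeneous equilibrium, diagonalize over Neumann eigenmodes, and read off the $2\times2$ trace/determinant conditions---the only genuinely new ingredient being the correct linearization of the chemotactic fluxes. Writing $S=S^*+s$, $R=R^*+r$ with $s,r$ small and using that $(S^*,R^*)$ is spatially homogeneous (so $\nabla g(S^*,R^*)=0$), we have $\nabla g(S,R)=g_S^*\nabla s+g_R^*\nabla r$ to first order; hence in the flux $-\chi_S\nabla\cdot(S\nabla g(S,R))$ the factor $S$ may be frozen at $S^*$ (the correction $s\,\nabla g$ is quadratic), yielding the linearized flux $-\chi_S S^*(g_S^*\Delta s+g_R^*\Delta r)$, and similarly for the $R$-equation. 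Collecting terms, the linearization of \eqref{eq:twoway_reduced_g} is
\[
\partial_t (s,r)^{\mathsf T}=(\mathcal{D}-\mathcal{H})\,\Delta (s,r)^{\mathsf T}+\mathcal{J}_F\,(s,r)^{\mathsf T},\qquad \partial_{\mathrm n}s=\partial_{\mathrm n}r=0,
\]
with $\mathcal{D},\mathcal{H},\mathcal{J}_F$ exactly as in the statement; since each row of $\mathcal{H}$ is a multiple of $(g_S^*,g_R^*)$, $\mathcal{H}$ has rank one---this is the mobility correction that turns $\mathcal{D}-\mathcal{H}$ into a genuine cross-diffusion matrix.

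Next, expand $(s,r)$ in the Neumann eigenbasis $\{\omega_k\}$. The linearized operator has spatially constant coefficients and commutes with $-\Delta_{\mathfrak N}$, so the flow block-diagonalizes: on the $k$-th mode the amplitudes satisfy $\dot v_k=\mathcal{A}(\lambda_k)v_k$ with $\mathcal{A}(\lambda_k)=\mathcal{J}_F-\lambda_k(\mathcal{D}-\mathcal{H})$, because $\Delta$ acts as $-\lambda_k$ on $\omega_k$. A nonconstant mode is linearly unstable iff $\mathcal{A}(\lambda_k)$ has an eigenvalue with positive real part for some $k\ge2$; kinetic stability \eqref{eq:kinetic_stability_twoway} makes $\mathcal{A}(0)=\mathcal{J}_F$ stable, so the constant mode is inert. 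A direct expansion of the $2\times2$ determinant $\det(\mathcal{J}_F-\lambda(\mathcal{D}-\mathcal{H}))$ reproduces the quadratic \eqref{eq:det_quadratic_def} with $A_1=\det(\mathcal{D}-\mathcal{H})$ and $A_2$ as in \eqref{eq:A1A2_def}, and $\operatorname{tr}\mathcal{A}(\lambda_k)$ equals the right-hand side of \eqref{eq:twoway_trace_condition}; this is routine algebra.

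The instability criterion then follows from the $2\times2$ sign dichotomy: since $\mu_\pm=\tfrac12(\operatorname{tr}\pm\sqrt{\operatorname{tr}^2-4\det})$, a real $2\times2$ matrix has an eigenvalue with $\Re>0$ iff $\operatorname{tr}>0$ or $\det<0$ (if $\det<0$ then $\mu_+>0$; if $\det\ge0$ and $\operatorname{tr}>0$ then the common real part $\operatorname{tr}/2>0$ in the complex case, and in the real case a nonnegative product with positive sum forces a positive root; if $\operatorname{tr}\le0$ and $\det\ge0$ no eigenvalue has positive real part). This yields criterion (a) directly and reduces (b) to determining when $\det\mathcal{A}(\lambda_k)<0$ for some $k\ge2$. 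For that, set $\psi(\lambda)\coloneqq\det\mathcal{A}(\lambda)=A_1\lambda^2+A_2\lambda+\det\mathcal{J}_F$, with $\psi(0)=\det\mathcal{J}_F>0$: if $A_1<0$, or $A_1=0$ and $A_2<0$, then $\psi\to-\infty$ and, since $\lambda_k\to\infty$, some mode makes $\psi$ negative, while conversely $\psi(\lambda_k)<0$ with $A_1\le0$ forces those sign patterns; if $A_1>0$, the upward parabola $\psi$ with $\psi(0)>0$ and $\psi(+\infty)=+\infty$ takes negative values iff it has two distinct positive real roots, i.e.\ (using $\det\mathcal{J}_F>0$, so the product of roots is positive) iff $A_2<0$ and $A_2^2-4A_1\det\mathcal{J}_F>0$. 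The three cases are mutually exclusive by the sign of $A_1$.

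\textbf{The main obstacle} is the reverse implication in case (iii): the destabilizing wavenumbers form a \emph{bounded} window $(\lambda_-,\lambda_+)\subset(0,\infty)$, whereas in (i)--(ii) they form a half-line $[\Lambda,\infty)$ automatically met by $\lambda_k\to\infty$. Thus, strictly speaking, ``$\det\mathcal{A}(\lambda_k)<0$ for some $k\ge2$'' in case (iii) requires the discrete Neumann spectrum to intersect $(\lambda_-,\lambda_+)$. I would resolve this by first proving the equivalence for the continuous dispersion variable $\lambda>0$---the standard way a dispersion relation is read, which isolates the unstable band---and then observing that the discrete statement follows whenever $\{\lambda_k\}$ meets that band: automatic in (i)--(ii), and valid in (iii) on sufficiently large or generic domains $U$. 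The remainder of the argument is elementary once the chemotactic fluxes are linearized correctly, so this spectrum-meets-band point is the only place that needs care.
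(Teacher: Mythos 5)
Your proposal is correct and follows essentially the same route as the paper: linearize the chemotactic flux by freezing $S,R$ at $(S^*,R^*)$ to obtain the rank-one mobility correction $\mathcal{H}$, project onto Neumann modes to get $\mathcal{A}(\lambda_k)=\mathcal{J}_F-\lambda_k(\mathcal{D}-\mathcal{H})$, apply the $2\times2$ trace/determinant (Routh--Hurwitz) dichotomy, and analyze the quadratic $\det\mathcal{A}(\lambda)$ by the sign of $A_1$, $A_2$ and the discriminant. The one point where you go beyond the paper is your observation about case (iii): the paper's own proof also argues with the continuous variable $\lambda>0$ (``negative for some $\lambda>0$''), so the bounded unstable band $(\lambda_-,\lambda_+)$ must indeed be assumed (or shown, e.g.\ for large domains) to contain some $\lambda_k$ for the discrete ``only if'' to be literally an ``if and only if''---your flag and proposed fix are legitimate and arguably tighten the statement.
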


\begin{corollary}[Simple sufficient parameter regimes for determinant-driven instability]
\label{cor:twoway_simple_regimes}
Maintain the notation of Theorem~\ref{thm:twoway_criteria}.
Any one of the following conditions is sufficient to guarantee the existence of a nonconstant mode $k\ge2$ with
$\det(\mathcal{A}(\lambda_k))<0$ (hence a Turing-type instability under \eqref{eq:kinetic_stability_twoway}):
\begin{enumerate}[label=(S\arabic*), leftmargin=*]
\item \textbf{Single-species strong feedback:}\quad
$\chi_S g_S^* S^* > d_S$ \ \textbf{or}\ \ $\chi_R g_R^* R^* > d_R$ \quad (implies $A_1<0$).

\item \textbf{Combined feedback dominates diffusion:}\quad
$\dfrac{\chi_S g_S^* S^*}{d_S}+\dfrac{\chi_R g_R^* R^*}{d_R}>1$ \quad (equivalent to $A_1<0$).

\item \textbf{Degenerate (affine) route:}\quad
$A_1=0$ and $A_2<0$.

\item \textbf{Upward-parabola deep dip:}\quad
$A_1>0$, $A_2<0$, and $A_2^{2} > 4A_1\det(\mathcal{J}_F)$.
\end{enumerate}
\end{corollary}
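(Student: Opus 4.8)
The plan is to prove the corollary as a short reduction to Theorem~\ref{thm:twoway_criteria}(b). That theorem already establishes that the existence of a nonconstant Neumann mode $k\ge 2$ with $\det(\mathcal{A}(\lambda_k))<0$ --- and hence, under the kinetic-stability hypothesis \eqref{eq:kinetic_stability_twoway}, a Turing-type instability --- is equivalent to the disjunction of the three algebraic cases (i) $A_1<0$; (ii) $A_1=0$ and $A_2<0$; (iii) $A_1>0$, $A_2<0$, and $A_2^2-4A_1\det(\mathcal{J}_F)>0$, where $A_1,A_2$ are the coefficients from \eqref{eq:A1A2_def}. It therefore suffices to check that each of (S1)--(S4) implies one of (i)--(iii); no fresh analysis of the dispersion polynomial is required.

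First I would dispatch (S2), which is the cleanest: multiplying $\dfrac{\chi_S g_S^* S^*}{d_S}+\dfrac{\chi_R g_R^* R^*}{d_R}>1$ through by $d_Sd_R>0$ yields $d_R\chi_S g_S^* S^*+d_S\chi_R g_R^* R^*>d_Sd_R$, i.e.\ $A_1=d_Sd_R-d_R\chi_S g_S^* S^*-d_S\chi_R g_R^* R^*<0$, so (S2) is in fact equivalent to case (i). For (S1), in the chemoattractive regime in which the feedback products $\chi_S g_S^* S^*$ and $\chi_R g_R^* R^*$ are nonnegative, the bound $\chi_S g_S^* S^*>d_S$ forces $\chi_S g_S^* S^*/d_S>1$, so the left-hand side of the inequality in (S2) exceeds $1$ (the remaining summand being $\ge 0$), and the other disjunct of (S1) is symmetric; hence (S1) $\Rightarrow$ (S2) $\Rightarrow$ case (i). Cases (S3) and (S4) are then immediate --- (S3) is verbatim case (ii), while (S4) coincides with case (iii) upon rewriting $A_2^2>4A_1\det(\mathcal{J}_F)$ as $A_2^2-4A_1\det(\mathcal{J}_F)>0$ (note that $A_1>0$ and, by \eqref{eq:kinetic_stability_twoway}, $\det(\mathcal{J}_F)>0$, so the two roots of the quadratic in \eqref{eq:det_quadratic_def} are real, positive, and bracket the destabilizing window). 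In each of the four regimes, the mode $k$ supplied by Theorem~\ref{thm:twoway_criteria} satisfies $\det(\mathcal{A}(\lambda_k))<0$; since $\mathcal{A}(\lambda_k)$ is a $2\times 2$ matrix, this forces two real eigenvalues of opposite sign, hence one with $\Re z>0$, so the mode is linearly unstable, and together with \eqref{eq:kinetic_stability_twoway} this is precisely a Turing-type instability.

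I do not expect a genuine obstacle; the argument is sign bookkeeping routed through Theorem~\ref{thm:twoway_criteria}(b). The two points deserving care are: (a) (S1) as written presupposes that the feedback product $\chi_S g_S^* S^*$ (resp.\ $\chi_R g_R^* R^*$) is positive --- i.e.\ the genuinely chemoattractive sign $g_S^*>0$ (resp.\ $g_R^*>0$) --- and in a signal-repulsive regime the hypothesis should instead be read through its $A_1<0$ reformulation supplied by (S2); and (b) the discrete-versus-continuous spectral issue inherited from Theorem~\ref{thm:twoway_criteria}: for (S1)--(S3) the dispersion polynomial is negative on an unbounded range of $\lambda$, so since the Neumann eigenvalues satisfy $\lambda_k\to\infty$ some admissible mode necessarily lies in it, whereas for (S4) the destabilizing window is a bounded interval between the two positive roots of \eqref{eq:det_quadratic_def} and one relies on the Neumann spectrum being fine enough (e.g.\ a sufficiently large or appropriately shaped domain) for some $\lambda_k$ to fall inside --- but this caveat is already packaged into the statement of Theorem~\ref{thm:twoway_criteria}, so nothing new is needed here.
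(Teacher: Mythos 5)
Your proposal is correct and follows essentially the same route as the paper's own proof: each of (S1)--(S4) is reduced to the determinant cases (i)--(iii) of Theorem~\ref{thm:twoway_criteria}(b), with (S1)--(S2) giving $A_1<0$ and (S3), (S4) matching the affine and upward-parabola cases verbatim. Your two caveats --- that (S1) implicitly assumes the other feedback product $\chi_R g_R^* R^*$ (resp.\ $\chi_S g_S^* S^*$) is nonnegative, and that in case (S4) the negative window of $\det(\mathcal{A}(\lambda))$ is a bounded interval so a Neumann eigenvalue must actually land in it --- are sharper than the paper's one-line argument, but they do not change the approach.
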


\begin{remark}[Connection to the full PDE signal model]
\label{rem:twoway_pde_connection}
The reduced closure \eqref{eq:twoway_reduced_g} is motivated by fast signal relaxation in the PDE model
$\partial_t c=d_c\Delta c+q\,c+h(S,R)$ with $q<0$.
For each Neumann mode, one may eliminate the signal amplitude by a Schur complement argument, obtaining an effective
$2\times2$ mode matrix for $(S,R)$ whose coefficients depend on $\lambda_k$.
Theorem~\ref{thm:twoway_criteria} provides explicit, closed-form criteria in the reduced local closure $c\approx g(S,R)$.
\end{remark}

\section{Proofs}
\label{sec:analysis}

\subsection{Proof of local Lipschitz continuity of the reaction map (Lemma~\ref{lem:local_lipschitz})}
\label{subsec:analysis_Lipschitz}

Recall the nonlinear map $\mathcal{G}: \mathbb{R}^5\to\mathbb{R}^5$ defined by the right-hand side of \eqref{eq:full_system} and
the structural assumptions \eqref{eq:delta_def}--\eqref{eq:phi_def}.

\begin{proof}
Fix $M>0$ and let $\mathbf{z}=(S,R,D,P,A)$ and $\mathbf{y}=(\tilde S,\tilde R,\tilde D,\tilde P,\tilde A)$ belong to $[-M,M]^5$.
Each component of $\mathcal{G}$ is a finite sum of products of bounded variables with Lipschitz nonlinearities in $D$.
Specifically, $\delta$ is $C^1$ on $[0,\infty)$ with bounded derivative on $[0,M]$, hence Lipschitz there, and $\phi$ is globally Lipschitz by
\eqref{eq:phi_def}. The remaining terms are polynomial (at most bilinear) in $(S,R,P,A)$ and therefore Lipschitz on bounded sets.
Collecting the estimates yields the stated bound with a constant $L_M$ depending only on $M$ and the model parameters.
\end{proof}

\subsection{Proof of well-posedness and positivity}
\label{subsec:analysis_wellposed}

This section proves Theorems~\ref{thm:positivity} and~\ref{thm:global_wellposed}.
We work in the Banach space $\mathbf{X}= \bigl(C(\overline U)\bigr)^3 \times \bigl(L^\infty(U)\bigr)^2$ with positive cone $\mathbf{X}_+$, and use the mild formulation
\eqref{eq:mild_formula_shifted} generated by the positivity-preserving semigroup $\mathbb{P}(t)$ defined in \eqref{eq:semigroup_def_shifted}.
Throughout, the structural hypotheses \eqref{eq:delta_def}--\eqref{eq:phi_def} and the homogeneous Neumann boundary conditions
\eqref{eq:BC_core} are in force.

\subsubsection{Local existence and uniqueness.}
By Lemma~\ref{lem:local_lipschitz}, the nonlinear map $\mathcal{G}:\mathbf{X}\to\mathbf{X}$ induced by the reaction terms
is locally Lipschitz in $\mathbf{X}$.
Standard semilinear evolution theory in Banach spaces therefore yields the existence of a unique maximal mild solution
$\mathbf{u}\in C([0,T_{\max});\mathbf{X})$ satisfying \eqref{eq:mild_formula_shifted}, where either $T_{\max}=\infty$ or
\begin{equation}
\label{eq:blowup_alternative_analysis}
T_{\max}<\infty\quad \Longrightarrow\quad
\limsup_{t\uparrow T_{\max}}\|\mathbf{u}(t)\|_{\mathbf{X}}=+\infty.
\end{equation}

\subsubsection{Proof of Theorem~\ref{thm:positivity}.}
\begin{proof}
We first verify the quasi-positivity condition on $\mathcal{G}$: for each component $i\in\{1,\dots,5\}$,
\begin{equation}
\label{eq:quasi_positivity_def}
z_i=0,\ z_j\ge0\ (j\neq i)\quad \Longrightarrow\quad G_i(\mathbf{z})\ge0.
\end{equation}
Using $\delta(D)\ge 0$, $\phi(D)\in[0,1)$ and $1-\phi(D)\ge 0$ for $D\ge 0$, we have for $\mathbf u\ge 0$,
\[
G_1\big|_{S=0}=\xi\bigl[1-\phi(D)\bigr]R\ge 0,\quad
G_2\big|_{R=0}=\alpha S\ge 0,\quad
G_3\big|_{D=0}=0,
\]
\[
G_4\big|_{P=0}=\beta\bigl[1-\phi(D)\bigr]A\ge 0,\quad
G_5\big|_{A=0}=\theta\phi(D)P\ge 0.
\]
Thus $\mathcal G$ is quasi-positive on $\mathbb R_{\ge 0}^5$.

We show positivity of the diffusive component $S$.
Fix $T\in(0,T_{\max})$.
Since $\mathbf u$ is classical on $[0,T]$, we have
\[
M_T \coloneqq \sup_{t\in[0,T]}\|\mathbf u(t)\|_{L^\infty(U)} < \infty.
\]
Write the $S$-equation in the form
\[
\partial_t S - d_S\Delta S = f_S(S,R,D,P,A),
\]
where
\[
f_S(S,R,D,P,A)
=\lambda_S S\Bigl(1-\tfrac{S+R}{K}\Bigr)-\alpha S-\delta(D)S+\xi\bigl[1-\phi(D)\bigr]R.
\]
For a.e.\ $(x,t)$, define the negative part $S_-(x,t)\coloneqq \max\{-S(x,t),0\}$.
Testing the $S$-equation with $S_-$ and integrating over $U$ yields, for a.e.\ $t\in(0,T)$,
\begin{align*}
\frac12\frac{d}{dt}\|S_-(t)\|_{L^2(U)}^2
&= -d_S\int_U |\nabla S_-(x,t)|^2\,dx - \int_U f_S(S,R,D,P,A)\,S_-(x,t)\,dx.
\end{align*}
(The diffusion term is handled by integration by parts; the boundary term vanishes due to $\partial_{\mathrm n}S=0$.)

Since $f_S$ is locally Lipschitz in $S$ on bounded sets by Lemma~\ref{lem:local_lipschitz}, there exists $L_T>0$
(depending only on $M_T$ and parameters) such that
\[
|f_S(S,R,D,P,A)-f_S(0,R,D,P,A)|\le L_T |S|
\qquad\text{whenever }|S|\le M_T,\ 0\le R,D,P,A\le M_T.
\]
On the set $\{x\in U:\ S(x,t)<0\}$ we have $|S|=S_-$, hence
\[
f_S(S,R,D,P,A)\ge f_S(0,R,D,P,A)-L_T S_-.
\]
Moreover, for all $(R,D,P,A)\ge0$ we have $f_S(0,R,D,P,A)=\xi[1-\phi(D)]R\ge0$.
Therefore,
\[
f_S(S,R,D,P,A)\ge -L_T S_-
\qquad\text{a.e.\ on the set }\{S<0\}.
\]
Consequently,
\[
-\int_U f_S(S,R,D,P,A)\,S_-\,dx
= -\int_{\{S<0\}} f_S(S,R,D,P,A)\,S_-\,dx
\le L_T\int_U S_-^2\,dx  = L_T\|S_-(t)\|_{L^2(U)}^2,
\]
and therefore
\[
\frac{d}{dt}\|S_-(t)\|_{L^2(U)}^2 \le 2L_T\,\|S_-(t)\|_{L^2(U)}^2
\qquad\text{for a.e.\ }t\in(0,T).
\]
Since $S_-(0)=0$ by $S_0\ge 0$, Gr\"onwall's inequality gives $\|S_-(t)\|_{L^2(U)}^2\equiv 0$ on $[0,T]$.
Thus $S(x,t)\ge 0$ for all $(x,t)\in \overline U\times[0,T]$.
As $T<T_{\max}$ was arbitrary, we conclude $S\ge 0$ on $\overline U\times[0,T_{\max})$.

Applying the same argument to $R$ gives $R\ge 0$.
For $D$ we have $\partial_tD-d_D\Delta D=-\gamma_d D$, and the same test with $D_-$ yields $D\ge 0$.

Fix $x\in U$. The variables $(P(x,t),A(x,t))$ satisfy the linear (time-dependent) system
\[
\frac{d}{dt}\begin{pmatrix}
    P(x,t)\\[4pt]
    A(x,t)
\end{pmatrix}
=
\begin{pmatrix}
-\theta\phi(D(x,t)) & \beta\bigl[1-\phi(D(x,t))\bigr]\\[4pt]
\theta\phi(D(x,t)) & -\beta\bigl[1-\phi(D(x,t))\bigr]
\end{pmatrix}
\begin{pmatrix}
    P(x,t)\\[4pt]
    A(x,t)
\end{pmatrix}.
\]
The off-diagonal entries are nonnegative for $D\ge 0$, hence the nonnegative quadrant is forward invariant:
if $P(x,t_0)=0$ and $A(x,t_0)\ge 0$, then
$P_t(x,t_0)=\beta[1-\phi(D(x,t_0))]A(x,t_0)\ge 0$; similarly, if $A(x,t_0)=0$ and $P(x,t_0)\ge 0$, then
$A_t(x,t_0)=\theta\phi(D(x,t_0))P(x,t_0)\ge 0$.
Therefore $P(x,t),A(x,t)\ge 0$ for all $t\in[0,T_{\max})$.

This completes the proof of Theorem~\ref{thm:positivity}.
\end{proof}

\subsubsection{A priori bounds used for continuation and proof of Theorem~\ref{thm:global_wellposed}.}
\begin{proof}
We next derive the bounds needed to rule out \eqref{eq:blowup_alternative_analysis}.
First, the signal equation for $D$ is decoupled and linear, so Proposition~\ref{prop:D_decay} gives
\begin{equation}
\label{eq:analysis_D_bound}
0\le D(x,t)\le \|D_0\|_{L^\infty(U)}\qquad \forall x\in U,\ \forall t\ge0.
\end{equation}
Second, Lemma~\ref{lem:PA_conservation} gives the uniform-in-time estimate
\begin{equation}
\label{eq:analysis_PA_bound}
\sup_{t\ge0}\bigl(\|P(t)\|_{L^\infty(U)}+\|A(t)\|_{L^\infty(U)}\bigr)
\le \|P_0\|_{L^\infty(U)}+\|A_0\|_{L^\infty(U)}.
\end{equation}

Finally, fix $T\in(0,T_{\max})$ and $p\ge2$. Define $W_p(t)\coloneqq \|S(t)\|_{L^p}^p+\|R(t)\|_{L^p}^p$,
testing the $S$- and $R$-equations in \eqref{eq:full_system} against $pS^{p-1}$ and $pR^{p-1}$, using $S,R\ge0$,
dropping the nonpositive diffusion terms, and applying Young's inequality to the coupling terms $SR^{p-1}$ and $S^{p-1}R$ yields
\begin{equation}
\label{eq:analysis_Wp_gronwall}
\frac{d}{dt}W_p(t)\le C_p\,W_p(t)\qquad \text{for }t\in[0,T],
\end{equation}
where $C_p$ depends linearly on $p$, the parameters, and $\|A\|_{L^\infty(U\times(0,T))}$.

By \eqref{eq:analysis_PA_bound}, $\|A\|_{L^\infty(U\times(0,T))}$ is bounded by a constant independent of $t$,
hence Gr\"onwall's inequality implies
\begin{equation}
\label{eq:analysis_Wp_bound}
W_p(t)\le W_p(0)\,e^{C_p t}\qquad \forall t\in[0,T].
\end{equation}
Letting $p\to\infty$ yields an $L^\infty$ bound for $S$ and $R$ on $[0,T]$:
\begin{equation}
\label{eq:analysis_SR_Linfty}
\sup_{0\le t\le T}\bigl(\|S(t)\|_{L^\infty(U)}+\|R(t)\|_{L^\infty(U)}\bigr)\le C_T,
\end{equation}
for some $C_T<\infty$.

Combining \eqref{eq:analysis_D_bound}, \eqref{eq:analysis_PA_bound}, and \eqref{eq:analysis_SR_Linfty} shows that
$\sup_{0\le t\le T}\|\mathbf{u}(t)\|_{\mathbf{X}}<\infty$ for every $T<T_{\max}$.
This precludes the blow-up alternative \eqref{eq:blowup_alternative_analysis}, hence $T_{\max}=\infty$ and the mild solution is global.
Uniqueness follows from the standard contraction argument on bounded time intervals using local Lipschitz continuity of $\mathcal{G}$.
\end{proof}

\subsection{Proof of the long-time reduction and attractivity of the limiting kinetics}
\label{subsec:analysis_longtime}

We prove Theorems~\ref{thm:longtime_reduction} and~\ref{thm:global_attract_reduced}.
Throughout, $\mathbf{u}=(S,R,D,P,A)$ denotes the global mild solution of \eqref{eq:full_system}
given by Theorem~\ref{thm:global_wellposed}, with $\mathbf{u}_0\in\mathbf{X}_+$.

\subsubsection*{Proof of Theorem~\ref{thm:longtime_reduction}}
\begin{proof}
By Proposition~\ref{prop:D_decay} we have the energy decay
\[
\|D(t)\|_{L^2(U)}\le e^{-\gamma_d t}\|D_0\|_{L^2(U)}\qquad \forall t\ge0.
\]
Moreover, by the maximum principle (again Proposition~\ref{prop:D_decay}), $0\le D(x,t)\le \|D_0\|_{L^\infty(U)}$ for all $t\ge0$.
Since $\phi$ is globally Lipschitz on $[0,\infty)$ with $\phi(0)=0$ and $\delta$ is Lipschitz on $[0,\|D_0\|_{L^\infty}]$ with $\delta(0)=0$,
we obtain
\[
\|\phi(D(t))\|_{L^2(U)}\le L_\phi \|D(t)\|_{L^2(U)}\to0,
\qquad
\|\delta(D(t))\|_{L^2(U)}\le L_\delta \|D(t)\|_{L^2(U)}\to0,
\]
as $t\to\infty$, where $L_\phi$ is the (global) Lipschitz constant of $\phi$ and $L_\delta$ is a Lipschitz constant of $\delta$
on $[0,\|D_0\|_{L^\infty}]$.

Fix $T>0$. For $t\ge T$, using the uniform-in-time bounds on $S,R$ over compact time intervals
(Theorem~\ref{thm:global_wellposed}) and the uniform bound on $A$ from \eqref{eq:PA_Linfty_bound}, we estimate
\begin{align*}
\|\delta(D(t))S(t)\|_{L^\infty(U)} &\le \|\delta(D(t))\|_{L^\infty(U)}\,\|S(t)\|_{L^\infty(U)}, \\[6pt]
\|\phi(D(t))A(t)R(t)\|_{L^\infty(U)} &\le \|\phi(D(t))\|_{L^\infty(U)}\,\|A(t)\|_{L^\infty(U)}\,\|R(t)\|_{L^\infty(U)}.
\end{align*}
Since $0\le D(\cdot,t)\le\|D_0\|_{L^\infty(U)}$ and $D(\cdot,t)\to0$ in $L^2(U)$, we have (after extracting a subsequence if needed)
$D(\cdot,t)\to0$ a.e.\ as $t\to\infty$, and by continuity of $\phi$ and $\delta$ at $0$,
$\phi(D(\cdot,t))\to0$ and $\delta(D(\cdot,t))\to0$ a.e.
Combined with boundedness and dominated convergence, this yields
\[
\|\phi(D(t))\|_{L^p(U)}\to0,\qquad \|\delta(D(t))\|_{L^p(U)}\to0
\quad \text{for every finite }p\ge1.
\]
In particular, along any sequence $t_n\to\infty$ for which $D(\cdot,t_n)\to0$ a.e., we have
$\|\phi(D(t_n))\|_{L^\infty(U)}\to0$ and $\|\delta(D(t_n))\|_{L^\infty(U)}\to0$ provided $D(\cdot,t_n)\to0$ uniformly.
(Uniform decay holds, for example, when $D_0\in C(\overline U)$ by standard parabolic smoothing; see Appendix~\ref{app:spectral}.)
Hence all $D$-mediated couplings in the $(S,R)$-equations vanish asymptotically, and the limiting kinetics are given by \eqref{eq:SR_reduced}.

This proves Theorem~\ref{thm:longtime_reduction}.
\end{proof}

\begin{remark}[On uniform decay of $D$]
If one assumes $D_0\in C(\overline U)$ (or higher regularity), then the solution $D(\cdot,t)$ of the damped heat equation is continuous for
all $t>0$ and decays to $0$ uniformly on $\overline U$ as $t\to\infty$.
In that case, $\|\phi(D(t))\|_{L^\infty}$ and $\|\delta(D(t))\|_{L^\infty}$ also converge to $0$ as $t\to\infty$, and the estimates in Step~2
hold without subsequences.
\end{remark}

\subsubsection*{Proof of Theorem~\ref{thm:global_attract_reduced}}
\begin{proof}
Let $(S(t),R(t))$ solve \eqref{eq:SR_reduced} with $S(0),R(0)\ge0$ and $S(0)+R(0)>0$.
Positively invariance of the nonnegative quadrant follows since $\dot S=\xi R\ge0$ on $\{S=0\}$ and $\dot R=\alpha S\ge0$ on $\{R=0\}$.

Let $N(t)\coloneqq S(t)+R(t)$. Summing \eqref{eq:SR_reduced} gives
\[
\dot N = \Bigl(1-\frac{N}{K}\Bigr)\bigl(\lambda_S S+\lambda_R R\bigr).
\]
Since $\lambda_S S+\lambda_R R\ge0$ and $N(t)>0$, we have $\dot N>0$ for $0<N<K$ and $\dot N<0$ for $N>K$.
Hence $N(t)$ is monotone toward $K$ and bounded, so
\begin{equation}
\label{eq:analysis_N_to_K}
N(t)\to K \qquad \text{as } t\to\infty.
\end{equation}

Let $S^*=\frac{\xi K}{\alpha+\xi}$ and define $E(t)\coloneqq S(t)-S^*$.
Using $R(t)=N(t)-S(t)$ and $(\alpha+\xi)S^*=\xi K$, we compute
\begin{align}
\dot E
&= \dot S
= \lambda_S S\Bigl(1-\frac{N}{K}\Bigr)-\alpha S+\xi(N-S)\nonumber\\
&= -(\alpha+\xi)(S-S^*) + \lambda_S S\Bigl(1-\frac{N}{K}\Bigr) + \xi(N-K)\nonumber\\
&= -(\alpha+\xi)E + \underbrace{\lambda_S S\Bigl(1-\frac{N}{K}\Bigr)+\xi(N-K)}_{=:F(t)}.
\label{eq:E_forced}
\end{align}
By \eqref{eq:analysis_N_to_K} and boundedness of $S(t)$, we have $F(t)\to0$ as $t\to\infty$.
Applying the variation-of-constants formula to \eqref{eq:E_forced} yields
\[
E(t)=e^{-(\alpha+\xi)t}E(0)+\int_0^t e^{-(\alpha+\xi)(t-s)}F(s)\,ds,\qquad t\ge 0.
\]
Since $F(t)\in L^\infty(0,\infty)$,
\begin{enumerate}[label=(\roman*)]
\item $E(t)\to0$ as $t\to\infty$.
\item If there exist $T\ge0$ and $\varepsilon>0$ with $|F(t)|\le\varepsilon$ for all $t\ge T$, then for $t\ge T$
\[
|E(t)| \le e^{-\lambda t}|E(0)| + e^{-\lambda t}\int_0^T e^{\lambda s}|F(s)|\,ds + \frac{\varepsilon}{\lambda},
\]
hence $\limsup_{t\to\infty}|E(t)|\le\varepsilon/\lambda$.
\item If $|F(t)|\le C e^{-\beta t}$ for constants $C,\beta>0$, then $E(t)=O(e^{-\gamma t})$ with $\gamma=\min\{\lambda,\beta\}$ (explicit formulas above).
\end{enumerate}
Consequently, $E(t)\to0$ as $t\to\infty$, i.e.\ $S(t)\to S^*$.
Finally, \eqref{eq:analysis_N_to_K} and $R=N-S$ imply $R(t)\to K-S^*=\frac{\alpha K}{\alpha+\xi}=R^*$.
This proves global attractivity of $\mathbb{K}^*=(S^*,R^*)$.
\end{proof}

\subsection{Mode-wise stability proofs}
\label{subsec:analysis_modewise}

We collect the mode-wise stability arguments underlying Theorems~\ref{thm:no_turing}, \ref{thm:oneway_suppression},
and~\ref{thm:twoway_criteria}. Throughout, let $\{(\lambda_k,\omega_k)\}_{k\ge1}$ denote the Neumann Laplacian eigenpairs on $U$:
\[
-\Delta \omega_k=\lambda_k\omega_k\ \text{in }U,\qquad \partial_{\mathrm n}\omega_k=0\ \text{on }\partial U,
\qquad 0=\lambda_1<\lambda_2\le\cdots,\ \lambda_k\to\infty,
\]
with $\{\omega_k\}$ orthonormal in $L^2(U)$.

\subsubsection*{Proof of Theorem~\ref{thm:no_turing} (no diffusion-driven instability)}
\begin{proof}
Let $F(S,R)\coloneqq (f_S(S,R),f_R(S,R))^{\mathsf T}$ and denote by
\[
\mathcal{J}\coloneqq DF(\mathbb{K}^*)=
\begin{pmatrix}
a & b\\
c & d
\end{pmatrix}
\]
the Jacobian of $F$ at $\mathbb{K}^*$.
From \eqref{eq:SR_reduced} one computes
\begin{align}
\partial_S f_S(S,R) &= \lambda_S\Bigl(1-\frac{2S+R}{K}\Bigr)-\alpha,\qquad
\partial_R f_S(S,R)= -\frac{\lambda_S}{K}S+\xi,\nonumber\\
\partial_S f_R(S,R) &= -\frac{\lambda_R}{K}R+\alpha,\qquad
\partial_R f_R(S,R)= \lambda_R\Bigl(1-\frac{S+2R}{K}\Bigr)-\xi.
\label{eq:partials_SR_noturing}
\end{align}
At $\mathbb{K}^*$ we have $S^*=\dfrac{\xi K}{\alpha+\xi}$ and $R^*=\dfrac{\alpha K}{\alpha+\xi}$, hence
\begin{equation}
\label{eq:diag_signs_noturing}
a=\partial_S f_S(\mathbb{K}^*)=-\frac{\xi\lambda_S}{\alpha+\xi}-\alpha<0,
\qquad
d=\partial_R f_R(\mathbb{K}^*)=-\frac{\alpha\lambda_R}{\alpha+\xi}-\xi<0.
\end{equation}
Moreover,
\begin{equation}
\label{eq:tr_det_noturing}
\operatorname{tr}(\mathcal{J})=a+d<0,
\qquad
\det(\mathcal{J})=\xi\lambda_S+\alpha\lambda_R>0,
\end{equation}
so the homogeneous equilibrium is linearly stable.

Seeking normal modes $(s,r)=(\hat s,\hat r)\omega_k(x)e^{\lambda t}$ in the linearization of \eqref{eq:RD_SR} at $\mathbb{K}^*$
yields, for each $k\ge1$, the mode matrix
\[
\mathcal{A}(\lambda_k)\coloneqq \mathcal{J}-\lambda_k D_{SR},
\qquad
D_{SR}\coloneqq \mathrm{diag}(d_S,d_R),
\]
and the dispersion relation $\det(\lambda I-\mathcal{A}(\lambda_k))=0$.
Equivalently, $\lambda$ satisfies the quadratic
\begin{equation}
\label{eq:dispersion_poly_noturing}
\lambda^2 + a_1(\lambda_k)\lambda + a_0(\lambda_k)=0,
\end{equation}
where
\begin{align}
\label{eq:a1a0_noturing}
a_1(\lambda_k)
&=\lambda_k(d_S+d_R)-\operatorname{tr}(\mathcal{J}),
\nonumber\\[4pt]
a_0(\lambda_k)
&=d_Sd_R\lambda_k^2-\bigl(d_S d + d_R a\bigr)\lambda_k+\det(\mathcal{J}).
\end{align}
For any nonconstant mode $k\ge2$ we have $\lambda_k>0$, hence
\[
a_1(\lambda_k)=\lambda_k(d_S+d_R)-\operatorname{tr}(\mathcal{J})>\lambda_k(d_S+d_R)\ge0,
\]
so $a_1(\lambda_k)>0$.
Using $a<0$ and $d<0$ from \eqref{eq:diag_signs_noturing}, we also have $-(d_S d+d_R a)\lambda_k>0$ for $\lambda_k>0$, and therefore
\[
a_0(\lambda_k)\ge d_Sd_R\lambda_k^2+\det(\mathcal{J})>0
\qquad \text{for all }k\ge2.
\]
By the Routh--Hurwitz criterion for quadratic polynomials, $a_1(\lambda_k)>0$ and $a_0(\lambda_k)>0$ imply
$\Re(\lambda)<0$ for both roots of \eqref{eq:dispersion_poly_noturing}. Hence every nonconstant mode is exponentially damped.

Finally, the constant mode $k=1$ corresponds to $\lambda_1=0$ and is stable by \eqref{eq:tr_det_noturing}.
Therefore $(S^*,R^*)$ is linearly asymptotically stable with respect to all Neumann modes, and no diffusion-driven
(Turing) instability can occur.
\end{proof}

\subsubsection*{Proof of Theorem~\ref{thm:oneway_suppression} (one-way damped chemotaxis)}
\begin{proof}
Let $\mathbf{u}^*=(S^*,R^*,c^*)$ be a homogeneous equilibrium of the one-way system \eqref{eq:oneway_damped_c_main}.
Write $\mathcal{J}_F=D(f_S,f_R)(S^*,R^*)$ and denote $D_{SR}=\mathrm{diag}(d_S,d_R)$.

Linearizing \eqref{eq:oneway_damped_c_main} at $\mathbf{u}^*$ and projecting onto an eigenmode $\omega_k$ with $\lambda_k>0$
(i.e.\ $k\ge2$) yields a $3\times3$ linear ODE for the mode amplitudes
$\hat{\mathbf{z}}=(\hat s,\hat r,\hat c)^{\mathsf T}$:
\begin{equation}
\label{eq:analysis_oneway_mode}
\frac{d}{dt}\hat{\mathbf{z}}=\mathcal{M}(\lambda_k)\hat{\mathbf{z}},
\qquad
\mathcal{M}(\lambda_k)=
\begin{pmatrix}
\mathcal{J}_F-\lambda_k D_{SR} & \lambda_k\,\mathbf{b}\\[2pt]
\mathbf{0}^{\mathsf T} & \mathcal{Q}'(c^*)-\lambda_k d_c
\end{pmatrix}.
\end{equation}
Here $\mathbf{b}$ is determined by the linearization of $-\chi_S\nabla\cdot(S\nabla c)$ and $-\chi_R\nabla\cdot(R\nabla c)$
at the constant state $(S^*,R^*,c^*)$; its precise form is immaterial for the spectrum.

Since $\mathcal{M}(\lambda_k)$ is block upper-triangular, its eigenvalues consist of:
(i) the eigenvalues of $\mathcal{J}_F-\lambda_k D_{SR}$ and
(ii) the scalar eigenvalue $\mathcal{Q}'(c^*)-\lambda_k d_c$.
By $\mathcal{Q}'(c^*)<0$ and $d_c>0$, the scalar eigenvalue is strictly negative for all $\lambda_k\ge0$.
By the hypothesis \eqref{eq:RD_block_stable_assumption}, the eigenvalues of $\mathcal{J}_F-\lambda_k D_{SR}$ satisfy
$\Re z<0$ for every $k\ge2$.
Therefore every nonconstant mode is exponentially damped, and no diffusion/chemotaxis-driven instability occurs in the one-way case.
\end{proof}

\subsubsection*{Proof of Theorem~\ref{thm:twoway_criteria} (two-way feedback criteria)}
\begin{proof}
Consider the reduced two-way closure \eqref{eq:twoway_reduced_g} and let $\mathbf{u}^*=(S^*,R^*)$ be a homogeneous interior equilibrium.
Linearizing at $\mathbf{u}^*$ yields
\[
\partial_t \tilde{\mathbf{u}}=(\mathcal{D}-\mathcal{H})\Delta \tilde{\mathbf{u}}+\mathcal{J}_F\tilde{\mathbf{u}},
\qquad
\tilde{\mathbf{u}}=(S-S^*,R-R^*)^{\mathsf T},
\]
where $\mathcal{D}=\mathrm{diag}(d_S,d_R)$, $\mathcal{J}_F=DF(\mathbf{u}^*)$, and $\mathcal{H}$ is given in Theorem~\ref{thm:twoway_criteria}.

Projecting onto the Neumann eigenmode $\omega_k$ with $\lambda_k>0$ gives the modal ODE
\[
\frac{d}{dt}\hat{\mathbf{u}}=\mathcal{A}(\lambda_k)\hat{\mathbf{u}},
\qquad
\mathcal{A}(\lambda_k)=\mathcal{J}_F-\lambda_k(\mathcal{D}-\mathcal{H}).
\]
For each fixed $k\ge2$, $\mathcal{A}(\lambda_k)$ is a real $2\times2$ matrix.
By the Routh--Hurwitz criterion for $2\times2$ systems, the mode $k$ is unstable if and only if
\begin{equation}
\label{eq:RH_2x2}
\operatorname{tr}\mathcal{A}(\lambda_k)>0
\qquad \text{or}\qquad
\det\mathcal{A}(\lambda_k)<0,
\end{equation}
with equalities corresponding to neutral stability boundaries.

A direct computation gives the trace formula \eqref{eq:twoway_trace_condition}, proving the trace criterion.
For the determinant, expanding $\det(\mathcal{A}(\lambda))$ as a polynomial in $\lambda$ yields \eqref{eq:det_quadratic_def}
with coefficients \eqref{eq:A1A2_def}. Since $\det(\mathcal{J}_F)>0$ by \eqref{eq:kinetic_stability_twoway}, the constant term is positive.
Thus $\det(\mathcal{A}(\lambda))<0$ for some $\lambda>0$ occurs exactly in the three subcases listed in Theorem~\ref{thm:twoway_criteria}:
\begin{enumerate}[label=(\roman*)]
\item If $A_1<0$, then $\det(\mathcal{A}(\lambda))\to -\infty$ as $\lambda\to\infty$, hence some large mode destabilizes.
\item If $A_1=0$, then $\det(\mathcal{A}(\lambda))=A_2\lambda+\det(\mathcal{J}_F)$, which is negative for some $\lambda>0$ iff $A_2<0$.
\item If $A_1>0$, then the upward-opening quadratic is negative for some $\lambda>0$ iff it has two real roots and both are positive.
Since the product of the roots is $\det(\mathcal{J}_F)/A_1>0$, the roots (when real) have the same sign; thus both are positive iff
their sum $-A_2/A_1$ is positive, i.e.\ $A_2<0$, and the discriminant is positive, i.e.\ $A_2^2-4A_1\det(\mathcal{J}_F)>0$.
\end{enumerate}
Combining these determinant facts with \eqref{eq:RH_2x2} completes the proof.
\end{proof}

\subsubsection*{Proof of Corollary~\ref{cor:twoway_simple_regimes}}
\begin{proof}
Each item is an immediate sufficient condition for the determinant criterion in Theorem~\ref{thm:twoway_criteria}.
Specifically, (S1)--(S2) imply $A_1<0$, hence $\det(\mathcal{A}(\lambda))\to -\infty$ as $\lambda\to\infty$ and some large mode destabilizes.
Item (S3) is exactly the affine case $A_1=0$ with $A_2<0$.
Item (S4) is precisely the upward-opening case $A_1>0$ with $A_2<0$ and positive discriminant, ensuring negativity between the two positive roots.
\end{proof}

\begin{remark}[How the mode-wise arguments extend to the PDE signal model]
For the full PDE signal system \eqref{eq:chemo_twoway_template}, projecting onto $\omega_k$ yields a $(2+1)\times(2+1)$ linear system for
$(\hat s,\hat r,\hat c)$ in which $\hat c$ can be eliminated explicitly, producing an effective $2\times2$ matrix for $(\hat s,\hat r)$ whose entries
depend on $\lambda_k$ through the resolvent of $d_c\lambda_k-q$. This provides a direct link between the reduced closure
$c\approx g(S,R)$ and the full PDE formulation; details are placed in Appendix~\ref{app:closure}.
\end{remark}

\section{Discussion}
\label{sec:discussion}

We developed a hybrid PDE--ODE framework for two competing populations coupled to a non-motile microenvironmental switch and a
diffusive inhibitory signal, and we quantified how chemotaxis-driven transport depends on coupling directionality once the
chemotactic cue is modeled as a diffusive field $c(x,t)$.
Our analysis is intentionally mode-wise and linear: throughout, ``pattern formation'' refers to linear instability of a spatially
homogeneous equilibrium under homogeneous Neumann boundary conditions, and we distinguish this from Keller--Segel blow-up or aggregation
phenomena, which require different techniques and are not pursued here.

\subsection{Mechanistic interpretation of the directionality principle}

The core mathematical mechanism can be read directly from the modal linearizations.

\subsubsection{One-way damped signaling suppresses patterns.}
In the one-way regime (Section~\ref{subsec:main_directionality}), the chemoattractant $c$ is generated independently of $(S,R)$ and is
linearly damped (either autonomously or through a microenvironment-driven source such as $A$ with clearance).
After projection onto Neumann modes, the linearized mode matrix is block upper-triangular, so its spectrum is the union of the $(S,R)$ diffusion block and the scalar $c$-block.
Damping ensures $\mathcal{Q}'(c^*)-\lambda_k d_c<0$ for every $k\ge2$, while the $(S,R)$ block retains the same eigenvalues as the base
reaction--diffusion subsystem. Hence chemotaxis does not create new unstable eigenvalues: in this directionality class, the signal
decays without receiving feedback, and the population spectrum is unaffected. Biologically, this corresponds to cells moving along a cue
that is exogenous or microenvironmentally prescribed and cleared; such a cue can bias motion but, in the linear regime, it cannot close a
positive feedback loop that amplifies spatial heterogeneity.

\subsubsection{Two-way feedback can recover instability via an effective mobility rewrite.}
In the two-way regime, the signal depends on $(S,R)$ (directly or through fast relaxation), and the linearization generates effective
cross-diffusion. At the mode level, diffusion is no longer governed by $\mathcal{D}$ alone but by $\mathcal{D}-\mathcal{H}$, where
$\mathcal{H}$ is a feedback-induced matrix determined by chemotactic sensitivities and derivatives of the closure $c\approx g(S,R)$.
This modifies both the trace and determinant of the mode matrix $\mathcal{A}(\lambda_k)$ and allows destabilization of some nonconstant
mode even when the kinetics are stable. The explicit criteria in Theorem~\ref{thm:twoway_criteria} quantify precisely when feedback
dominates diffusion through either a trace sign change \eqref{eq:twoway_trace_condition} or determinant negativity driven by the quadratic
coefficients $A_1,A_2$ \eqref{eq:A1A2_def}. Biologically, this corresponds to cue production or activation that is locally amplified by
the populations (directly or indirectly), closing a positive loop that can convert directed migration into emergent heterogeneity.

\subsection{Limitations and modeling assumptions}

Our two-way instability criteria are stated for the reduced closure $c\approx g(S,R)$.
This is a natural approximation when the chemoattractant equilibrates on a faster time scale than cell movement, but it is not universal.
For the full PDE signal model $\partial_t c=d_c\Delta c+q\,c+h(S,R)$, eliminating $c$ yields an effective $2\times2$ mode matrix whose
coefficients depend on $\lambda_k$ through the resolvent $(d_c\lambda_k-q)^{-1}$, so the reduced ``local'' closure is best interpreted as
an asymptotic or short-scale approximation. A systematic comparison of reduced and full PDE criteria is therefore needed when the signal
time scale is comparable to population dynamics.

Our analysis is deterministic and focuses on mode-wise linear stability under Neumann boundary conditions.
In applications, however, intrinsic fluctuations stemming from finite population sizes and stochastic phenotype flips may be non-negligible.
Complementary stochastic differential equation-based analysis frameworks for 
pure demographic noise establish global well-posedness,
and show that demographic noise alone does not induce population extinction; see \citep{abundo_stochastic_1991,wang_analysis_2025-1}.
In the present hybrid PDE--ODE context, incorporating demographic noise (or hybrid jump perturbations of the local
switching kinetics) is a natural next step, and a key question is how such intrinsic noise interacts with diffusion-driven
mode selection and feedback-induced effective cross-diffusion.

The hybrid PDE--ODE structure introduces parameters governing switching (e.g.\ $\theta,\beta$), clearance/decay (e.g.\ $\gamma_d$),
and chemotactic sensitivity (e.g.\ $\chi_S,\chi_R$), in addition to baseline growth and conversion rates.
Identifying these parameters from experimental data remains nontrivial, especially when only partial observations of microenvironmental
states are available. Practical calibration will likely require combining time-resolved spatial data with reduced-order summaries (e.g.\
dominant Fourier modes) informed by the explicit mode-wise criteria.

\subsection{Outlook and extensions}

Several directions follow naturally from the present analysis.

A first extension is to derive instability criteria directly for the PDE--PDE--PDE system with $c$ governed by
$\partial_t c=d_c\Delta c+q\,c+h(S,R)$, using mode-wise elimination of $c$ (Schur complements) to obtain sharp conditions in terms of
$(d_c,q)$ and the Laplacian spectrum. This would clarify precisely when the reduced closure $c\approx g(S,R)$ is accurate and how nonlocal
signal effects shift thresholds.

While linear instability is an entry point for pattern formation, the observed long-time patterns depend on nonlinear saturation,
possible amplitude equations near onset, and secondary instabilities. Extending the analysis to weakly nonlinear regimes (e.g.\ via
center-manifold reductions or normal forms) would connect the trace/determinant thresholds to pattern selection and robustness.

Our results hold for $N\in\{1,2,3\}$ at the level of mode-wise criteria, but geometry enters through the Neumann spectrum and can strongly
influence which modes are realized. Systematic exploration on three-dimensional domains with realistic boundary geometry is therefore
important for tumor microenvironment applications, where confinement, interfaces, and anisotropy can bias pattern selection.

The present microenvironment is a two-state switch; richer switching networks (multiple phenotypes, irreversible transitions, or
history-dependent states) \citep{jain_dynamical_2023,friedman_hysteresis_2014} can be incorporated while preserving the hybrid PDE--ODE structure. On the transport side, nonlinear
sensitivities and density-dependent motility \citep{wang_chemotaxis_2010,hofer_dictyostelium_1995,kowalczyk_preventing_2005,choi_prevention_2010,lushnikov_macroscopic_2008} can be introduced, with the expectation that directionality (presence/absence of feedback)
will remain the organizing principle, while the explicit thresholds will be modified.

In summary, the analysis identifies directionality as a structural determinant of chemotaxis-driven patterning in hybrid PDE--ODE
models: without feedback, damping in the diffusive cue prevents spectral destabilization, whereas with feedback the effective mobility is
rewritten and explicit mode-wise criteria predict when diffusion/chemotaxis-driven instabilities can emerge. This provides a tractable
bridge between reaction--diffusion and chemotaxis paradigms in a microenvironmentally structured setting.

\appendix

\section{Spectral preliminaries}
\label{app:spectral}

This appendix collects standard spectral facts for the Neumann Laplacian that are used in the main text, in particular for
(i) mode-wise stability arguments under homogeneous Neumann boundary conditions, and
(ii) the spectral representation and smoothing of the damped heat equation for $D$.
For completeness we record statements in the form needed for reproducibility; detailed proofs can be found in standard references.

\subsection{Neumann eigenpairs and basic properties}
\label{app:neumann_eigs}

Let $U\subset\mathbb{R}^N$ ($N\in\{1,2,3\}$) be a bounded domain with smooth boundary $\partial U$.
Consider the Neumann eigenvalue problem
\begin{equation}
\label{eq:app_neumann_problem}
\begin{cases}
-\Delta \omega_k=\lambda_k \omega_k & \text{in }U,\\[2pt]
\partial_{\mathrm n}\omega_k=0 & \text{on }\partial U,
\end{cases}
\end{equation}
where $\partial_{\mathrm n}$ denotes the outward normal derivative.
Classical spectral theory \citep{evans_partial_2008} yields a complete orthonormal basis $\{\omega_k\}_{k\ge1}$ of $L^2(U)$ consisting of smooth eigenfunctions
and a nondecreasing sequence of eigenvalues
\begin{equation}
\label{eq:app_neumann_order}
0=\lambda_1<\lambda_2\le\lambda_3\le\cdots,\qquad \lambda_k\to\infty\ \text{as }k\to\infty.
\end{equation}
The first eigenfunction $\omega_1$ is constant (up to normalization), and modes $k\ge2$ correspond to nonconstant spatial perturbations.

\subsection{Weyl-type eigenvalue bounds}
\label{app:weyl_bounds}

We use a quantitative form of Weyl's law to control summability of eigenfunction expansions.

\begin{lemma}[Weyl-type bounds]\citep{netrusov_weyl_2005}
\label{lem:weyl_bounds}
There exist constants $C_1,C_2>0$ and an integer $k_0\ge2$ such that
\begin{equation}
\label{eq:weyl_bounds}
C_1\,k^{2/N}\le \lambda_k \le C_2\,k^{2/N},
\qquad \forall k\ge k_0.
\end{equation}
\end{lemma}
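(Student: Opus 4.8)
The plan is to reduce the two-sided bound on the $k$-th Neumann eigenvalue to a two-sided bound on the eigenvalue counting function $N(\lambda)\coloneqq \#\{k\ge1:\lambda_k\le\lambda\}$ and then invert. First I would invoke the Weyl asymptotics for the Neumann Laplacian on a bounded domain with smooth (indeed, merely Lipschitz) boundary---precisely the content of \citep{netrusov_weyl_2005}---which gives $N(\lambda)=(2\pi)^{-N}\omega_N|U|\,\lambda^{N/2}+o(\lambda^{N/2})$ as $\lambda\to\infty$, where $\omega_N$ is the volume of the unit ball in $\mathbb{R}^N$. For the qualitative claim \eqref{eq:weyl_bounds} the sharp remainder is not needed: it suffices that there exist $\lambda_*>0$ and constants $0<c_1\le c_2$ with $c_1\lambda^{N/2}\le N(\lambda)\le c_2\lambda^{N/2}$ for all $\lambda\ge\lambda_*$.

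Second, I would record the elementary duality $\lambda_k\le\lambda\iff N(\lambda)\ge k$, valid because the eigenvalues in \eqref{eq:app_neumann_order} are listed in nondecreasing order with multiplicity; in particular $N(\lambda_k)\ge k$ for every $k$, while $N(\mu)\le k-1$ whenever $\mu<\lambda_k$. Since $\lambda_k\to\infty$, I fix $k_0\ge2$ large enough that $\lambda_{k_0}>\lambda_*$, so that $\lambda_k>\lambda_*$ for all $k\ge k_0$. Then $k\le N(\lambda_k)\le c_2\lambda_k^{N/2}$ yields the lower bound $\lambda_k\ge c_2^{-2/N}k^{2/N}$, i.e.\ $C_1=c_2^{-2/N}$. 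For the upper bound, pick $\mu\in(\lambda_*,\lambda_k)$; then $c_1\mu^{N/2}\le N(\mu)\le k-1<k$, and letting $\mu\uparrow\lambda_k$ gives $c_1\lambda_k^{N/2}\le k$, hence $\lambda_k\le c_1^{-2/N}k^{2/N}$, i.e.\ $C_2=c_1^{-2/N}$ (and $C_1\le C_2$ since $c_1\le c_2$). This establishes \eqref{eq:weyl_bounds} with the stated exponent $2/N$.

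There is no serious analytic obstacle here; the result is classical and is in fact quoted from \citep{netrusov_weyl_2005}. The only points requiring care are (i) keeping the multiplicity bookkeeping straight so that the duality $\lambda_k\le\lambda\iff N(\lambda)\ge k$ is applied correctly, and (ii) choosing the threshold $k_0$ \emph{after} the constants $c_1,c_2,\lambda_*$ are fixed, so that the bounds $N(\lambda_k)\ge k$, $N(\mu)\le k-1$, and the polynomial estimates on $N$ are simultaneously in force. If a self-contained argument were wanted, the upper bound $N(\lambda)\le c_2\lambda^{N/2}$ follows from Dirichlet--Neumann bracketing of $U$ against a finite cube cover together with the explicit cube spectrum, and the lower bound from bracketing against a cube contained in $U$; but since \citep{netrusov_weyl_2005} is available, this refinement is unnecessary.
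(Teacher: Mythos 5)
Your proposal is correct and follows essentially the same route as the paper, which simply cites Weyl asymptotics for the Neumann Laplacian; you merely make explicit the standard inversion from the counting-function form $c_1\lambda^{N/2}\le N(\lambda)\le c_2\lambda^{N/2}$ to the eigenvalue bounds via the duality $\lambda_k\le\lambda\iff N(\lambda)\ge k$. The bookkeeping (choice of $k_0$ after fixing $c_1,c_2,\lambda_*$, and the limit $\mu\uparrow\lambda_k$ for the upper bound) is handled correctly, so no issues.
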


\begin{proof}
This follows from Weyl asymptotics for the Neumann Laplacian on smooth bounded domains.
\end{proof}

\subsection{Sup-norm derivative bounds for eigenfunctions}
\label{app:eig_supnorm}

The following estimate is a convenient sufficient condition for termwise differentiation of eigenfunction expansions.

\begin{lemma}[Sup-norm derivative bound]
\label{lem:eig_derivative_supnorm}
Let $(\lambda_k,\omega_k)$ satisfy \eqref{eq:app_neumann_problem} and assume $\|\omega_k\|_{L^2(U)}=1$.
For every multi-index $\alpha\in\mathbb{N}^N$ there exists $C=C(\alpha,U)>0$ such that
\begin{equation}
\label{eq:eig_sup_derivative}
\|\partial^\alpha \omega_k\|_{L^\infty(U)}\le C(\alpha,U)\,\lambda_k^{|\alpha|/2+1},
\qquad k\ge1,
\end{equation}
where $|\alpha|=\alpha_1+\cdots+\alpha_N$.
Consequently, for each integer $m\ge1$ there exists $C=C(m,U)>0$ such that
\begin{equation}
\label{eq:eig_Hm_bound}
\|\omega_k\|_{H^m(U)}\le C(m,U)\,(1+\lambda_k)^{m/2+1}.
\end{equation}
\end{lemma}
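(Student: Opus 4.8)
The plan is the classical elliptic-bootstrap-plus-Sobolev-embedding argument, exploiting that $\partial U$ is smooth and $N\le 3$. \emph{Step 1 (iterated elliptic regularity).} Starting from $-\Delta\omega_k=\lambda_k\omega_k$ with $\partial_{\mathrm n}\omega_k=0$ and the normalization $\|\omega_k\|_{L^2(U)}=1$, I would apply the $H^{j+2}$ a priori estimate for the Neumann Laplacian on a smooth bounded domain,
\[
\|u\|_{H^{j+2}(U)}\le C_j\bigl(\|\Delta u\|_{H^{j}(U)}+\|u\|_{L^2(U)}\bigr),
\]
with $u=\omega_k$ and $\Delta\omega_k=-\lambda_k\omega_k$; the higher-order Neumann compatibility conditions required at each bootstrap level hold automatically because $\omega_k$ is itself an eigenfunction. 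Induction on $j$ then gives $\|\omega_k\|_{H^{2j}(U)}\le C_j(1+\lambda_k)^{j}$, and either interpolating between consecutive even orders or simply estimating at the next even order yields
\[
\|\omega_k\|_{H^{s}(U)}\le C_s\,(1+\lambda_k)^{s/2},\qquad s\in\mathbb{N}.
\]

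\emph{Step 2 (Sobolev embedding into $C^{|\alpha|}$).} For a multi-index $\alpha$ set $s_\alpha=|\alpha|+2$. Since $N\le 3$ one has $s_\alpha>|\alpha|+N/2$, so $H^{s_\alpha}(U)\hookrightarrow C^{|\alpha|}(\overline{U})$ on the smooth bounded domain $U$; combining this with Step 1,
\[
\|\partial^\alpha\omega_k\|_{L^\infty(U)}\le \|\omega_k\|_{C^{|\alpha|}(\overline{U})}\le C\,\|\omega_k\|_{H^{|\alpha|+2}(U)}\le C\,(1+\lambda_k)^{|\alpha|/2+1}.
\]
\emph{Step 3 (conclusion).} For $k\ge 2$ one has $\lambda_k\ge\lambda_2>0$, hence $1+\lambda_k\le(1+\lambda_2^{-1})\lambda_k$ and the previous display becomes \eqref{eq:eig_sup_derivative} (with a constant now depending also on $\alpha$); for $k=1$ the eigenfunction is constant, so \eqref{eq:eig_sup_derivative} is trivial for $|\alpha|\ge 1$ and for $|\alpha|=0$ reduces to the fixed bound $\|\omega_1\|_{L^\infty(U)}\le C$. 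The $H^m$ estimate \eqref{eq:eig_Hm_bound} is then immediate from Step 1, since $\|\omega_k\|_{H^m(U)}\le C(1+\lambda_k)^{m/2}\le C(1+\lambda_k)^{m/2+1}$.

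There is no genuine obstacle here; the only point requiring attention is the bookkeeping of the $\lambda_k$-dependence through the bootstrap in Step 1, where each application of the regularity estimate multiplies the right-hand data by $\lambda_k$ and thus produces the $(1+\lambda_k)^{s/2}$ growth. The deliberately non-sharp ``$+1$'' in the exponents of \eqref{eq:eig_sup_derivative}--\eqref{eq:eig_Hm_bound} leaves enough slack that one may skip interpolation altogether and simply estimate at one Sobolev order higher than strictly needed.
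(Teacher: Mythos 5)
Your proposal is correct and follows essentially the same route as the paper, whose proof is exactly this argument stated in one line: iterated elliptic regularity for $-\Delta\omega_k=\lambda_k\omega_k$ under the Neumann condition plus the embedding $H^{|\alpha|+2}(U)\hookrightarrow C^{|\alpha|}(\overline U)$ for $N\le 3$, with your Steps 1--3 simply making the $\lambda_k$-bookkeeping and the $k=1$, $\lambda_1=0$ edge case explicit. One small caveat: for odd $|\alpha|$ the ``skip interpolation and go one even order higher'' shortcut yields exponent $|\alpha|/2+3/2$ rather than the stated $|\alpha|/2+1$, so to match \eqref{eq:eig_sup_derivative} exactly you should keep the interpolation step you already mention (or accept the harmless larger exponent, which suffices for every use of the lemma in Appendix~\ref{app:spectral}).
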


\begin{proof}
The estimate follows from elliptic regularity applied iteratively to $-\Delta\omega_k=\lambda_k\omega_k$ together with Sobolev embeddings
$H^{m}(U)\hookrightarrow C^{m-2}(\overline U)$ for $N\le3$.
\end{proof}

\subsection{Spectral representation for the damped heat equation}
\label{app:damped_heat_spectral}

For completeness we state the spectral formula used for the damped signal $D$.
Consider
\begin{equation}
\label{eq:app_damped_heat}
\partial_t D = d_D\Delta D-\gamma_d D
\quad\text{in }U\times(0,\infty),\qquad
\partial_{\mathrm n}D=0\ \text{on }\partial U\times(0,\infty),\qquad
D(\cdot,0)=D_0.
\end{equation}

\begin{proposition}[Spectral solution formula and smoothing]
\label{prop:damped_heat_spectral}
Assume $D_0\in L^2(U)$ and let $(\lambda_k,\omega_k)$ be the Neumann eigenpairs from \eqref{eq:app_neumann_problem}.
Define $c_k\coloneqq (D_0,\omega_k)_{L^2(U)}$.
Then the unique solution of \eqref{eq:app_damped_heat} admits the spectral representation
\begin{equation}
\label{eq:damped_heat_series}
D(x,t)=\sum_{k=1}^\infty c_k\,e^{-(d_D\lambda_k+\gamma_d)t}\,\omega_k(x),
\qquad t\ge0,
\end{equation}
with convergence in $L^2(U)$ uniformly for $t\ge0$.
Moreover, for every $\varepsilon>0$ and every multi-index $\alpha$, the differentiated series
\[
\partial^\alpha D(x,t)=\sum_{k=1}^\infty c_k\,e^{-(d_D\lambda_k+\gamma_d)t}\,\partial^\alpha\omega_k(x)
\]
converges uniformly on $\overline U\times[\varepsilon,\infty)$, hence $D(\cdot,t)\in C^\infty(\overline U)$ for all $t>0$.
\end{proposition}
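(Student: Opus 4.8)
The plan is to verify the series \eqref{eq:damped_heat_series} termwise and then bootstrap smoothness from the exponential weights.

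\emph{Spectral representation and uniqueness.} Expand $D_0=\sum_{k\ge1}c_k\omega_k$ with $c_k=(D_0,\omega_k)_{L^2(U)}$, so that $\sum_k c_k^2=\|D_0\|_{L^2(U)}^2<\infty$ by Parseval. Each summand $c_k e^{-(d_D\lambda_k+\gamma_d)t}\omega_k(x)$ solves \eqref{eq:app_damped_heat}, since $d_D\Delta\omega_k-\gamma_d\omega_k=-(d_D\lambda_k+\gamma_d)\omega_k$ matches the time derivative. Because $0\le e^{-(d_D\lambda_k+\gamma_d)t}\le1$ for all $t\ge0$ and all $k$, the tail satisfies $\sum_{k>N}c_k^2 e^{-2(d_D\lambda_k+\gamma_d)t}\le\sum_{k>N}c_k^2$ uniformly in $t\ge0$; hence the series converges in $L^2(U)$ uniformly for $t\ge0$, defines $D\in C([0,\infty);L^2(U))$ with $D(\cdot,0)=D_0$, and (by passing to the limit in the partial sums, or equivalently by recognizing the series as $e^{-\gamma_d t}e^{d_Dt\Delta_{\mathfrak N}}D_0$) is the mild solution of \eqref{eq:app_damped_heat} in the sense of Definition~\ref{def:mild_sol}. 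Uniqueness is the usual energy estimate: if $w$ denotes the difference of two solutions, then $\tfrac12\tfrac{d}{dt}\|w(t)\|_{L^2(U)}^2=-d_D\|\nabla w(t)\|_{L^2(U)}^2-\gamma_d\|w(t)\|_{L^2(U)}^2\le0$, so $w\equiv0$.

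\emph{Smoothing for $t>0$.} Fix $\varepsilon>0$ and a multi-index $\alpha$. By Parseval $|c_k|\le\|D_0\|_{L^2(U)}$, and Lemma~\ref{lem:eig_derivative_supnorm} gives $\|\partial^\alpha\omega_k\|_{L^\infty(U)}\le C(\alpha,U)\lambda_k^{|\alpha|/2+1}$. Thus on $\overline U\times[\varepsilon,\infty)$ the $k$-th term of the formally differentiated series is bounded by
\[
|c_k|\,e^{-(d_D\lambda_k+\gamma_d)\varepsilon}\,\|\partial^\alpha\omega_k\|_{L^\infty(U)}\le C(\alpha,U)\,\|D_0\|_{L^2(U)}\,\lambda_k^{|\alpha|/2+1}\,e^{-d_D\lambda_k\varepsilon}.
\]
Splitting $e^{-d_D\lambda_k\varepsilon}=e^{-d_D\lambda_k\varepsilon/2}\,e^{-d_D\lambda_k\varepsilon/2}$ and using $\sup_{\lambda\ge0}\lambda^{|\alpha|/2+1}e^{-d_D\lambda\varepsilon/2}<\infty$ together with the Weyl lower bound $\lambda_k\ge C_1k^{2/N}$ (Lemma~\ref{lem:weyl_bounds}) reduces the tail to $\sum_k e^{-d_DC_1\varepsilon k^{2/N}/2}<\infty$. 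By the Weierstrass $M$-test the differentiated series converges absolutely and uniformly on $\overline U\times[\varepsilon,\infty)$ for every $\alpha$; the standard theorem on term-by-term differentiation (the undifferentiated series converges and each differentiated series converges uniformly) then identifies these limits with the classical derivatives $\partial^\alpha D$. Since $\alpha$ and $\varepsilon>0$ are arbitrary, $D(\cdot,t)\in C^\infty(\overline U)$ for all $t>0$, and \eqref{eq:app_damped_heat} holds classically on $U\times(0,\infty)$.

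The main obstacle is bookkeeping rather than anything deep: one must combine the eigenfunction sup-norm bound with the Weyl asymptotics to extract summability of $\sum_k\lambda_k^{p}e^{-d_D\lambda_k\varepsilon}$ for every $p>0$, and then invoke the differentiation-of-series theorem with the correct hypotheses. A minor point worth stating explicitly is the identification of the $L^2$-series with the mild solution used elsewhere, which follows from writing the series as $e^{-\gamma_d t}e^{d_Dt\Delta_{\mathfrak N}}D_0$ and from the uniqueness estimate above.
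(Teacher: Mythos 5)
Your proposal is correct and follows essentially the same route as the paper's proof: separation of variables in the Neumann eigenbasis for the representation, then the sup-norm eigenfunction bound of Lemma~\ref{lem:eig_derivative_supnorm} combined with the Weyl bounds of Lemma~\ref{lem:weyl_bounds} to dominate the differentiated series and conclude smoothing for $t>0$. You merely spell out details (Parseval, the energy-based uniqueness, the Weierstrass $M$-test bookkeeping) that the paper leaves implicit.
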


\begin{proof}
The expansion \eqref{eq:damped_heat_series} follows by separation of variables in the $\{\omega_k\}$ basis \citep{brezis_functional_2011}.
Uniform convergence of derivatives on $[\varepsilon,\infty)$ follows from Lemma~\ref{lem:eig_derivative_supnorm} and Weyl bounds
(Lemma~\ref{lem:weyl_bounds}), since $e^{-d_D\lambda_k\varepsilon}$ dominates any polynomial growth in $\lambda_k$.
\end{proof}

\section{From the diffusive signal equation to an effective closure}
\label{app:closure}

This appendix formalizes how a diffusive chemoattractant equation can be eliminated mode-by-mode, yielding a reduced
closure that modifies the effective mobility of $(S,R)$ and leads to the cross-diffusion structure used in the
mode-wise instability criteria.

\subsection{A linearized two-way PDE--PDE system around a homogeneous equilibrium}
\label{app:closure_linearization}

Consider the two-way chemotaxis extension with a diffusive signal
\begin{align}
\label{eq:app_twoway_c_pde}
\begin{cases}
\partial_t S = d_S\Delta S - \chi_S\nabla\!\cdot(S\nabla c) + f_S(S,R),\\[2pt]
\partial_t R = d_R\Delta R - \chi_R\nabla\!\cdot(R\nabla c) + f_R(S,R),\\[2pt]
\partial_t c = d_c\Delta c + q\,c + h(S,R),\qquad q<0,
\end{cases}
\end{align}
posed on $U$ with homogeneous Neumann boundary conditions for $S,R,c$.
Let $(S^*,R^*,c^*)$ be a spatially homogeneous equilibrium satisfying
\[
f_S(S^*,R^*)=0,\qquad f_R(S^*,R^*)=0,\qquad q\,c^*+h(S^*,R^*)=0.
\]
Write perturbations $\tilde S=S-S^*$, $\tilde R=R-R^*$, $\tilde c=c-c^*$ and denote the Jacobians
\[
\mathcal{J}_F \coloneqq DF(S^*,R^*)=
\begin{pmatrix}
a & b\\ c & d
\end{pmatrix},
\qquad
\mathcal{J}_h \coloneqq \nabla h(S^*,R^*)=(h_S^*,h_R^*).
\]
Since $\nabla c^*=0$, linearization of the chemotaxis terms yields
\[
-\chi_S\nabla\cdot(S\nabla c) \approx -\chi_S\nabla\cdot(S^*\nabla \tilde c)= -\chi_S S^*\Delta \tilde c,
\qquad
-\chi_R\nabla\cdot(R\nabla c) \approx -\chi_R R^*\Delta \tilde c.
\]
Hence the linearized system is
\begin{align}
\label{eq:app_linearized_system}
\begin{cases}
\partial_t \tilde S = d_S\Delta \tilde S - \chi_S S^*\Delta \tilde c + a\,\tilde S + b\,\tilde R,\\[2pt]
\partial_t \tilde R = d_R\Delta \tilde R - \chi_R R^*\Delta \tilde c + c\,\tilde S + d\,\tilde R,\\[2pt]
\partial_t \tilde c = d_c\Delta \tilde c + q\,\tilde c + h_S^*\tilde S + h_R^*\tilde R.
\end{cases}
\end{align}

\subsection{Mode-wise decomposition and Schur complement elimination}
\label{app:closure_schur}

Let $\{(\lambda_k,\omega_k)\}_{k\ge1}$ be the Neumann Laplacian eigenpairs on $U$:
\[
-\Delta \omega_k=\lambda_k\omega_k,\qquad \partial_{\mathrm n}\omega_k=0\ \text{on }\partial U,\qquad
0=\lambda_1<\lambda_2\le\cdots,\quad \lambda_k\to\infty.
\]
Project \eqref{eq:app_linearized_system} onto a fixed mode $k\ge2$ and write
\[
\tilde S(x,t)=\hat s_k(t)\,\omega_k(x),\quad
\tilde R(x,t)=\hat r_k(t)\,\omega_k(x),\quad
\tilde c(x,t)=\hat c_k(t)\,\omega_k(x).
\]
Using $\Delta \omega_k=-\lambda_k\omega_k$, we obtain the $3\times3$ linear ODE system
\begin{equation}
\label{eq:app_mode_3by3}
\frac{d}{dt}
\begin{pmatrix}\hat s_k\\[4pt]
\hat r_k\\[4pt]
\hat c_k\end{pmatrix}
=
\underbrace{\begin{pmatrix}
a-d_S\lambda_k & b & \chi_S S^*\lambda_k\\[4pt]
c & d-d_R\lambda_k & \chi_R R^*\lambda_k\\[4pt]
h_S^* & h_R^* & q-d_c\lambda_k
\end{pmatrix}}_{\eqqcolon\,\mathcal{M}(\lambda_k)}
\begin{pmatrix}\hat s_k\\[4pt]
\hat r_k\\[4pt]
\hat c_k\end{pmatrix}.
\end{equation}
For spectral analysis, one may eliminate $\hat c_k$ by a Schur complement argument.
Let $\mu\in\mathbb{C}$ be a candidate growth rate for mode $k$ and consider the eigenvalue problem
\[
\mu \mathbf{z}=\mathcal{M}(\lambda_k)\mathbf{z},\qquad \mathbf{z}=(\hat s_k,\hat r_k,\hat c_k)^{\mathsf T}.
\]
Assuming $\mu\neq q-d_c\lambda_k$ (the signal eigenvalue of the uncoupled $c$-block), the third row gives
\begin{equation}
\label{eq:app_ck_eliminate}
(\mu-(q-d_c\lambda_k))\,\hat c_k = h_S^*\,\hat s_k + h_R^*\,\hat r_k,
\qquad\Longrightarrow\qquad
\hat c_k = \frac{h_S^*\,\hat s_k + h_R^*\,\hat r_k}{\mu-(q-d_c\lambda_k)}.
\end{equation}
Substituting \eqref{eq:app_ck_eliminate} into the first two rows yields an equivalent $2\times2$ eigenvalue problem
\begin{equation}
\label{eq:app_schur_effective_2by2}
\mu
\begin{pmatrix}\hat s_k\\ \hat r_k\end{pmatrix}
=
\mathcal{A}_{\mathrm{eff}}(\mu;\lambda_k)
\begin{pmatrix}\hat s_k\\ \hat r_k\end{pmatrix},
\end{equation}
where the effective mode matrix is
\begin{equation}
\label{eq:app_Aeff_mu}
\mathcal{A}_{\mathrm{eff}}(\mu;\lambda_k)
\coloneqq
\Bigl(\mathcal{J}_F-\lambda_k \mathcal{D}\Bigr)
+
\frac{\lambda_k}{\mu-(q-d_c\lambda_k)}\,
\begin{pmatrix}
\chi_S S^*\\ \chi_R R^*
\end{pmatrix}
\begin{pmatrix}
h_S^* & h_R^*
\end{pmatrix},
\qquad
\mathcal{D}\coloneqq \mathrm{diag}(d_S,d_R).
\end{equation}
Equivalently, the characteristic equation of $\mathcal{M}(\lambda_k)$ factors as
\begin{equation}
\label{eq:app_char_factor}
\det(\mu I-\mathcal{M}(\lambda_k))
=
\bigl(\mu-(q-d_c\lambda_k)\bigr)\,
\det\!\Bigl(\mu I-\mathcal{A}_{\mathrm{eff}}(\mu;\lambda_k)\Bigr).
\end{equation}
Thus, for each fixed $\lambda_k>0$, spectral stability reduces to understanding the nonlinear (in $\mu$) $2\times2$ determinant
in \eqref{eq:app_char_factor}. The next subsection explains when \eqref{eq:app_Aeff_mu} simplifies to the reduced closure
used in the main text.

\subsection{Fast-relaxation regime and reduced closure}
\label{app:closure_fast_relaxation}

A standard fast-relaxation scaling introduces $\varepsilon\ll 1$ in the signal equation:
\begin{equation}
\label{eq:app_fast_relax}
\varepsilon\,\partial_t c = d_c\Delta c + q\,c + h(S,R),\qquad q<0.
\end{equation}
Linearizing \eqref{eq:app_fast_relax} replaces the third diagonal entry $q-d_c\lambda_k$ in \eqref{eq:app_mode_3by3} by
$\varepsilon^{-1}(q-d_c\lambda_k)$, hence \eqref{eq:app_ck_eliminate} becomes
\begin{equation}
\label{eq:app_ck_fast}
\hat c_k = \frac{h_S^*\,\hat s_k + h_R^*\,\hat r_k}{\varepsilon\mu-(q-d_c\lambda_k)}.
\end{equation}
If $\varepsilon|\mu|\ll |q-d_c\lambda_k|$ (signal relaxes faster than the growth/decay rate of the $(S,R)$-mode),
then
\begin{equation}
\label{eq:app_ck_qss}
\hat c_k \;=\; -\frac{h_S^*\,\hat s_k + h_R^*\,\hat r_k}{q-d_c\lambda_k}\;+\;\mathcal{O}(\varepsilon),
\qquad \varepsilon\to0,
\end{equation}
and the effective matrix \eqref{eq:app_Aeff_mu} admits the approximation
\begin{equation}
\label{eq:app_Aeff_qss}
\mathcal{A}_{\mathrm{eff}}(\mu;\lambda_k)
=
\Bigl(\mathcal{J}_F-\lambda_k \mathcal{D}\Bigr)
-\frac{\lambda_k}{q-d_c\lambda_k}\,
\begin{pmatrix}
\chi_S S^*\\ \chi_R R^*
\end{pmatrix}
\begin{pmatrix}
h_S^* & h_R^*
\end{pmatrix}
\;+\;\mathcal{O}(\varepsilon).
\end{equation}
In particular, for each fixed mode $\lambda_k>0$, the quasi-steady elimination yields a linear $2\times2$ mode matrix
(with no dependence on $\mu$ at leading order). This provides a rigorous modal interpretation of the reduced closure.

\subsubsection{Connection to a local closure $c\approx g(S,R)$.}
The quasi-steady relation in physical space reads
\begin{equation}
\label{eq:app_qss_operator}
0 = d_c\Delta c + q\,c + h(S,R)
\qquad\Longrightarrow\qquad
c = -\,(d_c\Delta+q)^{-1} h(S,R),
\end{equation}
with Neumann boundary conditions. This is a nonlocal closure in general.
However, in regimes where either
(i) $d_c$ is small compared to $|q|$ (reaction-dominated signal), or
(ii) one focuses on sufficiently short spatial scales for which $(q-d_c\lambda_k)^{-1}$ is approximately constant across the
unstable band, or
(iii) one adopts a model reduction retaining only a finite set of dominant modes,
\eqref{eq:app_qss_operator} can be approximated by a local closure $c\approx g(S,R)$ near equilibrium, namely
\begin{equation}
\label{eq:app_local_g}
c \approx g(S,R)
\quad\text{with}\quad
\nabla g(S^*,R^*) = -\frac{1}{q}\,\nabla h(S^*,R^*)\;=\;-\frac{1}{q}\,(h_S^*,h_R^*).
\end{equation}
Under \eqref{eq:app_local_g}, linearization of the reduced chemotaxis term
$-\chi\nabla\cdot(S\nabla g(S,R))$ produces precisely the rank-one correction to the diffusion matrix encoded by the outer product
\[
\begin{pmatrix}\chi_S S^*\\ \chi_R R^*\end{pmatrix}
\begin{pmatrix}g_S^* & g_R^*\end{pmatrix},
\qquad
(g_S^*,g_R^*)=\nabla g(S^*,R^*),
\]
which is the effective cross-diffusion structure used in Theorem~\ref{thm:twoway_criteria}.

\begin{remark}[When the reduced cross-diffusion form is accurate]
The reduction to a local closure $c\approx g(S,R)$ is most accurate when the signal equation is strongly damped ($|q|$ large)
and the feedback $h(S,R)$ varies on spatial scales not significantly affected by the resolvent $(d_c\Delta+q)^{-1}$, or when the
analysis targets an instability band where $(q-d_c\lambda_k)^{-1}$ may be treated as approximately constant.
If one wishes to avoid any locality assumption, the mode-wise elimination \eqref{eq:app_char_factor}--\eqref{eq:app_Aeff_qss}
already provides a fully rigorous route: for each $\lambda_k$ the $(S,R)$-mode matrix is modified by the scalar factor
$-(q-d_c\lambda_k)^{-1}$ multiplying the rank-one feedback operator.
\end{remark}

\section*{Data Availability:}
Data sharing is not applicable to this article, as no new data was created or analyzed in this study.

\section*{Author Contribution:}

All authors have accepted responsibility for the entire content of this manuscript and approved its submission.


\section*{Declaration:}
All authors declare no competing interests.

\bibliography{reference}

\end{document}